\newtheorem{theorem}{Theorem}[section]
\newtheorem{corollary}[theorem]{Corollary}
\newtheorem{proposition}[theorem]{Proposition}
\newtheorem{conjecture}[theorem]{Conjecture}
\newtheorem{exo}[theorem]{Exercise} 
\newtheorem*{answer}{Answer}        
\newtheorem{example}[theorem]{Example}
\theoremstyle{definition}
\newtheorem{remark}[theorem]{Remark}
\newcommand{\E}{\mathcal{E}}
\newcommand{\Z}{\mathbb{Z}}
\newcommand{\Pro}{\mathbb{P}}
\newcommand{\N}{\mathbb{N}}
\newcommand{\C}{\mathbb{C}}
\newcommand{\RR}{\mathbb{R}}
\newcommand{\F}{\mathbb{F}}
\newcommand{\R}{\mathcal{R}}
\newcommand{\ii}{\mathbf{i}}
\newcommand{\jj}{\mathbf{j}}
\newcommand{\ab}{\mathbf{a}}
\newcommand{\dd}{\mathbf{d}}
\newcommand{\gf}{\mathfrak{g}}
\newcommand{\nn}{\mathfrak{n}}
\newcommand{\la}{\lambda}
\newcommand{\De}{\Delta}
\newcommand{\CC}{\mathcal{C}}
\newcommand{\QQ}{\mathcal{Q}}
\newcommand{\Ss}{\mathcal{S}}
\newcommand{\BB}{\mathcal{B}}
\newcommand{\La}{\Lambda}
\newcommand{\Sub}{\operatorname{Sub}}
\newcommand{\Hom}{\operatorname{Hom}}
\newcommand{\Ext}{\operatorname{Ext}}
\newcommand{\md}{\operatorname{mod}}
\newcommand{\Gr}{\operatorname{Gr}}
\newcommand{\SL}{\operatorname{SL}}
\newcommand{\pr}{\operatorname{pr}}
\newcommand{\add}{\operatorname{add}}
\newcommand{\yy}{\mathbf{y}}
\title[Preprojective algebras and cluster algebras]{Preprojective algebras and cluster algebras}
\author[C. Geiss, B. Leclerc and J. Schr\"oer]{Christof Geiss, Bernard
  Leclerc and Jan Schr\"oer\footnote{We are grateful to A. Skowro\'nski 
and the ICRA committee for encouraging us to prepare this survey for the 
ICRA XII book.}}
\begin{document}

\begin{abstract}
We use the representation theory of preprojective algebras to
construct and study certain cluster algebras related to 
semisimple algebraic groups.
\end{abstract}

\begin{classification}
Primary 16G20; Secondary 14M15, 16D90, 16G70, 17B10, 20G05, 20G20, 20G42.
\end{classification}

\begin{keywords}
preprojective algebra, cluster algebra, flag variety, rigid module,
mutation, Frobenius category, semicanonical basis 
\end{keywords}

\maketitle

\section{Introduction}

Cluster algebras were invented by Fomin and Zelevinsky in 2001 \cite{FZ1}.
One of the main motivations for introducing this new class of
commutative algebras was to provide a combinatorial and algebraic framework
for studying the canonical bases of quantum groups introduced
by Lusztig and Kashiwara \cite{L1,K}
and the notion of total positivity for semisimple algebraic groups
developed by Lusztig \cite{L2}. 

A first attempt to understand cluster algebras 
in terms of the representation theory of quivers was done by
Marsh, Reineke, and Zelevinsky \cite{MRZ}, using a category of 
decorated representations. 
This was quickly followed by the seminal paper of Buan, Marsh, Reiten,
Reineke, and Todorov \cite{BMRRT} who introduced a new family of
triangulated categories attached to hereditary algebras, 
called cluster categories, and  
showed that the combinatorics of cluster mutations arises
in the tilting theory of these cluster categories.
This yielded a categorification of a large family
of cluster algebras: the acyclic cluster algebras.

In this review, we will explain a different but somewhat parallel
development aimed at giving a representation-theoretic treatment
for another class of cluster algebras, namely those discovered by
Berenstein, Fomin and Zelevinsky \cite{BFZ} in relation with their series of
works on total positivity and the geometry of double Bruhat cells
in semisimple groups. 
Instead of the cluster categories we have used the module categories
of the preprojective algebras corresponding to these semisimple
groups, and more generally certain Frobenius subcategories of
these module categories. This allowed us to prove that the cluster
monomials of the cluster algebras we consider belong to the dual
of Lusztig's semicanonical basis, and in particular are linearly
independent \cite{GLSInv}. It also enabled us to introduce new
cluster algebra structures on the coordinate rings of partial
flag varieties \cite{GLSFour}.

\section{Total positivity, canonical bases and cluster algebras}
\label{sect2}
Before reviewing our construction, we would like to illustrate by means
of some simple examples why total positivity, canonical bases and
cluster algebras are intimately connected. 

Lusztig has defined the totally positive part $X_{>0}$
for several classes of complex algebraic varieties $X$ attached to 
a semisimple algebraic group $G$.
The definition uses the theory of canonical bases for irreducible
$G$-modules.
It is not our intention in this survey to explain this definition,
neither to recall the construction of canonical bases
(for excellent reviews of these topics, we refer the reader to
\cite{LI1,LI2}).  
Instead of this, we shall present a few examples
for which both the totally positive varieties and the canonical
bases can be described in an explicit and elementary way. 
We shall then see that a cluster algebra structure on the coordinate
ring naturally arises from this description.  

Our first example is trivial, but nevertheless useful to get started.
Let $V=\C^{2n}$ be an even dimensional vector space
with natural coordinates $(y_1,\ldots, y_{2n})$.
Let $X=\Pro(V)=\Pro^{2n-1}$ be the corresponding projective space.
In this case our group $G$ is $SL(V)$.
The totally positive part of $V$ is simply the orthant
\[
V_{>0} = \{v\in V \mid y_1(v) > 0, \ldots, y_{2n}(v)>0 \},
\] 
and the totally positive part of $X$ is the subset of $X$ consisting of
points having a system of homogeneous coordinates
$(y_1:\cdots:y_{2n})$ with all $y_i$ positive.
The coordinate ring of $V$ (or the homogeneous coordinate
ring of $X$) is $R=\C[y_1,\ldots,y_{2n}]$. 
It has a natural $\C$-basis given by all monomials in the $y_i$'s.
The natural action of $G$ on $V$ makes $R$ into a linear representation
of $G$, which decomposes into irreducible representations as
\[
R=\bigoplus_{k\ge 0} R_k,
\]
where $R_k\cong S^k(V^*)$ is the degree $k$ homogeneous part of $R$. 
For every irreducible representation of $G$, Lusztig and Kashiwara
have introduced a canonical basis and a dual canonical basis
(also called lower global basis and upper global basis by Kashiwara).
It is not difficult to check that, in the case of the simple
representations $R_k$, the dual canonical basis coincides with
the basis of monomials in the $y_i$'s of total degree $k$.

We now pass to a more interesting example.
Consider the nondegenerate quadratic form on $V$ given by
\[
q(y_1,\ldots,y_{2n}) = \sum_{i=1}^n (-1)^{i-1} y_i\,y_{2n+1-i}.
\]
Let 
\[
\CC=\{v\in V \mid q(v) = 0\}
\]
be its isotropic cone and $\QQ = \Pro(\CC)$ the corresponding
smooth quadric in $X=\Pro(V)$. The quadric
$\QQ$~can be seen as a partial flag variety for the special orthogonal
group $H$ attached to the form $q$, and so it has, following Lusztig \cite{L3},
a well-defined totally positive part. 
Let us try to guess what is $\QQ_{>0}$, or equivalently what is $\CC_{>0}$.

It seems natural to require that $\CC_{>0}$ is contained in $V_{>0}$.
But this is not enough, and in general $\CC_{>0}$ is going to be 
a proper subset of $\CC \cap V_{>0}$. 
To see this, we may use the known fact that the totally positive
part of a variety of dimension $k$ is homeomorphic to 
$\RR_{>0}^k$, hence we might expect that it is described
by a system of $k$ inequalities.
However, $\CC \cap V_{>0}$ is the subset of $\CC$ given 
by the $2n$ inequalities $y_i>0$,
and since $\CC$ has dimension $2n-1$, we would like to 
define $\CC_{>0}$ by a system of only $2n-1$ inequalities.
In other words, there should exist a system of $2n-1$ 
functions $(f_1, \ldots, f_{2n-1})$ on $\CC$ such that 
\begin{equation}\label{eq1}
\CC_{>0} = \{v\in \CC \mid f_1(v) > 0, \ldots, f_{2n-1}(v)>0 \}.
\end{equation}
Such a system $(f_1, \ldots, f_{2n-1})$ is called
a \emph{positive coordinate system}.
 
So we are looking for a ``natural'' set of $2n-1$ functions
$(f_1, \ldots, f_{2n-1})$ such that the positivity of 
$f_1(v), \ldots, f_{2n-1}(v)$ implies the positivity of
$y_1(v), \ldots, y_{2n}(v)$.
Let us try this idea in the case $n=3$. 
We have 
\[
q(y_1,\ldots,y_6)= y_1y_6 - y_2y_5 + y_3y_4.
\] 
On $\CC \cap V_{>0}$ we therefore have the relation
\[
y_5 = \frac{y_1y_6 + y_3y_4}{y_2}.
\] 
Hence the positivity of the 5 coordinates in the right-hand side 
implies the positivity of $y_5$, that is, the defining equation of
$\CC$ allows to eliminate the inequality $y_5>0$ from the 6 
defining inequalities of $\CC \cap V_{>0}$. So we could take 
\[
(y_1, y_6, y_3, y_4, y_2) 
\]
as a positive coordinate system.
Note that $y_2$ and $y_5$ play the same role and are exchangeable:
we could also take 
$(y_1, y_6, y_3, y_4, y_5)$. 
This would define the same subset $\CC_{>0}$, which in this case
is simply $\CC \cap V_{>0}$.

Already in the case $n=4$ the same trick no longer works. 
Indeed, the defining equation of $\CC$ is now 
$y_1y_8 - y_2y_7 + y_3y_6 -y_4y_5 =0$, 
which does not allow us to express any of the $y_i$'s as a 
subtraction-free expression in terms of the 7 remaining ones.
To overcome this problem, we introduce a new quadratic
function 
\[
p= y_3y_6 -y_4y_5 = y_2y_7 - y_1y_8
\]
on $\CC$. 
On $\CC \cap V_{>0}$ we then have
\[
y_7 = \frac{y_1y_8 + p}{y_2},\quad y_6 = \frac{y_4y_5 + p}{y_3},
\]
and this leads us to take  
\begin{equation}\label{eq2}
(y_1, y_8, y_4, y_5, p, y_2, y_3) 
\end{equation}
as a positive coordinate system.
Again, $y_2$ and $y_7$ are exchangeable, as are
$y_3$ and~$y_6$. So we would obtain the same
subset $\CC_{>0}$ by using, instead of (\ref{eq2}), 
each of the 3 alternative systems of coordinates
\begin{equation}\label{eq3}
(y_1, y_8, y_4, y_5, p, y_7, y_3),\quad
(y_1, y_8, y_4, y_5, p, y_2, y_6),\quad 
(y_1, y_8, y_4, y_5, p, y_7, y_6).
\end{equation}
Note that in this case our candidate for $\CC_{>0}$ is a proper subset 
of $\CC \cap V_{>0}$, since the positivity of $p$ does not follow from the 
positivity of the $y_i$'s.

It turns out that this naive candidate coincides
with the totally positive part of $\CC$ defined by Lusztig.
To explain this, let us consider the coordinate ring
\[
A = \C[y_1,\ldots,y_8]/(y_1y_8 - y_2y_7 + y_3y_6 -y_4y_5)
\]
of $\CC$, or in other words the homogeneous coordinate ring of
the quadric $\QQ$. As before, $A$ is in a natural way a representation
of the special orthogonal group $H$, and the homogeneous components
$A_k\ (k\ge 0)$ coincide with the irreducible direct summands of
this representation. Hence by putting together the dual canonical bases
of all summands $A_k$, we get a dual canonical basis of $A$.
We claim that in this easy situation, the dual canonical basis
can be explicitly computed and has the following 
simple description. 
Namely,
the dual canonical basis of $A_k$ consists of
all the degree $k$ monomials in 
$y_1, \ldots, y_8, p$ containing only variables of \emph{one} of
the 4 coordinate systems displayed in (\ref{eq2}), (\ref{eq3}). 
Here, $y_1,\ldots, y_8$ have degree 1 and $p$ has degree 2.

For example, the dual canonical
basis of $A_1 \cong V^*$ is 
$\{y_1, y_2, y_3, y_4, y_5, y_6, y_7, y_8\}$,
and the dual canonical basis of $A_2$ consists of $p$ and
of all the degree 2 monomials in the $y_i$'s except $y_2y_7$
and $y_3y_6$. 

Now, Lusztig has shown \cite[Prop.~3.2, Th.~3.4]{L3} that $\CC_{>0}$
has the following characterization: it consists of all
elements $v$ of $\C^{2n}$ such that, for every element $b$ of the dual canonical
basis of $A_k$ and for every $k$, one has $b(v) > 0$.  
Because of the monomial description of the dual canonical basis,
we see that this agrees with our naive definition of
$\CC_{>0}$.

\begin{exo}
{\rm
Guess in a similar way what is the definition of $\CC_{>0}$ for $n\ge 4$.
}
\end{exo}

\begin{answer}
{\rm
For $s=1, 2, \ldots, n-3$, put 
\[
p_s= \sum_{k=1}^{s+1}(-1)^{s+1-k}y_ky_{2n+1-k}.
\]
Then
$\CC_{>0}$ is the subset of $\CC$ given by the following 
$n+1$ inequalities
\[
y_1>0,\quad y_n>0, \quad y_{n+1}>0,\quad y_{2n}>0,\quad
p_s >0,\ (s=1,\ldots, n-3),
\]
together with \emph{one} (it does not matter which one)
of the two inequalities
\[
y_k>0,\qquad y_{2n+1-k}>0,
\]
for each $k=2, 3, \ldots, n-1$.
}
\end{answer}

Thus for every $n$, $\CC_{>0}$ can be described as in (\ref{eq1}) 
by a positive coordinate system, and there are $2^{n-2}$ different but
equivalent such systems. 
In fact, one can also check that the dual canonical basis of
the coordinate ring of $\CC$ consists of all monomials in the
$y_i$'s and $p_s$'s supported on \emph{one} of these $2^{n-2}$
coordinate systems.

The definition of a cluster algebra will be recalled 
in Section~\ref{newsect} below.
A reader already familiar with it will
immediately recognize an obvious cluster algebra
structure on the coordinate ring of $\CC$ emerging from this
discussion.
Its $2(n-2)$ \emph{cluster variables} are 
\[
y_k,\quad y_{2n+1-k},
\qquad (k=2, 3, \ldots, n-1). 
\]
Its \emph{coefficient ring} is generated by 
\[
y_1, y_n, y_{n+1},y_{2n},
p_s,\quad (s=1,\ldots, n-3).
\]
Its \emph{clusters} are the $2^{n-2}$ possible choices of $2n-1$
of these functions forming a positive coordinate system. 
Its \emph{cluster monomials} are 
all the monomials supported on a single cluster, and 
its \emph{exchange relations} are 
\[
y_ky_{2n+1-k} = 
\left\{
\begin{array}{ll}
p_1+y_1y_{2n} & \mbox{if $k=2$,}\\[2mm]
p_{k-1}+p_{k-2} & \mbox{if $3\le k \le n-2$,}\\[2mm]
y_ny_{n+1}+p_{n-3} & \mbox{if $k=n-1$.}
\end{array}
\right.
\]
This is a cluster algebra of finite type (it has finitely many
cluster variables). Its type, according to Fomin and Zelevinsky's
classification \cite{FZ2} is $A_1^{n-2}$.

To summarize, the cone $\CC$ and the corresponding quadric $\QQ$
are examples of algebraic varieties for which Lusztig has described
a natural totally positive subset $\CC_{>0}$ or $\QQ_{>0}$. 
What we have found is that their coordinate ring is endowed 
with the structure of a cluster algebra such that

\begin{quote}
(1) each cluster gives rise to a positive coordinate system;  

(2) the dual canonical basis of the coordinate ring coincides with
the set of cluster monomials.
\end{quote}

This is the prototype of what one would like to do
for each variety $X$ having a totally positive part $X_{>0}$
in Lusztig's sense.
But in general, things become more complicated.
First, the cluster algebra structure, when it is known, 
is usually well-hidden, and its description requires a lot of difficult (but
beautiful) combinatorics. As an example, one may consult the paper 
of Scott \cite{S} and in particular the cluster structures of the Grassmannians
$\Gr(3,6)$, $\Gr(3,7)$ and $\Gr(3,8)$.   
Secondly, these cluster algebras are generally of infinite type
so one cannot hope for a closed and finite description as 
in the above example. This is not too bad if one is mainly interested
in total positivity, since one may not necessarily need to know
\emph{all} positive coordinate systems on $X$. 
But it becomes a challenging issue if one aims at a monomial description of 
the dual canonical basis of the coordinate ring, because that would 
likely involve infinitely many families of monomials.
In fact such a monomial description may not even be possible, since,
as shown in \cite{Le}, there may exist elements of the dual
canonical basis whose square does not belong to the basis.
In any case, even if the cluster structure is known, more work
is certainly needed to obtain from it a full description of the dual canonical
basis. Finally, there is no universal recipe for getting a cluster
structure on the coordinate ring. Actually, the existence
of such a structure is not guaranteed by any general theorem, so
it often seems kind of miraculous when it eventually comes out
of some complicated calculations.

The aim of this review is to explain some recent progress made in these
directions by means of the representation theory of preprojective
algebras.
We will choose as our main example the partial flag varieties $X$ attached
to a simple algebraic group $G$ of type $A, D, E$. 
Thus $X$ is a homogeneous space $G/P$, where $P$ is 
a parabolic subgroup of $G$. To $G$ one can attach the preprojective
algebra $\La$ with the same Dynkin type. To $P$ (or rather to its 
conjugacy class) one can attach a certain injective $\La$-module $Q$,
and the subcategory $\Sub Q$ of the module category $\md\La$ 
cogenerated by $Q$.
We will show that $\Sub Q$ can be regarded as a categorification
of the multihomogeneous coordinate ring of $X$, and that the rigid
modules in $\Sub Q$ give rise to a cluster structure on this ring.
In particular, this yields a uniform recipe for producing explicit
cluster structures, many of which were first 
discovered in this way. The cluster structure is
of finite type when $\Sub Q$ has finite representation type, and in
these exceptional cases, the Auslander-Reiten quiver of $Q$ is quite
helpful for understanding the ensuing combinatorics. Finally, this approach
allows to show that the cluster monomials of these algebras belong
to the dual of Lusztig's semicanonical basis. Unfortunately, 
the relation between the semicanonical and the canonical basis is a 
subtle question (see \cite{GLSAnnENS}).
Nevertheless, as predicted by the general conjecture
of Fomin and Zelevinsky \cite[p.498]{FZ1}, we believe that the
cluster monomials also belong to the dual canonical basis, that is,
we conjecture that they lie in the intersection of the dual canonical
basis and the dual semicanonical basis (see below \S\ref{sect18}).

\section{Preprojective algebras}

We start with definitions and basic results about preprojective
algebras of Dynkin type.

Let $\Delta$ be a Dynkin diagram of type $A$, $D$ or $E$.
We denote by $I$ the set of vertices and by $n$ its cardinality.
Let $\overline{Q}$ be the quiver obtained from $\Delta$ 
by replacing each edge, between $i$ and $j$ say, 
by a pair of opposite arrows $a : i\longrightarrow j$ and
$a^* : j \longrightarrow i$.
Let $\C\overline{Q}$ denote the path algebra of $\overline{Q}$
over $\C$. We can form the following quadratic element in 
$\C\overline{Q}$,
\[
c= \sum (a^*a - aa^*),
\]
where the sum is over all edges of $\Delta$.
Let $(c)$ be the two-sided ideal generated by~$c$.
Following Gelfand and Ponomarev \cite{GP}, 
we define the \emph{preprojective algebra}
\[
\La := \C\overline{Q}/(c).
\]
It is well-known that $\La$ is a finite-dimensional selfinjective
algebra.
It has finite representation type 
if $\Delta$ has type $A_n\ (n\le 4)$, 
tame type if $\Delta$ has type $A_5$ or $D_4$,
and wild type in all other cases (see \cite{DR}).

We denote by $S_i (i\in I)$ the simple $\La$-modules, and by
$Q_i (i\in I)$ their injective envelopes.
\begin{example}\label{exa31}
{\rm Let $\La$ be of type $D_4$.
We label the 3 external nodes of the Dynkin diagram of type $D_4$
by 1, 2, 4, and the central node by 3. 
With this convention, the socle filtration of $Q_4$ is 
$$
\begin{tabular}{c}
$S_4$\\
\hline
$S_3$\\
\hline
$S_1 \oplus S_2$\\
\hline
$S_3$\\
\hline
$S_4$
\end{tabular}
$$
and the socle filtration of $Q_3$ is
$$
\begin{tabular}{c}
$S_3$\\
\hline
$S_1 \oplus S_2 \oplus S_4$\\
\hline
$S_3 \oplus S_3$\\
\hline
$S_1 \oplus S_2 \oplus S_4$\\
\hline
$S_3$
\end{tabular}
$$
The structure of $Q_1$ and $Q_2$ can be obtained from
that of $Q_4$ by applying the order 3 diagram automorphism
$1 \mapsto 2 \mapsto 4 \mapsto 1$. 
}
\end{example}

One important property of the module category $\md\La$ is
the following $\Ext$-symmetry.
Let $D$ denote duality with respect to the field $\C$.
We have 
\begin{equation}
\Ext^1_\La(M,N) \cong D\Ext^1_\La(N,M), \qquad (M,N\in \md\La),
\end{equation}
and this isomorphism is functorial with respect to $M$ and $N$
(see \cite{GLSComp}).

\section{Regular functions on maximal unipotent subgroups}

We turn now to semisimple algebraic groups.
For unexplained terminology, the reader can consult standard
references, \emph{e.g.} \cite{CSM}, \cite{FH}, \cite{Hu}.

Let $G$ be a simply connected simple complex algebraic group
with the same Dynkin diagram $\Delta$ as $\La$. 
Let $N$ be a fixed maximal unipotent subgroup of $G$.
If $G=\SL(n+1,\C)$, we can take $N$ to be the subgroup
of upper unitriangular matrices.
In general $N$ is less easy to describe.
To perform concrete calculations, one can use the one-parameter
subgroups $x_i(t)\ (i\in I,\, t\in \C)$ associated with the simple
roots, which form a distinguished set of generators of $N$.

\begin{example}{\rm
In type $A_n$, if $N$ is the subgroup
of upper unitriangular matrices of $\SL(n+1,\C)$, 
we have $x_i(t) = I + tE_{i,i+1}$, where $I$ is the
identity matrix and $E_{ij}$ the matrix unit with
a unique nonzero entry equal to $1$ in row $i$ and
column $j$.}
\end{example}

\begin{example}\label{exa42}
{\rm
In type $D_n$, $N$ can be identified with the subgroup
of the group 
of upper unitriangular matrices of $\SL(2n,\C)$,
generated by
\[ 
x_i(t) = \left\{ \begin{array}{ll}
I + t(E_{n-i+1,n-i+2}+E_{n+i-1,n+i}) &\text{if } 2\le i\le n,\\
I+t(E_{n-1,n+1}+E_{n,n+2}) &\text{if } i=1.
\end{array}\right.  
\]
Thus in type $D_4$ we can take for $N$ the subgroup of $\SL(8,\C)$
generated by 
\[
x_1(t)=
\begin{pmatrix}
1&0&0&0&0&0&0&0\cr
0&1&0&0&0&0&0&0\cr
0&0&1&0&t&0&0&0\cr
0&0&0&1&0&t&0&0\cr
0&0&0&0&1&0&0&0\cr
0&0&0&0&0&1&0&0\cr
0&0&0&0&0&0&1&0\cr
0&0&0&0&0&0&0&1
\end{pmatrix},\quad
x_2(t)=
\begin{pmatrix}
1&0&0&0&0&0&0&0\cr
0&1&0&0&0&0&0&0\cr
0&0&1&t&0&0&0&0\cr
0&0&0&1&0&0&0&0\cr
0&0&0&0&1&t&0&0\cr
0&0&0&0&0&1&0&0\cr
0&0&0&0&0&0&1&0\cr
0&0&0&0&0&0&0&1
\end{pmatrix},
\]
\[
x_3(t)=
\begin{pmatrix}
1&0&0&0&0&0&0&0\cr
0&1&t&0&0&0&0&0\cr
0&0&1&0&0&0&0&0\cr
0&0&0&1&0&0&0&0\cr
0&0&0&0&1&0&0&0\cr
0&0&0&0&0&1&t&0\cr
0&0&0&0&0&0&1&0\cr
0&0&0&0&0&0&0&1
\end{pmatrix},\quad
x_4(t)=
\begin{pmatrix}
1&t&0&0&0&0&0&0\cr
0&1&0&0&0&0&0&0\cr
0&0&1&0&0&0&0&0\cr
0&0&0&1&0&0&0&0\cr
0&0&0&0&1&0&0&0\cr
0&0&0&0&0&1&0&0\cr
0&0&0&0&0&0&1&t\cr
0&0&0&0&0&0&0&1
\end{pmatrix}.
\]
}
\end{example}

As an algebraic variety, $N$ is isomorphic
to an affine space of complex dimension the number $r$ of positive 
roots of $\Delta$. 
Hence its coordinate ring $\C[N]$ 
is isomorphic to a polynomial ring in $r$ variables.
For example in type $A_n$ if $N$ is the group of unitriangular
matrices, each matrix entry $n_{ij}\ (1\le i<j \le n+1)$ is  
a regular function on $N$ and $\C[N]$ is the ring of polynomials 
in the $n(n+1)/2$ variables $n_{ij}$.

In the general case, the most convenient way of specifying a
regular function $f\in\C[N]$ is to describe its evaluation
$f(x_{i_1}(t_1)\cdots x_{i_k}(t_k))$ at an arbitrary product of 
elements of the one-parameter subgroups.
In fact one can restrict to certain special products.
Namely, let $W$ denote the Weyl group of $G$ and $s_i \ (i\in I)$
its Coxeter generators. Let $w_0$ be the longest element of $W$
and let $w_0 = s_{i_1}\cdots s_{i_r}$ be a reduced decomposition.
Then it is well-known that the image of the map
\[
(t_1,\ldots,t_r)\in\C^r \mapsto x_{i_1}(t_1)\cdots x_{i_r}(t_r)\in N
\]
is a dense subset of $N$. It follows that a polynomial function $f\in\C[N]$
is completely determined by its values on this subset.

\section{A map from  $\md\La$ to $\C[N{]}$}

In \cite[Section 12]{L12}, Lusztig has given a geometric construction of the
enveloping algebra $U(\mathfrak{n})$ of the Lie algebra of $N$.
It is very similar to Ringel's realization of $U(\mathfrak{n})$
as the Hall algebra of $\md(\F_qQ)$ ``specialized at $q=1$''.
Here $Q$ denotes any quiver obtained by orienting the edges
of the Dynkin diagram $\Delta$.

There are two main differences between Ringel's and Lusztig's
constructions. First, 
in Lusztig's approach one works directly at $q=1$ by replacing
the counting measure for varieties over finite fields by the
Euler characteristic measure for constructible subsets of complex
algebraic varieties. The second difference is that one replaces 
the module varieties of $Q$ by the module varieties of $\La$,
in order to obtain a construction independent of the choice of
an orientation of $\Delta$.

As a result, one gets a model of $U(\mathfrak{n})$ in which
the homogeneous piece $U(\mathfrak{n})_\mathbf{d}$ of multidegree 
$\mathbf{d}=(d_i)\in\N^I$ is realized
as a certain vector space of complex-valued constructible functions   
on the variety $\La_{\mathbf{d}}$ of $\La$-modules with dimension
vector $\mathbf{d}$.
It follows that to every $M\in\md\La$ of dimension vector
$\mathbf{d}$, one can attach a natural element of the dual space 
$U(\mathfrak{n})_\mathbf{d}^*$, namely the linear form $\delta_M$
mapping a constructible function $\psi \in
U(\mathfrak{n})_\mathbf{d}$ to its evaluation at $M$ (by regarding
$M$ as a point of $\La_{\mathbf{d}}$).
Let
\[
U(\mathfrak{n})_{\rm gr}^*:=\bigoplus_{\mathbf{d}\in\N^I}
U(\mathfrak{n})_\mathbf{d}^*
\]
be the {\em graded} dual of $U(\mathfrak{n})$
endowed with the dual Hopf structure.
The following result is well-known to the experts,
but we were unable to find a convenient reference. 
We include a sketch of proof for the convenience of the reader.

\begin{proposition}
$U(\mathfrak{n})^*_{\rm gr}$ is isomorphic, as a Hopf algebra,
to $\C[N]$.
\end{proposition}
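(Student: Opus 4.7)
The plan is to produce an explicit graded Hopf algebra pairing between $\C[N]$ and $U(\mathfrak{n})$ that is perfect on each graded piece. Realizing $U(\mathfrak{n})$ as the algebra of left-invariant algebraic differential operators on $N$, I would define
$$\langle f, u \rangle := (uf)(e), \qquad f \in \C[N],\ u \in U(\mathfrak{n}),$$
where $e \in N$ is the identity. That this is a Hopf pairing is a formal check: compatibility between the product of $U(\mathfrak{n})$ and the product of $\C[N]$ reduces to the Leibniz rule together with the primitivity $\Delta(X) = X \otimes 1 + 1 \otimes X$ for $X \in \mathfrak{n}$, while compatibility between the comultiplication of $\C[N]$ (dual to the group law $N \times N \to N$) and the product of $U(\mathfrak{n})$ is built into the left-invariance of the operators.

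Next I would equip both sides with the $\N^I$-grading by root-lattice weights. The maximal torus $T$ normalizes $N$ and acts on $\C[N]$ by $(t \cdot f)(n) = f(t^{-1}nt)$, producing $\C[N] = \bigoplus_{\mathbf{d} \in \N^I} \C[N]_{\mathbf{d}}$ with $\mathbf{d}=(d_i)$ denoting the weight $\sum_i d_i \alpha_i$; the adjoint action of $T$ on $\mathfrak{n}$ gives the matching grading $U(\mathfrak{n}) = \bigoplus_{\mathbf{d}} U(\mathfrak{n})_{\mathbf{d}}$. Since $e$ is $T$-fixed, the pairing vanishes unless the two gradings agree, and hence restricts to pairings $\C[N]_{\mathbf{d}} \times U(\mathfrak{n})_{\mathbf{d}} \to \C$. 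To show that each such pairing is perfect, I would use that $N$ is connected and unipotent over $\C$, so that the exponential map $\exp : \mathfrak{n} \to N$ is an isomorphism of affine algebraic varieties. Composing the pullback isomorphism $\exp^* : \C[N] \cong \operatorname{Sym}(\mathfrak{n}^*)$ with the PBW symmetrization $U(\mathfrak{n}) \cong \operatorname{Sym}(\mathfrak{n})$, the pairing translates into the standard evaluation pairing between $\operatorname{Sym}(\mathfrak{n}^*)$ and $\operatorname{Sym}(\mathfrak{n})$. Both $\C[N]_{\mathbf{d}}$ and $U(\mathfrak{n})_{\mathbf{d}}$ are finite-dimensional of the same dimension (the Kostant partition function of $\mathbf{d}$), so the induced map $\C[N]_{\mathbf{d}} \to U(\mathfrak{n})_{\mathbf{d}}^*$ is an isomorphism; summing over $\mathbf{d}$ produces the desired Hopf algebra isomorphism $\C[N] \cong U(\mathfrak{n})^*_{\rm gr}$.

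The delicate point is the passage via $\exp$: one must verify that pullback by $\exp$ really converts the evaluation-by-derivation pairing $(f,u) \mapsto (uf)(e)$ into the standard pairing on symmetric algebras. This rests ultimately on the fact that for $X_1, \ldots, X_k \in \mathfrak{n}$ the nilpotency of the operators $\operatorname{ad}(X_i)$ makes the expansion of $f(\exp(t_1 X_1) \cdots \exp(t_k X_k))$ a genuine polynomial in the $t_i$ whose multi-Taylor coefficients at the origin recover the values $\langle f, X_{i_1} \cdots X_{i_r} \rangle$. Once this is in place, the remaining compatibilities (Hopf axioms, $T$-equivariance, dimension counts) are standard.
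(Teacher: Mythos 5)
Your argument is correct, but it is a genuinely different route from the one in the paper. The paper's proof works on the group side: since $H = U(\mathfrak{n})^*_{\rm gr}$ is a commutative Hopf algebra, it is automatically the coordinate ring of the affine algebraic group $\Hom_{\rm alg}(H,\C) \cong G(H^\circ)$ of group-like elements in the finite dual; one then observes that the exponentials $\exp(te_i)$ are group-like in $H^\circ$, so that $x_i(t) \mapsto \exp(te_i)$ extends to a group homomorphism $N \to G(H^\circ)$, and bijectivity is deduced from both $H$ and $\C[N]$ being polynomial rings in $r$ variables. You instead stay on the algebra side: you build an explicit Hopf pairing $\langle f, u\rangle = (uf)(e)$, use the $T$-weight grading to reduce to finite-dimensional graded pieces, and then verify perfectness by transporting the pairing through $\exp^*$ to the standard one between $\operatorname{Sym}(\mathfrak{n}^*)$ and $\operatorname{Sym}(\mathfrak{n})$, finishing with the Kostant partition function dimension count. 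Both proofs hinge on $\exp : \mathfrak{n} \to N$ being an isomorphism of varieties, but they exploit it at different points. The paper's argument is shorter and leans on the general Hopf-algebraic machinery of reconstructing a group from its coordinate ring; yours has the merit of producing the nondegenerate pairing explicitly, which is in fact closer to the way the map $M \mapsto \varphi_M$ (dual to Lusztig's functions) is actually used later in the paper, since it gives a direct handle on the graded duality $U(\mathfrak{n})_\mathbf{d}^* \cong \C[N]_\mathbf{d}$ piece by piece.
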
 
\begin{proof} (Sketch.)
$H=U(\mathfrak{n})^*_{\rm gr}$ is a commutative Hopf algebra, and 
therefore it can be regarded as the coordinate ring of the
affine algebraic group $\Hom_{\rm alg}(H,\C)$ of algebra homomorphisms
from $H$ to $\C$, or equivalently as the coordinate ring
of the group $G(H^\circ)$ of all group-like
elements in the dual Hopf algebra $H^\circ$ (see \emph{e.g.}
\cite[\S3.4]{A}).
Note that $H$ being the graded dual of $U(\mathfrak{n})$, 
the dual $H^*$ of $H$ is the completion $\widehat{U(\mathfrak{\nn})}$
of $U(\mathfrak{n})$ with respect to its grading. 
A simple calculation shows that for every $e\in\mathfrak{n}$ 
the exponential 
$\exp(e) = \sum_{k\ge 0} e^k/k!\in \widehat{U(\mathfrak{\nn})}$ 
is a group-like element in $H^\circ$.
Let $e_i\ (i\in I)$ be the Chevalley generators of $\mathfrak{n}$.
Then the map $x_i(t) \mapsto \exp(te_i)\ (i\in I)$ extends 
to a homomorphism from $N$ to $G(H^\circ)$.
One can check that this is an isomorphism using the fact that 
$H$ is a polynomial algebra in $r$ variables. 
This induces the claimed Hopf algebra isomorphism from $H$
to $\C[N]$. 
\end{proof}

Let $\iota : U(\mathfrak{n})^*_{\rm gr} \to \C[N]$ denote
this isomorphism.
Let $\omega$ denote the automorphism of $\C[N]$ described
in \cite[\S 1.7]{GLSLMS}. It anti-commutes with the comultiplication,
and the corresponding anti-automorphism of $N$ leaves invariant
the one-parameter subgroups $x_i(t)$. 
In other words, for $f \in \C[N]$ we have
\[
(\omega f)(x_{i_1}(t_1)\cdots x_{i_k}(t_k)) =
f(x_{i_k}(t_k)\cdots x_{i_1}(t_1)),
\quad
(i_1,\ldots i_k \in I,\ t_1, \ldots,t_k \in \C).
\]
Define $\varphi_M = \omega\circ\iota(\delta_M)$.
We have thus obtained a map $M \mapsto \varphi_M$ from $\md\La$
to $\C[N]$. Let us describe it more explicitly.

Consider a composition series 
\[
\mathfrak{f}=(\{0\}=M_0 \subset M_1 \subset \cdots \subset M_d = M)
\]
of $M$ with simple factors $M_k/M_{k-1} \cong S_{i_k}$. 
We call $\ii:=(i_1,\ldots ,i_d)$ the \emph{type} of~$\mathfrak{f}$.
Let $\Phi_{\ii,M}$ denote the subset of the flag variety of $M$
(regarded as a $\C$-vector space) consisting of all flags which
are in fact composition series of $M$ (regarded as a $\La$-module)
of type $\ii$. This is a closed subset of the flag variety, hence
a projective variety. We denote by 
$\chi_{\ii,M} = \chi(\Phi_{\ii,M})\in\Z$ its Euler characteristic.
By unwinding Lusztig's construction of $U(\mathfrak{n})$,
dualizing it, and going through the above isomorphisms,
one gets the following formula for $\varphi_M$.
\begin{proposition}\label{prop51}
For every $\ii=(i_1,\ldots,i_k)\in I^k$ we have
\[ 
\varphi_M(x_{i_1}(t_1)\cdots x_{i_k}(t_k)) = 
\sum_{\ab\in\N^k}\chi_{\ii^\ab,M} 
\,\frac{t_1^{a_1}\cdots  t_k^{a_k}}{a_1!\cdots a_k!},
\]
where we use the short-hand notation  
$\ii^\ab=(\underbrace{i_1,\ldots,i_1}_{a_1},\ldots,
\underbrace{i_k,\ldots,i_k}_{a_k})$.   
\end{proposition}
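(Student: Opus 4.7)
The plan is to unwind the definition $\varphi_M=\omega\circ\iota(\delta_M)$ by pushing the evaluation at $x_{i_1}(t_1)\cdots x_{i_k}(t_k)$ back through the three ingredients one at a time. First, using the explicit description of $\omega$ recalled just before the statement, the left hand side equals
\[
\iota(\delta_M)\bigl(x_{i_k}(t_k)\cdots x_{i_1}(t_1)\bigr).
\]
Next, via the Hopf algebra isomorphism $\iota:U(\nn)^*_{\rm gr}\to\C[N]$ of the previous proposition, evaluation at $g\in N$ is identified with the dual pairing against the corresponding group-like element of $\widehat{U(\nn)}$. The proof of that proposition establishes that $x_i(t)$ corresponds to $\exp(te_i)$, so the displayed quantity equals
\[
\bigl\langle \delta_M,\ \exp(t_k e_{i_k})\cdots \exp(t_1 e_{i_1})\bigr\rangle.
\]

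Expanding each exponential as a formal power series in $t_\ell$ and regrouping (legitimate because $\delta_M$ is supported on a single graded piece, so only finitely many terms contribute) yields
\[
\sum_{\ab\in\N^k}\frac{t_1^{a_1}\cdots t_k^{a_k}}{a_1!\cdots a_k!}\,\bigl\langle\delta_M,\ e_{i_k}^{a_k}\cdots e_{i_1}^{a_1}\bigr\rangle .
\]
So the proposition is reduced to the identity
\[
\bigl\langle \delta_M,\ e_{i_k}^{a_k}\cdots e_{i_1}^{a_1}\bigr\rangle = \chi_{\ii^\ab,M}.
\]

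This is where Lusztig's geometric model of $U(\nn)$ is brought in. In that model each Chevalley generator $e_i$ is represented by the characteristic function of the one-point variety $\La_{\alpha_i}$, and multiplication in $U(\nn)$ is realized as Lusztig's convolution of constructible functions on the module varieties $\La_{\dd}$. Unwinding the definition of this convolution inductively, the value at $M$ of the $k$-fold convolution $1_{\La_{\alpha_{i_k}}}^{*a_k}*\cdots*1_{\La_{\alpha_{i_1}}}^{*a_1}$ is precisely the Euler characteristic of the variety of composition series of $M$ whose successive simple subquotients, read from the socle upwards, are $S_{i_1}$ repeated $a_1$ times, then $S_{i_2}$ repeated $a_2$ times, and so on. That is exactly $\chi_{\ii^\ab,M}$, and by definition $\langle \delta_M,\cdot\rangle$ is the evaluation-at-$M$ functional on constructible functions, which concludes the calculation.

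The only genuinely delicate point, and the one I would check first, is the bookkeeping of orders: the reversal produced by $\omega$ in the first step must match exactly the reversal relating the PBW factorisation $e_{i_k}^{a_k}\cdots e_{i_1}^{a_1}$ to the socle-to-top reading of $\ii^\ab$ that comes out of Lusztig's convolution. In other words, the very purpose of the anti-automorphism $\omega$ in the definition of $\varphi_M$ is to compensate for this reversal so that the indexing sequence that appears on both sides of the formula is $\ii^\ab$ and not its reverse. Once the conventions on left/right convolution and on socle/top filtrations are fixed consistently (following for instance \cite{GLSComp}), the three reductions above combine to give the stated identity term by term in the $t_\ell$.
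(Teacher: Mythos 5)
Your proposal is correct and follows essentially the same route as the paper's sketch: expand the group element in $\widehat{U(\nn)}$ as a sum of PBW monomials, identify those monomials with constructible functions $\chi_{\jj}$ via Lusztig's Lagrangian model (using the convention $\chi_{j_1,\ldots,j_d}\equiv e_{j_d}\cdots e_{j_1}$), and observe that the order reversal from $\omega$ exactly cancels the order reversal built into that identification. You merely carry out the three reductions (unwinding $\omega$, then $\iota$, then the geometric model) one at a time rather than compressing them as the paper does, which makes the cancellation of the two reversals more visible; the mathematical content is identical.
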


\begin{proof} (Sketch.)
Using the above embedding of $N$ in $\widehat{U(\mathfrak{n})}$, we have
\[
x_{i_1}(t_1)\cdots x_{i_k}(t_k) =
\sum_{\ab\in\N^k} \frac{t_1^{a_1}\cdots t_k^{a_k}}{a_1!\cdots a_k!} \, 
e_{i_1}^{a_1}\cdots e_{i_k}^{a_k},
\]
as an element of $\widehat{U(\mathfrak{n})}$.  
Now, for a fixed $\jj=(j_1,\ldots,j_d)$, consider the constructible
function $\chi_\jj : M \mapsto \chi_{\jj,M}$ defined on $\La_\dd$,
where $\dd=(d_i)$ and $d_i$ is the number of $s$'s such that
$j_s=i$.
In Lusztig's Lagrangian construction of $U(\mathfrak{n})$ \cite{L12},
the functions $\chi_\jj$ span the vector space $U(\mathfrak{n})_\dd$.
More precisely, $\chi_\jj$ is identified with the monomial
$e_{j_d}\cdots e_{j_1}$.
By the definition of $\varphi_M$, we thus get
\[ 
\varphi_M(x_{i_1}(t_1)\cdots x_{i_k}(t_k))\equiv
\delta_M\left(\sum_{\ab\in\N^k} \frac{t_1^{a_1}\cdots t_k^{a_k}}{a_1!\cdots a_k!} \, 
\chi_{\ii^\ab}\right)
= \sum_{\ab\in\N^k}\chi_{\ii^\ab,M} 
\,\frac{t_1^{a_1}\cdots  t_k^{a_k}}{a_1!\cdots a_k!},
\]
as claimed.
Note that the twist by $\omega$ and the twist 
$\chi_{j_1,\ldots,j_d} \equiv e_{j_d}\cdots e_{j_1}$ cancel 
each other.
\end{proof}

\begin{remark}
{\rm
In \cite{GLSInv} we have denoted by $\varphi_M$ the function
$\iota(\delta_M)$, without twisting by $\omega$ 
(in the definition of $\Phi_{\ii,M}$
we were using
descending flags $\mathfrak{f}$ instead of ascending ones).
On the other hand, in \cite[\S 7]{GLSNag} we have defined
a left $U(\mathfrak{n})$-module structure on 
$U(\mathfrak{n})^*_{\rm gr}$.
The twisting by $\omega$ is needed if we want this structure
to agree with the usual left $U(\mathfrak{n})$-module structure on 
$\C[N]$ given by
\[
(e_i f)(x) = \frac{d}{dt} f(xx_i(t))\mid_{t=0},
\qquad
(f\in\C[N],\ x\in N).
\]
This is the convention which we have taken in \cite{GLSFour} and 
which we follow here.
}
\end{remark}

\begin{example}\label{exa52}
{\rm
In type $A_2$, we have $w_0=s_1s_2s_1$, hence every $f\in\C[N]$
is determined by its values at $x_1(t_1)x_2(t_2)x_1(t_3)$ for $(t_1,t_2,t_3)\in\C^3$.
One calculates 
\begin{align*}
x_1(t_1)x_2(t_2)x_1(t_3) &= 
\begin{pmatrix}
1&t_1&0\cr
0&1&0\cr
0&0&1
\end{pmatrix}
\begin{pmatrix}
1&0&0\cr
0&1&t_2\cr
0&0&1
\end{pmatrix}
\begin{pmatrix}
1&t_3&0\cr
0&1&0\cr
0&0&1
\end{pmatrix}
\\
&=
\begin{pmatrix}
1&t_1+t_3&t_1t_2\cr
0&1&t_2\cr
0&0&1
\end{pmatrix} .
\end{align*}
On the other hand, using the formula of Proposition~\ref{prop51} one
gets easily
\begin{align*}
\varphi_{S_1}(x_1(t_1)x_2(t_2)x_1(t_3))&=t_1+t_3,
\\
\varphi_{S_2}(x_1(t_1)x_2(t_2)x_1(t_3))&=t_2,
\\
\varphi_{Q_1}(x_1(t_1)x_2(t_2)x_1(t_3))&=t_1t_2,
\\
\varphi_{Q_2}(x_1(t_1)x_2(t_2)x_1(t_3))&=t_2t_3.
\end{align*}
It follows that, in terms of matrix entries, we have
\[
\varphi_{S_1} = n_{12},\quad
\varphi_{S_2} = n_{23},\quad
\varphi_{Q_1} = n_{13},\quad
\varphi_{Q_2} = 
\left|
\begin{matrix}
n_{12}&n_{13}\\
1&n_{23}
\end{matrix}
\right|.
\]     
}
\end{example}

\begin{exo}
{\rm
In type $A_n$, for $1\le i\le j \le n$, let $M_{[i,j]}$
denote the indecomposable
$\La$-module of dimension $j-i+1$ with socle $S_i$ and top $S_j$. 
($M_{[i,j]}$ is in fact uniserial.)
Show that $\varphi_{M_{[i,j]}}=n_{i,j+1}$, the matrix
entry on row $i$ and column $j+1$.
}
\end{exo}

\begin{exo}\label{exo54}
{\rm
In type $A_n$, 
show that $\varphi_{Q_k}$ is equal to the $k\times k$
minor of
\[
\begin{pmatrix}
1& n_{12}& \ldots & n_{1,n+1}\\
0& 1& \ldots & n_{2,n+1}\\
\vdots&\vdots&\ddots&\vdots \\
0&0&\ldots &1
\end{pmatrix}
\] 
with row indices $1,2,\ldots, k$ and column indices
$n-k+2, n-k+3, \ldots, n+1$.

More generally, show that for every submodule $M$ of
$Q_k$, $\varphi_M$ is equal to a $k\times k$ minor 
with row indices $1,2,\ldots, k$, and that conversely,
every nonzero $k\times k$ minor 
with row indices $1,2,\ldots, k$ is of the form 
$\varphi_M$ for a unique submodule $M$ of $Q_k$. 
}
\end{exo}

\begin{exo}\label{exo55}
{\rm
In type $D_4$ a reduced decomposition of $w_0$ is for
example 
\[
w_0 = s_1s_2s_4s_3s_1s_2s_4s_3s_1s_2s_4s_3.
\]
Using the realization of $N$ as a group of unitriangular
$8 \times 8$ matrices given in Example~\ref{exa42}, calculate
(possibly with the help of your favorite computer algebra system)
the product
\[
x=x_1(t_1)x_2(t_2)x_4(t_3)x_3(t_4)
x_1(t_5)x_2(t_6)x_4(t_7)x_3(t_8)
x_1(t_9)x_2(t_{10})x_4(t_{11})x_3(t_{12}).
\]

Check that the first row of the matrix $x$ is equal to
\[
[1,\qquad 
t_3 + t_7 + t_{11},\qquad 
t_3t_4+t_3t_8+t_7t_8+t_3t_{12}+t_7t_{12}+t_{11}t_{12},
\]
\[
t_3t_4t_6+t_3t_4t_{10}+t_3t_8t_{10}+t_7t_8t_{10},\qquad
t_3t_4t_5+t_3t_4t_9+t_3t_8t_9+t_7t_8t_9,
\]
\[
t_3t_4t_5t_6+t_3t_4t_5t_{10}+t_3t_4t_6t_9+t_3t_4t_9t_{10}
+t_3t_8t_9t_{10}+t_7t_8t_9t_{10},
\]
\[
t_3t_4t_5t_6t_8+t_3t_4t_5t_6t_{12}+t_3t_4t_6t_9t_{12}
+t_3t_4t_5t_{10}t_{12}+t_3t_4t_9t_{10}t_{12}
\]
\[
+\ t_3t_8t_9t_{10}t_{12}
+t_7t_8t_9t_{10}t_{12},\qquad
t_3t_4t_5t_6t_8t_{11}]
 .
\]

Check that the 8 entries on this row are equal to $\varphi_M(x)$
where $M$ runs over the 8 submodules of $Q_4$, including
the zero and the full submodules (see Example~\ref{exa31}).

Express in a similar way all the entries of $x$ as the evaluations
at $x$ of functions $\varphi_M$ where $M$ runs over the
subquotients of $Q_4$. 

Investigate the relations between the $2\times 2$ minors 
taken on the first 2 rows of $x$ and the values $\varphi_M(x)$ 
where $M$ is a submodule of $Q_3$. 
}
\end{exo}

\section{Multiplicative properties of $\varphi$}

In the geometric realization of $U(\mathfrak{n})$ given
in \cite{L12}, only the multiplication is constructed,
or equivalently the comultiplication of $\C[N]$.
For our purposes though, it is essential to study the
multiplicative properties of the maps $\varphi_M$.
The most important ones are 
\begin{theorem}[\cite{GLSAnnENS, GLSComp}]\label{th61}
Let $M, N \in \md\La$. Then the following hold:
\begin{enumerate}
\item $\varphi_{M}\varphi_{N} = \varphi_{M\oplus N}$.
\item Assume that $\dim \Ext_{\La}^1(M,N) = 1$. 
Let 
\[
0\rightarrow M \rightarrow X \rightarrow N\rightarrow 0,
\qquad 
0\rightarrow N \rightarrow Y \rightarrow M\rightarrow 0,
\] 
be two non-split short exact sequences (note that this determines 
$X$ and $Y$ uniquely up to isomorphism). Then
$
\varphi_{M}\varphi_{N} = \varphi_X + \varphi_Y.
$
\end{enumerate}
\end{theorem}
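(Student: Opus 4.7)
Both equalities live in $\C[N]$, so by Proposition~\ref{prop51} and the density of the image of $(t_1,\ldots,t_r)\mapsto x_{i_1}(t_1)\cdots x_{i_r}(t_r)$ for a reduced expression $w_0=s_{i_1}\cdots s_{i_r}$, both reduce to identities among the Euler characteristics $\chi_{\jj,L}$ for appropriate modules $L$ and sequences $\jj$. The plan is to prove each identity at this geometric level.

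For (1), the target is the multinomial shuffle formula
\[
\chi_{\ii^\ab,\,M\oplus N} \;=\; \sum_{\ab'+\ab''=\ab} \binom{\ab}{\ab'}\,\chi_{\ii^{\ab'},M}\,\chi_{\ii^{\ab''},N}.
\]
This is essentially the Hopf-duality between $\C[N]$ and $U(\nn)^*_{\mathrm{gr}}$: the multiplication of $\C[N]$ is dual to the comultiplication of $U(\nn)$, and in Lusztig's Lagrangian realization \cite{L12} the latter satisfies $\psi(M\oplus N)=(\Delta\psi)(M,N)$ after suitable pairing. Geometrically, a composition series of $M\oplus N$ yields, by intersection with $M$ and projection to $N$, a pair of filtrations of $M$ and $N$ whose types shuffle to the original one; stratifying $\Phi_{\ii^\ab,M\oplus N}$ by the dimension vectors of these two restricted filtrations and passing to Euler characteristics produces the identity. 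The $\omega$-twist causes no trouble because $\omega$ is a $\C$-algebra automorphism of $\C[N]$.

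For (2), combining with (1) reduces the claim to $\varphi_{M\oplus N}=\varphi_X+\varphi_Y$, hence to
\[
\chi_{\jj,\,M\oplus N} \;=\; \chi_{\jj,X}+\chi_{\jj,Y} \qquad \text{for every } \jj.
\]
Here I would exploit the universal family of extensions. Set $E=\Ext^1_\La(M,N)\cong \C$ and let $\mathcal{Z}\to E$ be the total space of the universal extension of $M$ by $N$: its fiber at $0$ is $M\oplus N$, and every other fiber is isomorphic to $X$. The scaling action of $\C^*$ on $E$ lifts to $\mathcal{Z}$ and thence to the relative composition-series variety $\Phi_{\jj}(\mathcal{Z}/E)\to E$; a Bialynicki-Birula-type analysis expresses $\chi_{\jj,X}$ as $\chi_{\jj,M\oplus N}$ minus the Euler characteristic of the locus of composition series on the central fiber that fail to extend across the family. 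The $\Ext$-symmetry $\Ext^1_\La(M,N)\cong D\Ext^1_\La(N,M)$ recalled after Example~\ref{exa31} furnishes the dual one-parameter family of extensions of $N$ by $M$; running the same analysis there and matching the two boundary contributions identifies this missing term with $\chi_{\jj,Y}$.

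The main obstacle lies in this last matching step: identifying the boundary contribution from the degeneration of $X$ to $M\oplus N$ with $\chi_{\jj,Y}$, the invariant associated to the extension in the opposite direction. This is where one must use more than routine homological algebra; it requires the functorial $\Ext$-duality peculiar to the preprojective algebra to pair flags in the $M$-by-$N$ and $N$-by-$M$ extension families in an Euler-characteristic-preserving manner. The density reduction and the shuffle decomposition used in (1) are routine by comparison, and the whole content of the theorem sits in this symmetric degeneration identity.
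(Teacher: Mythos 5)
The paper does not prove Theorem~\ref{th61} in the text: both parts are attributed to \cite{GLSAnnENS} and \cite{GLSComp}, so there is no in-text argument to compare against. Your treatment of part (1) is fine in outline --- reducing to the Euler-characteristic level via Proposition~\ref{prop51} and density, and then proving the binomial shuffle identity $\chi_{\ii^\ab,M\oplus N}=\sum_{\ab'+\ab''=\ab}\binom{\ab}{\ab'}\chi_{\ii^{\ab'},M}\chi_{\ii^{\ab''},N}$ by decomposing a flag of $M\oplus N$ into its intersection with $M$ and its image in $N$. That is the standard argument and it works.

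For part (2) there is a genuine gap at the exact point you flag as the ``main obstacle,'' and the degeneration mechanism you propose does not close it. Consider the $\C^*$-action on the relative composition-series variety $\Phi_{\jj}(\mathcal{Z}/E)\to E=\Ext^1_\La(M,N)$: the fixed locus lies entirely over $0\in E$ and consists of the split flags of $M\oplus N$, so the identity $\chi=\chi^{\C^*}$ merely reproduces the shuffle identity of part (1) and yields no relation involving $\chi_{\jj,X}$. The quantity $\chi_{\jj,X}$ only enters through $\chi(\Phi_{\jj}(\mathcal{Z}/E))=\chi_{\jj,M\oplus N}+\chi(\C^*)\,\chi_{\jj,X}$, where it is multiplied by $\chi(\C^*)=0$, so a naive Bialynicki--Birula computation is vacuous here. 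More seriously, $Y$ is built from the \emph{other} extension group $\Ext^1_\La(N,M)$; the functorial duality $\Ext^1_\La(M,N)\cong D\Ext^1_\La(N,M)$ identifies these one-dimensional vector spaces but does not by itself produce a constructible correspondence between flags on the special fiber $M\oplus N$ that ``fail to lift along $\mathcal{Z}$'' and flags on $Y$. Supplying that correspondence is precisely the content of the theorem. The actual proof (in \cite{GLSComp}, adapting Caldero--Keller) does not use a one-parameter degeneration at all: one forms the variety of pairs consisting of a flag of type $\jj$ in a module $L$ together with a short exact sequence $0\to M\to L\to N\to 0$, together with its mirror with $M$ and $N$ interchanged, computes the Euler characteristics in two ways by projecting either to the flag or to the extension class, and the $\Ext$-symmetry of $\La$ is invoked at one decisive point to match the two computations (playing the role Serre duality plays for hereditary algebras). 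Your sketch correctly names this symmetry as the crucial input, but the step where it is used is asserted rather than proved, and the BB framing does not supply it.
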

Note that in \cite{GLSComp} a formula is proved which generalizes
$(2)$ to any pair $(M,N)$ of $\La$-modules with 
$\dim \Ext^1_{\La}(M,N) > 0$. It involves all possible middle terms of
non-split short exact sequences with end terms $M$ and $N$, weighted
by certain Euler characteristics. 
It was inspired by a similar formula of Caldero and Keller 
in the framework of cluster categories \cite{CK}.
We will not need this general 
multiplication formula here.
\begin{example}
{\rm
Type $A_2$. 
Using the formulas of Example~\ref{exa52}, one checks easily that
\[
\varphi_{S_1}\varphi_{S_2} = \varphi_{Q_1} + \varphi_{Q_2},
\]
in agreement with the short exact sequences
\[
0\rightarrow S_1 \rightarrow Q_1 \rightarrow S_2 \rightarrow 0,
\qquad
0\rightarrow S_2 \rightarrow Q_2 \rightarrow S_1\rightarrow 0.
\]
}
\end{example}

\begin{exo}
{\rm
Type $A_3$. 
Consider the following indecomposable $\La$-modules
defined unambiguously by means of their socle filtration:
\[
M= S_2, \quad
N=\begin{tabular}{c}
$S_1 \oplus S_3$\\
\hline
$S_2$
\end{tabular}, \quad
X=
\begin{tabular}{c}
$S_2$\\
\hline
$S_1 \oplus S_3$\\
\hline
$S_2$
\end{tabular}=Q_2, \quad
Y=
\begin{tabular}{c}
$S_1$\\
\hline
$S_2$
\end{tabular}, \quad
Z=
\begin{tabular}{c}
$S_3$\\
\hline
$S_2$
\end{tabular}.
\]
Check that
$
\varphi_{M}\varphi_{N} = \varphi_{X} + \varphi_{Y\oplus Z}.
$
Using Exercise~\ref{exo54}, show that this identity is nothing
else than the classical Pl\"ucker relation
\[
[1,3]\times [2,4] =  [1,2]\times [3,4] + [1,4]\times [2,3]
\]
between $2\times2$ minors of the matrix of coordinate functions
\[
\begin{pmatrix}
1& n_{12}& n_{13} & n_{14} \\
0& 1& n_{23} & n_{24}\\
0& 0& 1& n_{34} \\
0& 0& 0& 1
\end{pmatrix},
\] 
where $[i,j]$ denotes the $2\times 2$ minor on rows $(1,2)$
and columns $(i,j)$. 
}
\end{exo}

\section{The dual semicanonical basis}
\label{sect7}

The functions $\varphi_M\ (M\in\md\La)$ satisfy many linear
relations. For example if $\dim \Ext_{\La}^1(M,N) = 1$, combining 
(1) and (2) in Theorem~\ref{th61} we get
\[
\varphi_{M\oplus N} = \varphi_X + \varphi_Y.
\]
It is possible, though, to form bases of $\C[N]$ consisting
of functions $\varphi_M$ where $M$ is taken in a 
certain restricted family of modules $M$.
For example, let $Q$ be a fixed orientation of $\Delta$.
Every $\C Q$-module can be regarded as a $\La$-module
in an obvious way. 
It is easy to check that 
\[
\{\varphi_M\mid M\in \md(\C Q)\}
\]
is a $\C$-basis of $\C[N]$. 
In fact this is the dual of the PBW-basis of $U(\mathfrak{n})$
constructed from $Q$ by Ringel (see \cite[\S 5.9]{GLSAnnENS}).

Unfortunately, this basis depends on the choice of the orientation
$Q$.
Using some geometry, one can obtain
a more ``canonical'' basis of $\C[N]$.
Let us fix a dimension vector $\dd\in\N^I$ and regard the 
map $\varphi$ as a map from the module variety $\La_{\dd}$
to $\C[N]$.
This is a constructible map, hence on every irreducible component
of $\La_{\dd}$ there is a Zariski open set on which $M\mapsto
\varphi_M$ is constant. Let us say that a module $M$ in this
open set is \emph{generic}.
Then, dualizing Lusztig's construction in \cite{L4}, one gets
\begin{theorem}
$\{\varphi_M\mid M \text{ is generic}\}$
is a basis of $\C[N]$.
\end{theorem}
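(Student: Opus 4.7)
The plan is to dualize the main theorem of Lusztig's \cite{L4}. Recall from Section~5 that in the Lusztig--Lagrangian realization, $U(\mathfrak{n})_{\dd}$ is a specific vector space of $\C$-valued constructible functions on the nilpotent variety $\La_{\dd}$. Every such $\psi$ is by construction constant on a Zariski open subset of each irreducible component $Z\subseteq\La_{\dd}$; write $\psi(Z)$ for this generic value. One therefore has an evaluation map
\[
\operatorname{ev}_{\dd}\colon U(\mathfrak{n})_{\dd}\longrightarrow \prod_{Z\in\operatorname{Irr}(\La_{\dd})}\C,\qquad \psi\longmapsto(\psi(Z))_{Z},
\]
and the content of Lusztig's theorem is precisely that $\operatorname{ev}_{\dd}$ is a linear isomorphism: for each component $Z$ he constructs an element $f_Z\in U(\mathfrak{n})_{\dd}$ with $f_Z(Z)=1$ and $f_Z(Z')=0$ for $Z'\neq Z$, and these $f_Z$ form a $\C$-basis of $U(\mathfrak{n})_{\dd}$ (the semicanonical basis).

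Dualizing, for each $Z\in\operatorname{Irr}(\La_{\dd})$ pick any module $M_Z$ in the dense open subset of $Z$ on which the constructible map $M\mapsto\delta_M$ is constant; this is precisely our notion of a \emph{generic} module in $Z$. Then $\delta_{M_Z}\in U(\mathfrak{n})_{\dd}^{*}$ is, by definition, the $Z$-th component of $\operatorname{ev}_{\dd}$, so $\{\delta_{M_Z}\}_{Z}$ is the basis of $U(\mathfrak{n})_{\dd}^{*}$ dual to the semicanonical basis $\{f_Z\}_{Z}$. Summing over all dimension vectors $\dd\in\N^{I}$, the family $\{\delta_M\mid M\text{ generic}\}$ is therefore a $\C$-basis of the graded dual $U(\mathfrak{n})^{*}_{\mathrm{gr}}$.

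Finally, I would transport this basis along the Hopf algebra isomorphism $\iota\colon U(\mathfrak{n})^{*}_{\mathrm{gr}}\to\C[N]$ of the previous section and the $\C$-linear automorphism $\omega$ of $\C[N]$. Since $\varphi_M=\omega\circ\iota(\delta_M)$ by definition, it follows that $\{\varphi_M\mid M\text{ generic}\}$ is a basis of $\C[N]$, as required. The only ingredient that is not essentially formal is Lusztig's bijectivity statement for $\operatorname{ev}_{\dd}$, which rests on a careful analysis of the irreducible components of $\La_{\dd}$ and of their interaction with the Hall-style convolution product used to build $U(\mathfrak{n})_{\dd}$; this is the step I would import directly from \cite{L4} and would be the main obstacle if one tried to reprove the theorem from scratch.
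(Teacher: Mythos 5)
Your proposal is correct, and it coincides with the paper's approach: the paper itself offers no proof, only the sentence ``dualizing Lusztig's construction in \cite{L4}, one gets'' the theorem, and your argument is exactly what that phrase unpacks to. You correctly identify the key input as Lusztig's result that the evaluation map $\operatorname{ev}_{\dd}$ sending a constructible function in the Lagrangian realization of $U(\nn)_{\dd}$ to its tuple of generic values on the irreducible components of $\La_{\dd}$ is a linear isomorphism, with the semicanonical basis $\{f_Z\}$ being the preimage of the standard basis; dualizing gives $\{\delta_{M_Z}\}$ as the dual basis of $U(\nn)_{\dd}^{*}$, and transport by the isomorphism $\iota$ and the automorphism $\omega$ (both defined in the paper) carries it to the stated basis of $\C[N]$.
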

This is the dual of Lusztig's \emph{semicanonical basis} of 
$U(\mathfrak{n})$.
We shall call it the \emph{dual semicanonical basis} of $\C[N]$. 
By construction it comes with a natural labelling by the 
union over all $\dd\in\N^I$ of the sets of irreducible
components of the varieties~$\La_\dd$. 

Important examples of generic modules are given by rigid modules.
We say that a $\La$-module $M$ is \emph{rigid} if 
$\Ext^1_\La(M,M)=0$, or equivalently if the orbit of $M$
in its module variety is open (see \cite{GLSInv}).

\begin{corollary}
If $M$ is a rigid $\La$-module then $\varphi_M$ belongs to
the dual semicanonical basis of $\C[N]$.
\end{corollary}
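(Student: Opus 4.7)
The plan is to realize $M$ as a generic point of some irreducible component of the module variety $\La_\dd$, where $\dd$ is the dimension vector of $M$; the corollary then follows immediately from the description of the dual semicanonical basis in the theorem just stated.

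First, I invoke the fact already quoted in the excerpt (with reference to \cite{GLSInv}): a $\La$-module $M$ is rigid if and only if its $GL_\dd$-orbit $\mathcal{O}_M$ in $\La_\dd$ is open. Being the image of the connected algebraic group $GL_\dd$ under the orbit map, $\mathcal{O}_M$ is irreducible, so it is contained in a single irreducible component $Z$ of $\La_\dd$, and openness forces $\overline{\mathcal{O}_M} = Z$.

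Next, I observe that the function $N \mapsto \varphi_N$ is constant on $\mathcal{O}_M$. This is because $\varphi_N$ is an isomorphism invariant of the $\La$-module $N$: by Proposition~\ref{prop51}, $\varphi_N$ is determined by the Euler characteristics $\chi_{\ii,N}$ of the composition series varieties $\Phi_{\ii,N}$, and these depend only on the isomorphism class of $N$. Every point of $\mathcal{O}_M$ represents a module isomorphic to $M$, so $\varphi_N = \varphi_M$ throughout $\mathcal{O}_M$.

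Combining these, the dense open subset of $Z$ on which the constructible map $\varphi$ takes its generic value meets the dense open subset $\mathcal{O}_M$ of $Z$, and on this intersection $\varphi$ takes the value $\varphi_M$. Hence $\varphi_M$ is the generic function attached to the irreducible component $Z$, and therefore belongs to the dual semicanonical basis. The only nonroutine input is the implication ``rigid implies open orbit'', which I would import from \cite{GLSInv}; it amounts to a tangent space computation at $M$ in $\La_\dd$, using the $\Ext$-symmetry recalled in Section~3 to identify self-extensions with the obstruction to maximal dimension of the orbit.
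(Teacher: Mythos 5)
Your proof is correct and is exactly the argument the paper intends (the paper leaves it implicit, since the equivalence ``rigid $\Leftrightarrow$ open orbit'' is recalled just before the corollary and the conclusion is then immediate from the definition of a generic module). You correctly identify the two needed facts: an open orbit is dense in the unique irreducible component containing it, and $\varphi$ is constant on isomorphism classes, so $\varphi_M$ is the generic value on that component.
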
 

The converse does not hold in general. 
More precisely every generic $\La$-module is rigid if and only
if $\La$ has type $A_n\ (n\le 4)$ (see \cite{GLSAnnENS}).

\begin{example}
{\rm
In type $D_4$, there is a one-parameter family of indecomposable
$\La$-modules with socle series and radical series
$$
\begin{tabular}{c}
$S_3$\\
\hline
$S_1 \oplus S_2 \oplus S_4$\\
\hline
$S_3$
\end{tabular}
$$
These modules are generic, but they are not rigid. 
For example there is a self-extension with middle term $Q_3$.
}
\end{example}

\section{Dual Verma modules} \label{sect8}

Let $\gf$ denote the Lie algebra of $G$ with its triangular 
decomposition $\gf = \nn\oplus\mathfrak{h}\oplus\nn_-$. 
Any $G$-module can also
be regarded as a $\gf$-module. 
We shall denote by $L(\la)$ the irreducible finite-dimensional 
module with highest weight $\la$.
It is conveniently constructed as the unique top factor of the
infinite-dimensional Verma $\gf$-module $M(\la)$
(see \emph{e.g.} \cite[I, \S 3.2]{CSM}).    
As a $U(\nn_-)$-module, $M(\la)$ is naturally isomorphic
to $U(\nn_-)$, hence we have a natural projection
$U(\nn)\cong U(\nn_-)\cong M(\la) \to L(\la)$ for every weight $\la$.
Dualizing and taking into account that $L(\la)$
is self-dual, we thus get an embedding 
$L(\la) \to M(\la)^* \cong\C[N]$.
This embedding has a nice description in terms of the
functions $\varphi_M$, as we shall now see.

Let $\la = \sum_{i\in I} a_i\varpi_i$ be the decomposition
of $\la$ in terms of the fundamental weights $\varpi_i$.
As $L(\la)$ is finite-dimensional, the $a_i$'s are nonnegative
integers.
Set $Q_\la = \oplus_{i\in I} Q_i^{\oplus a_i}$, an injective $\La$-module.

\begin{theorem}[\cite{GLSNag}]\label{th81}
In the above identification of  $M(\la)^*$ with $\C[N]$,
the irreducible representation $L(\la)$ gets identified with the
linear span of
\[
\{\varphi_M \mid M \text{ is a submodule of } Q_\la\}.
\] 
\end{theorem}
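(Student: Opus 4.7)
The plan is to prove the identity $V_\lambda = L(\lambda)$ inside $\C[N]$, where $V_\lambda := \operatorname{span}\{\varphi_M \mid M \subseteq Q_\lambda\}$, by double inclusion. The key conceptual input is the standard characterization of $L(\lambda)^* \subset M(\lambda)^*$ as the annihilator of the maximal proper submodule $N(\lambda) = \sum_i U(\nn_-)\, f_i^{a_i+1} v_\lambda \subset M(\lambda)$. Transporting this through the chain $M(\lambda)^* \cong U(\nn_-)^*_{\mathrm{gr}} \cong U(\nn)^*_{\mathrm{gr}} \cong \C[N]$ (via the Chevalley involution, the map $\iota$, and the paper's $\omega$-twist), the membership $\varphi_M \in L(\lambda)$ becomes equivalent to the linear form $\delta_M$ vanishing on the left ideal $J_\lambda = \sum_i U(\nn)\, e_i^{a_i+1}$ of $U(\nn)$.

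First I would establish that $V_\lambda$ is invariant under the left $U(\nn)$-action on $\C[N]$. Differentiating the formula of Proposition~\ref{prop51} at $t_{k+1} = 0$ after appending a factor $x_i(t_{k+1})$, and stratifying the resulting flag variety by the codimension-one submodule just below the top of each composition series, one obtains a formula
\[
e_i \cdot \varphi_M \;=\; \sum_{[N]} c^{(i)}_{N,M}\, \varphi_N,
\]
where the sum runs over isomorphism classes of submodules $N \subseteq M$ with $M/N \cong S_i$, and $c^{(i)}_{N,M}$ is the Euler characteristic of the variety of such submodules isomorphic to $N$. Every $N$ appearing is again a submodule of $Q_\lambda$, so $V_\lambda$ is $U(\nn)$-stable.

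Next I would show $V_\lambda \subseteq L(\lambda)$. Via Lusztig's identification $\chi_\jj \equiv e_{j_d} \cdots e_{j_1}$ and Proposition~\ref{prop51}, the vanishing of $\delta_M$ on a generator $u \cdot e_i^{a_i+1}$ with $u = \chi_{(j_1, \ldots, j_d)}$ translates into the vanishing of $\chi_{\jj', M}$ for $\jj' = (\underbrace{i, \ldots, i}_{a_i+1}, j_1, \ldots, j_d)$. Such a composition series of $M$ would force its $(a_i+1)$-th term to be isomorphic to $S_i^{a_i+1}$, since $\Ext^1_\La(S_i, S_i) = 0$ because there are no loops in $\overline{Q}$; hence $\dim \Hom_\La(S_i, M) \geq a_i + 1$. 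But $M \subseteq Q_\lambda$ implies $\dim \Hom_\La(S_i, M) \leq \dim \Hom_\La(S_i, Q_\lambda) = a_i$, as $Q_\lambda = \bigoplus_i Q_i^{\oplus a_i}$ is the injective envelope of its socle, a contradiction. So $\Phi_{\jj', M} = \emptyset$, the Euler characteristic vanishes, and $\varphi_M \in L(\lambda)$.

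For the reverse inclusion, I would argue that $\varphi_{Q_\lambda}$ is cyclic for the $U(\nn)$-action on $L(\lambda)$. A torus-weight computation from Proposition~\ref{prop51} shows that $\varphi_M$ has weight equal to the dimension vector of $M$ viewed in the root lattice, so $\varphi_{Q_\lambda}$ carries the maximal such weight among the generators of $V_\lambda$, matching the weight of the lowest weight vector $v_{w_0 \lambda}$ of $L(\lambda)$. Since $L(\lambda)$ is generated by $v_{w_0\lambda}$ under the $U(\nn)$-action (the Chevalley-involution image of the standard fact $U(\nn_-) \cdot v_\lambda = L(\lambda)$), combining with the invariance of $V_\lambda$ yields $L(\lambda) = U(\nn) \cdot \varphi_{Q_\lambda} \subseteq V_\lambda$, and the two inclusions together complete the proof. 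The main obstacle is the careful bookkeeping of the various identifications---$\iota$, the Chevalley involution, and the paper's $\omega$-twist---needed to translate the ideal-theoretic description of $L(\lambda) \subset M(\lambda)^*$ into the combinatorial vanishing of Euler characteristics $\chi_{\jj, M}$; once these conventions are pinned down, the representation-theoretic content reduces to the elementary socle bound for submodules of an injective module.
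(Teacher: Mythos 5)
Your overall strategy---show the span $V_\lambda$ is a $U(\nn)$-stable subspace of $\C[N]$, prove $V_\lambda\subseteq L(\lambda)$ by checking that $\delta_M$ annihilates the left ideal $J_\lambda=\sum_i U(\nn)e_i^{a_i+1}$, and then force the reverse inclusion by cyclicity of $L(\lambda)$ under $U(\nn)$ from the lowest weight vector---is the natural one, and the socle-bound argument for the forward inclusion ($\dim\Hom_\La(S_i,M)\le a_i$ for $M\subseteq Q_\la$, versus $\ge a_i+1$ forced by an initial segment of $a_i+1$ copies of $S_i$ in a composition series, using $\Ext^1_\La(S_i,S_i)=0$) is correct and clean. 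The derivation of the $e_i$-action formula from Proposition~\ref{prop51} by differentiating an appended $x_i(t)$ and stratifying by the codimension-one submodule is also fine.

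There are, however, two places where you assert facts that the proof genuinely relies on but does not establish. First and most importantly, your reverse inclusion hinges on the identity of weights
\[
\underline{\dim}\,Q_\la \;=\; \la - w_0(\la)
\]
in the root lattice, which is exactly what lets you identify $\varphi_{Q_\la}$ with (a nonzero multiple of) the lowest weight vector $v_{w_0\la}$. The forward inclusion only gives the inequality $\underline{\dim}\,Q_\la \le \la - w_0(\la)$ (since $\varphi_{Q_\la}\in L(\la)$ is a nonzero weight vector); the opposite inequality, equivalently $\underline{\dim}\,Q_i = \varpi_i - w_0(\varpi_i)$ for each $i$, is a nontrivial fact about injective modules over the preprojective algebra of Dynkin type and must be proved or cited. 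Without it, the step ``$L(\la)=U(\nn)\cdot\varphi_{Q_\la}\subseteq V_\la$'' is not justified: a $U(\nn)$-stable subspace of $L(\la)$ automatically contains the \emph{highest} weight vector (not the lowest), and that alone does not generate $L(\la)$. Second, you implicitly use $\varphi_{Q_\la}\neq 0$; this is easy (injectives are rigid, so $\varphi_{Q_\la}$ is a dual semicanonical basis vector) but should be said. A small inaccuracy: the passage from $U(\nn_-)\cdot v_\la = L(\la)$ to $U(\nn)\cdot v_{w_0\la}=L(\la)$ is the action of $w_0$, not the Chevalley involution (the latter would land you in $L(-w_0\la)$). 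Finally, you correctly flag but do not carry out the $\omega$-twist bookkeeping; under the conventions of this survey the twist on $\C[N]$ corresponds to the anti-automorphism of $U(\nn)$ reversing words in the $e_i$, which sends the left ideal $J_\la$ to the right ideal $\sum_i e_i^{a_i+1}U(\nn)$, so the claimed equivalence ``$\varphi_M\in L(\la)$ iff $\delta_M$ kills $J_\la$'' needs the identification $M(\la)^*\cong\C[N]$ to be taken as $\omega\circ\iota$ rather than $\iota$; this is consistent with the paper's conventions but is precisely the kind of point that must be pinned down for the Euler-characteristic translation to come out with $\jj'$ having the block of $a_i+1$ copies of $i$ at the bottom of the flag.
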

We refer to \cite{GLSNag} for an explicit formula calculating the images of
$\varphi_M \in L(\la)$ under the action of the Chevalley generators
$e_i$ and $f_i$ of $\gf$.

\begin{example}
{\rm
In type $A_n$, consider the fundamental representation 
$L(\varpi_k)$.
It is isomorphic to the
natural representation of $\SL(n+1,\C)$ in $\wedge^k\C^{n+1}$.
Using Exercise~\ref{exo54}, we recover via Theorem~\ref{th81}
that $L(\varpi_k)$ can
be identified with the subspace of $\C[N]$ spanned by the
$k\times k$-minors taken on the first $k$ rows of  
\[
\begin{pmatrix}
1& n_{12}& \ldots & n_{1,n+1}\\
0& 1& \ldots & n_{2,n+1}\\
\vdots&\vdots&\ddots&\vdots \\
0&0&\ldots &1
\end{pmatrix}
\] 
}
\end{example}

\begin{example}
{\rm
In type $D_4$, consider the fundamental representation $L(\varpi_4)$.
It is isomorphic to the defining representation of $G$ in $\C^8$.
If we
realize $N$ as a group of $8 \times 8$ unitriangular
matrices as in Example~\ref{exa42}
and use Exercise~\ref{exo55}, we recover via Theorem~\ref{th81} that $L(\varpi_4)$ can
be identified with the subspace of $\C[N]$ spanned by the
coordinate functions mapping an $8\times 8$ matrix
$x\in N$ to the entries of its first row.
}
\end{example}

\section{Parabolic subgroups and flag varieties}  
 
Let us fix a proper subset $K$ of $I$.
Denote by $y_i(t)\ (i\in I, t\in \C)$ 
the one-parameter subgroups of $G$ attached to
the negatives of the simple roots.
Let $B$ be the Borel subgroup of $G$ containing $N$.
The subgroup of $G$ generated by $B$ and the elements
$y_k(t)\ (k\in K, t\in \C)$ is called the standard
parabolic subgroup attached to $K$. We shall denote it
by $B_K$. In particular, $B_\emptyset = B$. 
It is known that every parabolic subgroup of $G$ is
conjugate to a standard parabolic subgroup.
The unipotent radical of $B_K$ will be denoted by $N_K$.
 In particular, $N_\emptyset = N$. 

\begin{example}\label{exa91}
{\rm
Let $G=\SL(5,\C)$, a group of type $A_4$. 
We choose for $B$ the subgroup of upper triangular 
matrices. 
Take $K=\{1,3,4\}$.
Then $B_K$ and $N_K$ are the subgroups of $G$ 
with the following block form:  
\[
B_K =   
\begin{pmatrix} * & * & * & * & *\cr
          * & * & * & * & * \cr
          0 & 0 & * & * & * \cr
          0 & 0 & * & * & * \cr
          0 & 0 & * & * & * 
\end{pmatrix},
\qquad
N_K =
\begin{pmatrix} 1 & 0 & * & * & *\cr
          0 & 1 & * & * & * \cr
          0 & 0 & 1 & 0 & 0 \cr
          0 & 0 & 0 & 1 & 0 \cr
          0 & 0 & 0 & 0 & 1 
\end{pmatrix}. 
\] 
}
\end{example}

Geometrically, $N_K$ is an affine space.
It can be identified with an open cell in the partial
flag variety $B_K^{-}\backslash G$, where $B_K^-$ is the
opposite parabolic subgroup (defined as $B_K$ but switching
the $x_i(t)$'s and the $y_i(t)$'s).
More precisely, the restriction to $N_K$ of the natural
projection $G \to B_K^{-}\backslash G$ is an open embedding.

\begin{example}\label{exa92}
{\rm
Let us continue Example~\ref{exa91}.
We have 
\[
B_K^{-} =   
\begin{pmatrix} * & * & 0 & 0 & 0\cr
          * & * & 0 & 0 & 0 \cr
          * & * & * & * & * \cr
          * & * & * & * & * \cr
          * & * & * & * & * 
\end{pmatrix}.
\]
Let $(u_1,\ldots,u_5)$ be the standard basis of $\C^5$.
We regard vectors of $\C^5$ as row vectors and
we let $G$ act on the right
on $\C^5$, so that the $k$th row of the matrix $g$
is $u_kg$.
Then $B_K^-$ is the stabilizer of the $2$-space
spanned by $u_1$ and $u_2$ for the induced 
transitive action of $G$ on the Grassmann
variety of $2$-planes of $\C^5$. Hence 
$B_K^-\backslash G$ is the Grassmannian $\Gr(2,5)$ 
of dimension~6. 

The unipotent subgroup
$N_K$ can be identified with the open subset of $\Gr(2,5)$
consisting of all the $2$-planes whose first Pl\"ucker
coordinate $[1, 2]$ does not vanish. 
}
\end{example}

\section{Coordinate rings}  \label{sect10}

Let $J$ be the complement of $K$ in $I$. 
The partial flag variety $B_K^-\backslash G$ can be 
naturally embedded as a closed subset in 
the product of projective spaces 
\[
\prod_{j\in J} \Pro(L(\varpi_j))
\]
\cite[p.123]{LG}. 
This is a generalization of the classical Pl\"ucker embedding
of the Grassmannian $\Gr(k,n+1)$ in 
$\Pro(\wedge^k\C^{n+1})=\Pro(L(\varpi_k))$.
We denote by $\C[B_K^-\backslash G]$ the multi-homoge\-neous
coordinate ring of $B_K^-\backslash G$ coming from this 
embedding. 
Let $\Pi_J\cong \N^J$ denote the monoid of dominant integral 
weights of the form
\[
\la = \sum_{j\in J} a_j \varpi_j, \qquad (a_j\in \N).
\]
Then, $\C[B_K^-\backslash G]$ is a $\Pi_J$-graded ring with a natural 
$G$-module structure. 
The homogeneous component 
with multi-degree $\la\in\Pi_J$ is an irreducible $G$-module 
with highest weight $\la$.
In other words, we have
\[
\C[B_K^-\backslash G] = \bigoplus_{\la\in\Pi_J} L(\la).
\]
Moreover, $\C[B_K^-\backslash G]$ is generated by its subspace
$\bigoplus_{j\in J} L(\varpi_j)$.

In particular, $\C[B^-\backslash G]=\bigoplus_{\la\in\Pi} L(\la)$,
where the sum is over the monoid $\Pi$ of all dominant integral
weights of $G$. This is equal to the affine coordinate ring
$\C[N^-\backslash G]$ of the multi-cone $N^-\backslash G$  
over $B^-\backslash G$, that is, to the ring 
\[
\C[N^-\backslash G]=\{f\in\C[G]\mid f(n\,g)=f(g),\ n\in N^-,\ g\in G\}
\]
of polynomial functions on $G$ invariant under $N^-$.
We will identify $\C[B_K^-\backslash G]$ with the subalgebra
of $\C[N^-\backslash G]$ generated by the homogeneous elements
of degree $\varpi_j\ (j\in J)$.
\begin{example}\label{exa101}
{\rm We continue Example~\ref{exa92}. 
The Pl\"ucker embedding of the Grassmannian $\Gr(2,5)$
consists in mapping the $2$-plane $V$ of $\C^5$  
with basis $(v_1,v_2)$ to the line spanned by $v_1\wedge v_2$ 
in $\wedge^2\C^5$, which is isomorphic to   
$L(\varpi_2)$.

This induces an embedding of $\Gr(2,5)$ into $\Pro(L(\varpi_2))$.
The homogeneous coordinate ring for this embedding is 
isomorphic to the subring of $\C[G]$ generated by the
functions $g\mapsto \De_{ij}(g)$, where 
$\De_{ij}(g)$ denotes the
$2\times 2$ minor of $g$ taken on columns 
$i,j$ and on the first two rows.
The $\De_{ij}$ are called Pl\"ucker coordinates. 
As a $G$-module we have
\[
\C[\Gr(2,5)]=\bigoplus_{k\in \N} L(k\varpi_2),   
\]
where the degree $k$ homogeneous component $L(k\varpi_2)$
consists of the homogeneous polynomials of degree $k$
in the Pl\"ucker coordinates.
}
\end{example}

Some distinguished elements of degree $\varpi_j$ in 
$\C[N^-\backslash G]$ are the generalized minors 
\[
\Delta_{\varpi_j, w(\varpi_j)},\qquad (w\in W),
\]
(see \cite[\S 1.4]{FZ}).
The image of $N_K$ in $B_K^-\backslash G$ under the natural
projection is the open subset defined by the non-vanishing
of the minors $\De_{\varpi_j,\varpi_j}\ (j\in J)$.
Therefore the affine coordinate ring of $\C[N_K]$
can be identified with the subring of degree $0$ homogeneous 
elements in the localized ring 
$\C[B_K^-\backslash G][\De_{\varpi_j,\varpi_j}^{-1},\,j\in J]$.
Equivalently, $\C[N_K]$ can be identified with the quotient
of $\C[B_K^-\backslash G]$ by the ideal generated by the 
elements $\De_{\varpi_j,\varpi_j}-1\ (j\in J)$.

\begin{example}\label{exa102}
{\rm  We continue Example~\ref{exa101}. 
The coordinate ring of $\C[N_K]$ is isomorphic to the 
ring generated by the $\De_{ij}$ modulo the relation
$\De_{12}=1$. This description may seem  
unnecessarily complicated since 
$N_K$ is just an affine space of dimension $6$ and 
we choose a presentation with $9$ 
generators and the Pl\"ucker relations. 
But these generators are better adapted to the cluster
algebra structure that we shall introduce. 
}
\end{example}

Let $\pr_J \colon \C[B_K^-\backslash G] \to \C[N_K]$ denote the 
projection obtained by modding out the ideal 
generated by the 
elements $\De_{\varpi_j,\varpi_j}-1\ (j\in J)$.
If $\C[B_K^-\backslash G]$ is identified as explained above
with a subalgebra of $\C[G]$, this map $\pr_J$ is nothing else
than the restriction of functions from $G$ to $N_K$.
The restriction of $\pr_J$ to
each homogeneous piece $L(\la)\ (\la\in\Pi_J)$ of 
$\C[B_K^-\backslash G]$ is injective and gives an embedding of 
$L(\la)$ into $\C[N_K]$. 
Moreover, any $f\in\C[N_K]$ belongs to $\pr_J(L(\la))$ for
some $\la\in\Pi_J$.

Summarizing this discussion and taking into account
Theorem~\ref{th81}, we get the following description
of $\C[N_K]$, convenient for our purpose.

\begin{theorem}[\cite{GLSFour}]\label{th103}
Let $R_J$ be the subspace of $\C[N]$ spanned by
\[
\{\varphi_M \mid M \text{ is a submodule of } Q_\la \text{ for some } \la\in\Pi_J\}.
\] 
The restriction to $R_J$ of the natural homomorphism 
$\C[N] \to \C[N_K]$ (given by restricting functions from $N$ to $N_K$)
is an isomorphism.
\end{theorem}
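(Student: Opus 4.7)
The strategy is to factor the restriction map $R_J\to\C[N_K]$ through the projection $\pr_J\colon\C[B_K^-\backslash G]\to\C[N_K]$ of Section~\ref{sect10}. The key step is to build a surjective linear map $\Psi\colon\C[B_K^-\backslash G]\to R_J$ and then verify that $\rho\circ\Psi=\pr_J$, where $\rho\colon\C[N]\to\C[N_K]$ is restriction along the closed inclusion $N_K\subset N$.

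I define $\Psi$ as the restriction of functions from $G$ to $N$, composed with the inclusion $\C[B_K^-\backslash G]\subset\C[N^-\backslash G]\subset\C[G]$. On each irreducible summand $L(\la)$ with $\la\in\Pi_J$, this sends a vector $v\in L(\la)$ to the function $n\mapsto\xi_\la(nv)$, where $\xi_\la$ is the $N^-$-invariant highest weight covector. By comparing Taylor expansions at the identity (and using the symmetry of the Shapovalov form) one checks that this restriction coincides with the Verma-duality embedding $L(\la)\hookrightarrow M(\la)^*\cong\C[N]$ of Section~\ref{sect8}. Hence Theorem~\ref{th81} gives $\Psi(L(\la))=\mathrm{span}\{\varphi_M\mid M\subset Q_\la\}$, and summing over $\la\in\Pi_J$ yields $\Psi(\C[B_K^-\backslash G])=R_J$.

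The identity $\rho\circ\Psi=\pr_J$ is immediate once one recalls (Section~\ref{sect10}) that $\pr_J$ is just the restriction of functions from $G$ to $N_K$: both sides compute the same thing. Thus surjectivity of $\rho|_{R_J}$ follows from that of $\pr_J$. For injectivity, note that each generalized minor $\De_{\varpi_j,\varpi_j}$ restricts to $1$ on $N$, since $nv_{\varpi_j}\equiv v_{\varpi_j}$ modulo lower weight spaces for $n\in N$; hence $\Psi$ kills $\De_{\varpi_j,\varpi_j}-1$, and therefore kills the entire ideal $\ker\pr_J=\langle\De_{\varpi_j,\varpi_j}-1\mid j\in J\rangle$. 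Combined with $\rho\circ\Psi=\pr_J$, this forces $\ker(\rho|_{R_J})=\Psi(\ker\pr_J)=0$.

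The main technical obstacle is the identification, carried out in the second paragraph, of the matrix-coefficient embedding $L(\la)\hookrightarrow\C[N]$ with the Verma-duality embedding of Section~\ref{sect8}. This is the bridge between the preprojective-algebra picture (the functions $\varphi_M$) and the algebraic-group picture (coordinate rings on $G$ and $N_K$); it is precisely to make these two embeddings agree that the paper introduces the twist by $\omega$ and fixes the left $U(\nn)$-module structure on $\C[N]$ described in the Remark after Proposition~\ref{prop51}. Once this identification is granted, the remainder of the proof is formal manipulation of the generalized minors $\De_{\varpi_j,\varpi_j}$.
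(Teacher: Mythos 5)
Your proposal is correct and follows essentially the same route the paper sketches around Theorem~\ref{th103}: it rests on the identification, via Theorem~\ref{th81}, of the Verma-duality copy of $L(\la)$ inside $\C[N]$ with the span of $\{\varphi_M\mid M\subset Q_\la\}$, together with the description of $\pr_J$ as restriction of functions from $G$ to $N_K$ and the fact that $\De_{\varpi_j,\varpi_j}$ restricts to $1$ on $N$. The reorganization via the algebra homomorphism $\Psi$ and the ideal $\ker\pr_J=\langle\De_{\varpi_j,\varpi_j}-1\rangle$ is a clean way to package the injectivity step; the one genuinely nontrivial point, which you correctly single out, is matching the matrix-coefficient realization of $L(\la)\subset\C[G]$ with the Verma-duality embedding $L(\la)\hookrightarrow M(\la)^*\cong\C[N]$ of \S\ref{sect8}, and this does depend on the $\omega$-twist convention fixed in the Remark after Proposition~\ref{prop51}.
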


\section{The category $\Sub Q_J$}

Set $Q_J=\oplus_{j\in J} Q_j$.
Let $\Sub Q_J$ denote the full subcategory of $\md\La$ whose
objects are the submodules of a direct sum of a finite number
of copies of $Q_J$.
\begin{example}
{\rm 
Type $D_4$.
We have seen in Example~\ref{exa31} the structure of the 
indecomposable injective $Q_4$.
It is easy to see that $Q_4$ has only seven nonzero submodules
\[
\begin{tabular}{c}
$S_4$
\end{tabular}
,\quad
\begin{tabular}{c}
$S_3$\\
\hline
$S_4$
\end{tabular}
,\quad
\begin{tabular}{c}
$S_2$\\
\hline
$S_3$\\
\hline
$S_4$
\end{tabular}
,\quad
\begin{tabular}{c}
$S_1$\\
\hline
$S_3$\\
\hline
$S_4$
\end{tabular}
,\quad
\begin{tabular}{c}
$S_1 \oplus S_2$\\
\hline
$S_3$\\
\hline
$S_4$
\end{tabular}
,\quad
\begin{tabular}{c}
$S_3$\\
\hline
$S_1 \oplus S_2$\\
\hline
$S_3$\\
\hline
$S_4$
\end{tabular}
,\quad
\begin{tabular}{c}
$S_4$\\
\hline
$S_3$\\
\hline
$S_1 \oplus S_2$\\
\hline
$S_3$\\
\hline
$S_4$
\end{tabular},
\]
which are all indecomposable. 
It turns out that $\Sub Q_4$ contains a unique other indecomposable
object, which is a submodule of $Q_4\oplus Q_4$, and has the following
socle series
\[
\begin{tabular}{c}
$S_3$\\
\hline
$S_1 \oplus S_2$\\
\hline
$S_3$\\
\hline
$S_4 \oplus S_4$
\end{tabular}.
\]
Every object of $\Sub Q_4$ is a sum of copies of these eight
indecomposable objects.
}
\end{example}

Since $Q_J$ is an injective $\La$-module, the category $\Sub Q_J$ has
good homological properties \cite{AS}.
In particular, it is closed under extensions, 
has enough injectives, enough projectives and almost
split sequences. Moreover, the injectives coincide with the
projectives (it is a Frobenius category) and its stable
category is a 2-Calabi-Yau triangulated category.
Clearly, the algebra $R_J$ of Theorem~\ref{th103} is nothing
but the linear span of
\[
\{\varphi_M \mid M \in \Sub Q_J\}.
\]
Hence we may regard $\Sub Q_J$ as a kind of ``categorification''
of $\C[N_K]$. 
We are going to make this statement more precise by studying
the rigid modules in $\Sub Q_J$.
\begin{theorem}[\cite{GLSFour}]\label{th112}
Let $T$ be a rigid module in $\Sub Q_J$. The number of pairwise
nonisomorphic indecomposable direct summands of $T$ is
at most equal to $\dim N_K$.
\end{theorem}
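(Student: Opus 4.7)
The plan is to prove that the functions $\varphi_{T_1},\ldots,\varphi_{T_n}$ attached to the pairwise nonisomorphic indecomposable summands of $T$ are algebraically independent in $\C[N_K]$. Since $N_K$ is an irreducible affine variety, its coordinate ring has transcendence degree $\dim N_K$ over $\C$, so any algebraically independent family of regular functions on $N_K$ has size at most $\dim N_K$, giving the bound.

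The first step is to observe that for every tuple $\mathbf{m}=(m_1,\ldots,m_n)\in\N^n$, the module
$T^{(\mathbf{m})} := T_1^{\oplus m_1}\oplus\cdots\oplus T_n^{\oplus m_n}$
lies in $\Sub Q_J$ (which is closed under direct sums) and is again rigid. Rigidity passes up this way because $\Ext^1_\La(T,T)=0$ forces $\Ext^1_\La(T_i,T_j)=0$ for all $i,j$, and $\Ext^1$ distributes over finite direct sums, so $\Ext^1_\La(T^{(\mathbf{m})},T^{(\mathbf{m})})=0$.

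By the corollary in Section~\ref{sect7}, each $\varphi_{T^{(\mathbf{m})}}$ therefore lies in the dual semicanonical basis of $\C[N]$. Since a rigid $\La$-module has an open orbit in a unique irreducible component of its module variety, distinct isomorphism classes among the $T^{(\mathbf{m})}$ index distinct irreducible components and hence give distinct basis elements. Consequently, the family $\{\varphi_{T^{(\mathbf{m})}}\mid \mathbf{m}\in\N^n\}$ is linearly independent in $\C[N]$. Now Theorem~\ref{th61}(1) rewrites this family as $\{\varphi_{T_1}^{m_1}\cdots\varphi_{T_n}^{m_n}\mid \mathbf{m}\in\N^n\}$, and linear independence of all monomials in $\varphi_{T_1},\ldots,\varphi_{T_n}$ is exactly algebraic independence of these generators.

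To conclude, each $T^{(\mathbf{m})}$ belongs to $\Sub Q_J$, so each $\varphi_{T^{(\mathbf{m})}}$ lies in the subspace $R_J$, which by Theorem~\ref{th103} maps isomorphically onto $\C[N_K]$. Hence $\varphi_{T_1},\ldots,\varphi_{T_n}$ descend to $n$ algebraically independent regular functions on $N_K$, forcing $n\le\dim N_K$. The one nontrivial bookkeeping point, and arguably the main thing requiring care, is the claim that distinct $\mathbf{m}$ produce distinct dual semicanonical basis elements: this relies on the principle that two rigid modules whose orbits are dense in the same irreducible component of $\La_{\mathbf{d}}$ must be isomorphic, combined with the evident injectivity of $\mathbf{m}\mapsto T^{(\mathbf{m})}$ up to isomorphism. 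Once this is in hand, multiplicativity of $\varphi$ from Theorem~\ref{th61}(1) converts the basis property into the desired transcendence-degree estimate.
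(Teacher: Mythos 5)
Your argument is correct and is essentially the approach taken in \cite{GLSFour} (and, for the case $J=I$, already in \cite{GLSInv}): one shows that $\varphi_{T_1},\ldots,\varphi_{T_n}$ are algebraically independent in $\C[N_K]$ by observing, via Theorem~\ref{th61}(1), that the monomials $\varphi_{T_1}^{m_1}\cdots\varphi_{T_n}^{m_n}=\varphi_{T^{(\mathbf{m})}}$ are distinct dual semicanonical basis elements (the point that two nonisomorphic rigid modules of the same dimension vector cannot lie in the same irreducible component, since each would have a dense open orbit, is exactly the right justification), and then bounds $n$ by the transcendence degree $\dim N_K$ of $\C[N_K]$ using the isomorphism $R_J\cong\C[N_K]$ of Theorem~\ref{th103}. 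The only point worth flagging, which you handle implicitly by noting that all $\varphi_{T^{(\mathbf{m})}}$ lie in $R_J$, is that any putative polynomial relation among the restricted functions lifts to an element of $R_J$ killed by restriction, and Theorem~\ref{th103} then forces it to vanish; this is correct as written.
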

In the case when $J=I$, that is, $\Sub Q_J = \md\La$, this
result was first obtained in \cite{GS}.

We shall say that a rigid module $T$ in $\Sub Q_J$ is \emph{complete}
if it has this maximal number of nonisomorphic summands.
Note that in this case, $T$ obviously contains the $n$ indecomposable
injective objects of $\Sub Q_J$.

In order to construct explicitly some complete rigid modules, we shall 
introduce certain functors.

\section{The functors $\E_i$ and $\E_i^\dag$}

For $i\in I$, we define an endo-functor $\E_i$ of $\md\La$ as follows.
Given an object $M\in\md\La$ we define $\E_i(M)$
as the kernel of the surjection 
$$
M \to S_i^{\oplus m_i(M)},
$$
where $m_i(M)$ denotes the multiplicity
of $S_i$ in the top of $M$.
If $f\colon M \to N$ is a homomorphism,  
$f(\E_i(M))$ is contained in $\E_i(N)$, and
we define 
$$
\E_i(f)\colon \E_i(M) \to \E_i(N)
$$ 
as the restriction of $f$ to
$\E_i(M)$.
Clearly, $\E_i$ is an additive functor.
It acts on a module $M$ by removing the $S_i$-isotypical
part of its top.
Similarly, one can define a functor 
$\E^\dag_i$ acting on $M$ by removing the 
$S_i$-isotypical part of its socle.
\begin{proposition}[\cite{GLSFour}]\label{prop121}
The functors $\E_i, \E_i^\dag\ (i\in I)$ satisfy the following relations:
\begin{itemize}
\item[(i)]
$\E_i\E_i = \E_i,\quad\E_i^\dag\E_i^\dag = \E_i^\dag$. 

\item[(ii)]  
$\E_i\E_j=\E_j\E_i, \quad 
\E_i^\dag\E_j^\dag=\E_j^\dag\E_i^\dag$, if $i$ and $j$ are not connected
by an edge in $\De$.

\item[(iii)] 
$\E_i\E_j\E_i = \E_j\E_i\E_j,\quad 
\E_i^\dag\E_j^\dag\E_i^\dag = \E_j^\dag\E_i^\dag\E_j^\dag$, if $i$ and $j$ are connected
by an edge.
\end{itemize}
\end{proposition}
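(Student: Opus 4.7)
My plan is to prove (i), (ii), (iii) uniformly by reducing each to a single principle. For every subset $K\subseteq I$, let $\Ss_K\subseteq\md\La$ be the full subcategory of modules whose composition factors all lie in $\{S_v\mid v\in K\}$; since $\Ss_K$ is closed under extensions, for each $M\in\md\La$ there exists a smallest submodule $\E_K(M)\subseteq M$ with $M/\E_K(M)\in\Ss_K$. The central claim I would establish is: for any reduced expression $i_1,\ldots,i_k$ of the longest element $w_0^K$ of the Weyl group of the sub-root-system supported on $K$, one has $\E_{i_1}\cdots\E_{i_k}(M)=\E_K(M)$. Taking $K=\{i,j\}$, the identities $w_0^{\{i,j\}}=s_is_j=s_js_i$ for non-adjacent $i,j$, and $w_0^{\{i,j\}}=s_is_js_i=s_js_is_j$ for adjacent $i,j$, then yield (ii) and (iii) respectively.

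For part (i), idempotency $\E_i^2=\E_i$ reduces to $m_i(\E_i M)=0$, i.e., $\Hom_\La(\E_i M,S_i)=0$. Applying $\Hom_\La(-,S_i)$ to the defining sequence $0\to\E_i M\to M\to S_i^{m_i(M)}\to 0$ produces a six-term exact sequence in which $\Hom_\La(S_i^{m_i(M)},S_i)\to\Hom_\La(M,S_i)$ is an isomorphism (both sides are $m_i(M)$-dimensional), so $\Hom_\La(\E_i M,S_i)$ embeds into $\Ext^1_\La(S_i^{m_i(M)},S_i)$. This Ext group vanishes because $\overline Q$ has no loops at any vertex ($\Delta$ being a simple graph), giving $\Ext^1_\La(S_i,S_i)=0$. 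The statement for $\E_i^\dag$ follows by the $\C$-duality $D\colon\md\La\to\md\La^{\mathrm{op}}$, which interchanges top and socle.

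The central claim splits into two inclusions. The easy one $\E_K(M)\subseteq\E_{i_1}\cdots\E_{i_k}(M)$ holds because $M/\E_{i_1}\cdots\E_{i_k}(M)$ carries a filtration with layers $S_{i_\ell}^{?}$ for $i_\ell\in K$, hence lies in $\Ss_K$. For the reverse inclusion I would combine two ingredients. \emph{Base case:} for $T\in\Ss_K$, $\E_{i_1}\cdots\E_{i_k}(T)=0$. Such a $T$ is annihilated by every idempotent $e_v$ with $v\notin K$ and hence by every arrow of $\overline Q$ with an endpoint outside $K$, so its $\La$-action factors through the preprojective algebra $\La_K$ of the sub-Dynkin diagram on $K$. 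For $K=\{i,j\}$ adjacent, $\La_K$ is the $A_2$ preprojective algebra with only the four indecomposables $S_i,S_j,Q_i,Q_j$, and a direct case-check gives $\E_i\E_j\E_i T=0=\E_j\E_i\E_j T$ on each; the non-adjacent case is trivial since $\La_K\cong\C\times\C$ is semisimple. \emph{Compatibility step:} each $\E_{i_\ell}$ commutes with the canonical quotient $\pi\colon M\to T:=M/\E_K(M)$, in the sense that $\E_{i_\ell}$ applied at each intermediate stage equals the $\pi$-preimage of $\E_{i_\ell}$ applied to the corresponding quotient. The crucial fact is that $m_i(M)=m_i(T)$ for every $i\in K$, which follows because any $f\colon M\to S_i$ has $M/\ker(f)\hookrightarrow S_i\in\Ss_K$, forcing $\E_K(M)\subseteq\ker(f)$ so $f$ descends to $T$. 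Iterating, $\E_{i_1}\cdots\E_{i_k}(M)=\pi^{-1}(\E_{i_1}\cdots\E_{i_k}(T))=\pi^{-1}(0)=\E_K(M)$.

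The main obstacle is the inductive bookkeeping inside the compatibility step: at stage $\ell$, for any $g\colon\E_{i_{\ell+1}}\cdots\E_{i_k}(M)\to S_{i_\ell}$, one needs $\ker(g)\supseteq\E_K(M)$. This holds because $M/\ker(g)$ is an extension of $S_{i_\ell}$ by $M/\E_{i_{\ell+1}}\cdots\E_{i_k}(M)$, and both end-terms lie in $\Ss_K$ (the latter via the filtration from the base-case argument). Once this is set up carefully, the entire braid relation reduces to the elegant fact that $\E_i\E_j\E_i=0$ on every module of the type-$A_2$ preprojective algebra, neatly reflecting that the $\E_i$'s generate a categorical analogue of the $0$-Hecke monoid of $W$.
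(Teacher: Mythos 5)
The paper does not actually prove Proposition~\ref{prop121}; it cites \cite{GLSFour} and only afterwards records, as a consequence, that $\E^\dag_{w_0^K}(M)\in\Sub Q_J$ with $\E^\dag_{w_0^K}$ acting as the identity on $\Sub Q_J$. So there is no in-paper proof to compare against, and I evaluate your argument on its own terms.

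Your proof is correct, and the architecture is a natural one. Part~(i) is handled cleanly: applying $\Hom_\La(-,S_i)$ to $0\to\E_iM\to M\to S_i^{m_i(M)}\to 0$ and using that $\Hom_\La(S_i^{m_i(M)},S_i)\to\Hom_\La(M,S_i)$ is an isomorphism (both spaces have dimension $m_i(M)$) gives an embedding of $\Hom_\La(\E_iM,S_i)$ into $\Ext^1_\La(S_i,S_i)$, and the latter vanishes since $\overline Q$ has no loops and the preprojective relations are of degree two. For~(ii) and~(iii), your reduction to the single statement that $\E_{i_1}\cdots\E_{i_k}(M)$ equals the smallest submodule $\E_K(M)$ with $M/\E_K(M)\in\Ss_K$, for any reduced word of $w_0^K$, is exactly the right invariant to isolate: both sides of the braid/commutation relations then coincide with $\E_K$ for $K=\{i,j\}$. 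The two inclusions are properly established: the easy one follows from the filtration of $M/\E_{i_1}\cdots\E_{i_k}(M)$ with layers supported on $K$; the hard one uses (a)~the compatibility $\E_{i_\ell}(N)=\pi^{-1}(\E_{i_\ell}(\pi(N)))$ for $\pi:M\to M/\E_K(M)$ and each intermediate submodule $N$, which you justify correctly by showing every map $N\to S_{i_\ell}$ ($i_\ell\in K$) kills $\E_K(M)$, and (b)~the finite base-case check on $\Ss_K$, where the action factors through the sub-preprojective algebra $\La_K$; for $|K|=2$ the non-adjacent case is semisimple and the adjacent case is the four indecomposables of the $A_2$ preprojective algebra. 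Two cosmetic points: the existence of $\E_K(M)$ uses that $\Ss_K$ is a Serre subcategory (closure under subobjects and sums is what makes the family of admissible kernels intersection-closed, not just closure under extensions), and $N/\ker g$ is a \emph{submodule} of $S_{i_\ell}$ rather than $S_{i_\ell}$ itself — but neither affects the conclusion, since $\Ss_K$ is closed under subobjects. Incidentally, your framing makes transparent what the paper states only after the fact, namely that $\E_w$ for $w=w_0^K$ has a description independent of the reduced word as an exact truncation functor onto the Serre quotient by $\Ss_K$; the paper's logical flow runs in the opposite direction.
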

Relations (ii) and (iii) are the braid relations for $\De$.
It follows that for any element $w$ of the Weyl group $W$ of $G$,
we have well-defined functors 
\[
\E_w:=\E_{i_1}\cdots\E_{i_k},\qquad \E^\dag_w:=\E^\dag_{i_1}\cdots\E^\dag_{i_k},
\]
where $w=s_{i_1}\cdots s_{i_k}$ is an arbitrary reduced 
decomposition of $w$.

Consider now the parabolic subgroup $W_K$ of $W$ generated by the 
$s_k\ (k\in K)$. This is a finite Coxeter group.
Let $w_0^K$ denote its longest element.
One can check that
$\E_{w_0^K}^\dag(M) \in \Sub Q_J$
for every $M\in \md\La$, and that $\E_{w_0^K}^\dag(M)=M$ 
if $M\in\Sub Q_J$.
In other words, the subcategory $\Sub Q_J$ can be described as
the image of $\md\La$ under the endo-functor $\E_{w_0^K}^\dag$ 
\cite{GLSFour}. 

\section{Construction of complete rigid modules}
\label{sect13}

The relevance of these functors for constructing rigid modules
comes from the following property.
\begin{proposition}[\cite{GLSFour}]
The functors $\E_w$ and $\E^\dag_w$ preserve rigid modules,
\emph{i.e.} if $M$ is rigid then $\E_w(M)$ and $\E^\dag_w(M)$
are also rigid.
\end{proposition}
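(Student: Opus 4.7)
By Proposition~\ref{prop121}, the functors $\E_w$ and $\E^\dag_w$ are well-defined up to the choice of a reduced expression and factor as compositions of the generators $\E_i$ and $\E^\dag_i$. An induction on $\ell(w)$ therefore reduces the claim to showing that each single functor $\E_i$ and $\E^\dag_i$ preserves rigidity. The case of $\E^\dag_i$ is dual to that of $\E_i$, either via the $\C$-linear duality $D$ (which preserves rigidity by the $\Ext$-symmetry recalled in Section~3) or by a symmetric argument with the socle in place of the top, so I will focus on $\E_i$.

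Let $M$ be rigid and set $N := \E_i(M)$ and $S := S_i^{\oplus m_i(M)}$; by construction $N$ fits into the short exact sequence
\[
0 \to N \to M \xrightarrow{\pi} S \to 0,
\]
and the top of $N$ has no summand isomorphic to $S_i$, so $\Hom_\La(N,S)=0$. Applying $\Hom_\La(N,-)$ to this sequence gives the embedding
\[
\Ext^1_\La(N,N) \hookrightarrow \Ext^1_\La(N,M),
\]
and the preprojective $\Ext$-symmetry identifies the right-hand side with $D\Ext^1_\La(M,N)$. The problem therefore reduces to showing that $\Ext^1_\La(M,N)=0$. Applying $\Hom_\La(M,-)$ to the same sequence and invoking the hypothesis $\Ext^1_\La(M,M)=0$, this vanishing is equivalent to the surjectivity of the natural map
\[
\rho\colon \End_\La(M) \longrightarrow \Hom_\La(M,S), \qquad f \longmapsto \pi\circ f.
\]

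The bulk of the argument is the verification that $\rho$ is surjective, and here the rigidity of $M$ is used essentially. Decomposing $M=\bigoplus_k M_k^{\oplus n_k}$ into pairwise non-isomorphic indecomposable summands, one has $\End_\La(M)/\mathrm{rad} \cong \prod_k \mathrm{Mat}_{n_k}(\C)$, and the plan is to combine this with $\Ext^1_\La(M,M)=0$ to lift an arbitrary map $M \to S_i^{\oplus m_i(M)}$ to an endomorphism of $M$ that implements the prescribed action on the $S_i$-isotypical component of the top of $M$. The main obstacle is constructing these lifts in general: the reduction steps leading to $\rho$ are routine diagram chasing combined with the preprojective $\Ext$-symmetry, whereas the surjectivity of $\rho$ requires a genuine interplay between rigidity and the module-theoretic structure of the $S_i$-isotypical part of $M$, for which the natural tool is the $2$-Calabi--Yau structure on the stable category $\underline{\md\,\La}$ that refines the $\Ext$-symmetry and controls the higher $\Ext$-terms arising from dualising the defining exchange sequence.
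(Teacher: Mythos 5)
The setup and the first reduction steps are sound, but the argument hinges on a claim that is actually false. You reduce the rigidity of $N=\E_i(M)$ to the statement $\Ext^1_\La(M,N)=0$, equivalently to the surjectivity of $\rho\colon\End_\La(M)\to\Hom_\La(M,S_i^{\oplus m_i(M)})$. This is indeed a \emph{sufficient} condition, since the embedding $\Ext^1_\La(N,N)\hookrightarrow\Ext^1_\La(N,M)\cong D\Ext^1_\La(M,N)$ is correct; but it is far from necessary, and it already fails in the smallest nontrivial case. Take $\La$ of type $A_2$ and $M=S_1\oplus Q_2$. Since $Q_2$ is projective-injective and $\Ext^1_\La(S_1,S_1)=0$, the module $M$ is rigid. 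Its top is $S_1\oplus S_1$, so $N=\E_1(M)=\operatorname{rad}(Q_2)=S_2$, which is rigid. Nevertheless
\[
\Ext^1_\La(M,N)=\Ext^1_\La(S_1,S_2)\oplus\Ext^1_\La(Q_2,S_2)=\Ext^1_\La(S_1,S_2)\cong\C\neq 0,
\]
witnessed by $0\to S_2\to Q_2\to S_1\to 0$. Correspondingly $\rho$ is not surjective: here $\dim\End_\La(M)=3<4=\dim\Hom_\La(M,S_1^{\oplus 2})$, and the obstruction is precisely $\Hom_\La(S_1,Q_2)=0$. So the step you identify as ``the bulk of the argument'' is not merely hard to prove --- it is false, and the plan of establishing surjectivity of $\rho$ (with or without $2$-Calabi--Yau input) cannot succeed.

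What is actually needed is to control the image of the embedding $\Ext^1_\La(N,N)\hookrightarrow\Ext^1_\La(N,M)$ rather than to kill the whole target. Concretely, one must show that every self-extension $0\to N\to E\to N\to 0$ becomes split after being pushed out along the inclusion $N\hookrightarrow M$, even though $\Ext^1_\La(M,N)$ and $\Ext^1_\La(S_i,N)$ may individually be nonzero. This is a genuinely finer statement than $\Ext^1_\La(M,N)=0$, and it requires an idea that your proposal does not supply; the reductions are correct up to that point, but they stop short of the real content of the proposition.
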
 

Let $w_0$ be the longest element of $W$, and let
$w_0=s_{i_1}\cdots s_{i_r}$ 
be a reduced decomposition
such that the first $r_K$ factors form a reduced decomposition
of $w_0^K$.
Set
\[
u_{\le p}=s_{i_1}\cdots s_{i_p}, \qquad
M_p=\E_{u_{\le p}}^\dag(Q_{i_p}), \qquad (p=1,\ldots,r).
\]
For $k\in K$, let $q_k = \max\{q\le r_K \mid i_q =k\}$.
Finally, define 
\[
T = M_{r_K+1}\oplus M_{r_K+2}\oplus \cdots \oplus M_r 
\oplus \left(\oplus_{k\in K} M_{q_k}\right)\oplus Q_J.
\]
\begin{theorem}[\cite{GLSFour}]\label{th123}
$T$ is a complete rigid module in $\Sub Q_J$.
\end{theorem}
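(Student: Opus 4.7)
The plan is to verify three properties of $T$: that it lies in $\Sub Q_J$, that it is rigid, and that the number of pairwise non-isomorphic indecomposable summands equals $\dim N_K$, matching the upper bound of Theorem~\ref{th112}. Membership in $\Sub Q_J$ is checked summand by summand. The $Q_j$ for $j\in J$ are tautologically in $\Sub Q_J$. For $M_p$ with $p>r_K$, the hypothesis on the reduced word gives a reduced factorization $u_{\le p}=w_0^K\cdot v$, so $\E^\dag_{u_{\le p}}=\E^\dag_{w_0^K}\circ\E^\dag_v$ and $M_p$ lies in $\E^\dag_{w_0^K}(\md\La)=\Sub Q_J$ (see Section~\ref{sect13}). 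For $M_{q_k}$ with $k\in K$, maximality of $q_k$ forces $i_q\ne k$ for all $q_k<q\le r_K$; since $Q_k$ has socle $S_k$, each such $\E^\dag_{i_q}$ fixes $Q_k$, so $\E^\dag_{w_0^K}(Q_k)=\E^\dag_{u_{\le q_k}}(Q_k)=M_{q_k}$ lies in $\Sub Q_J$ as well.

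For rigidity, I would start from the fact (established in \cite{GLSInv}) that the direct sum $V_\ii:=\bigoplus_{p=1}^r M_p\oplus\bigoplus_{i\in I}Q_i$ is a complete rigid module in $\md\La$, with $r$ pairwise non-isomorphic indecomposable summands---some of the $M_p$ may vanish, but the $Q_i$ then fill the gap. Applying the rigidity-preserving functor $\E^\dag_{w_0^K}$ (Section~\ref{sect13}) produces a rigid module in $\Sub Q_J$, whose summands one identifies as follows: for $p>r_K$ one has $M_p\in\Sub Q_J$ fixed by $\E^\dag_{w_0^K}$; for $j\in J$ one has $Q_j$ fixed as well; for $p\le r_K$ and for the injective $Q_k$ with $k\in K$, the summand is sent to $M_{q_{i_p}}$, respectively $M_{q_k}$. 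This last identification rests on the identity $\E^\dag_{w_0^K}\circ\E^\dag_{u_{\le p}}=\E^\dag_{w_0^K}$ whenever $u_{\le p}\in W_K$, which follows from Proposition~\ref{prop121}: the relations (idempotency and the braid relations) are exactly those defining the $0$-Hecke monoid of $W$, so the composition of functors corresponds to the Demazure product $w_0^K\ast u_{\le p}$, and this equals $w_0^K$ because $w_0^K$ is the longest element of $W_K$ and $u_{\le p}\in W_K$. Consequently, $\E^\dag_{w_0^K}(V_\ii)$ has as its set of pairwise non-isomorphic nonzero summands exactly $\{M_p:p>r_K\}\cup\{M_{q_k}:k\in K\}\cup\{Q_j:j\in J\}$, which is the underlying set of summands of $T$; rigidity of $T$ follows.

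For the count of non-isomorphic summands, Theorem~\ref{th112} gives the upper bound $\dim N_K$; a short combinatorial verification establishes equality, accounting for the fact that precisely $n$ of the $M_p$ with $p>r_K$ must vanish (these being the ones coming from last occurrences in the reduced word, where iterated application of the $\E^\dag_{i_q}$ annihilates $Q_{i_p}$), so that the total count is $(r-r_K+|K|+|J|)-n=r-r_K=\dim N_K$, and pairwise non-isomorphism of the three families is detected via dimension vectors or socle types. The main obstacle in this argument is the Demazure-product identity $\E^\dag_{w_0^K}\circ\E^\dag_{u_{\le p}}=\E^\dag_{w_0^K}$ for $u_{\le p}\in W_K$; once this is in hand, together with the action of $\E^\dag_{w_0^K}$ on the injectives $Q_k$ for $k\in K$ already computed in step one, the identification of $\E^\dag_{w_0^K}(V_\ii)$ with $T$ is automatic and both rigidity and the correct count are immediate consequences.
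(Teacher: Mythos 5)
Your proposal is correct, and it follows what is essentially the approach of \cite{GLSFour}: one bootstraps from the $J=I$ case (where $T$ reduces to the known complete rigid module $V_\ii=\bigoplus_p M_p\oplus\bigoplus_i Q_i$ of $\md\La$ coming from the reduced word, established in \cite{GLSLMS,GLSInv}), applies the rigidity-preserving functor $\E^\dag_{w_0^K}$, and identifies the nonzero indecomposable summands of $\E^\dag_{w_0^K}(V_\ii)$ with those of $T$ via the $0$-Hecke-monoid (Demazure product) identity $\E^\dag_{w_0^K}\circ\E^\dag_u=\E^\dag_{w_0^K}$ for $u\in W_K$, which you correctly derive from Proposition~\ref{prop121}. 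Two small points where your write-up is a bit terse but the claims are true: (a) pairwise non-isomorphism of the summands of $T$ is immediate since they are all among the $r$ pairwise non-isomorphic nonzero summands of $V_\ii$ (no need for a separate socle-type argument); and (b) the assertion that all $n$ last occurrences $t_l$ lie strictly beyond $r_K$ does need a one-line justification, e.g. by noting that if $(w_0^K)^{-1}w_0$ omitted some $s_l$ then $\varpi_l-w_0\varpi_l=\varpi_l-w_0^K\varpi_l$ would lie in $\bigoplus_{k\in K}\Z\alpha_k$, contradicting that $\varpi_l+\varpi_{\sigma(l)}$ has strictly positive coefficients on every simple root.
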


Note that by construction the modules $M_p$ with $p>r_K$
are in the image of the functor $\E^\dag_{w_0^K}$, hence in 
$\Sub Q_J$.
Note also that $M_{q_k}=\E^\dag_{w_0^K}(Q_k)$ for $k\in K$.
The modules $M_{q_k}$ together with the summands of $Q_J$
are the indecomposable injectives of $\Sub Q_J$.
Finally, if $t_l = \max\{t\le r \mid i_t =l\}$,
then $M_{t_l}= \E_{w_0}^\dag(Q_l)=0$ for every $l\in I$.
It follows that $T$ has in fact 
$
r-r_K+|K|+|J|-|I| = r-r_K = \dim N_K
$
indecomposable summands, in agreement with Theorem~\ref{th112}.

\begin{example}\label{ex133}
{\rm
Type $D_4$. We take again $K=\{1,2,3\},\ J=\{4\}$ (remember that
the central node of $\De$ is labelled by 3).
Here $r_K=6$.
We choose the reduced decomposition
\[
w_0 = s_1s_3s_1s_2s_3s_1s_4s_3s_1s_2s_3s_4.
\]
We then have
\[
M_{s_2}=M_4 =
\begin{tabular}{c}
$S_2$\\
\hline
$S_3$\\
\hline
$S_4$
\end{tabular}
,\quad
M_{s_3}=M_5 =
\begin{tabular}{c}
$S_3$\\
\hline
$S_1 \oplus S_2$\\
\hline
$S_3$\\
\hline
$S_4 \oplus S_4$
\end{tabular}
,\quad
M_{s_1}=M_6 =
\begin{tabular}{c}
$S_1$\\
\hline
$S_3$\\
\hline
$S_4$
\end{tabular}
,\quad
\] 
\[
M_7 = S_4
,\quad
M_8 =
\begin{tabular}{c}
$S_3$\\
\hline
$S_4$
\end{tabular}
,\quad
M_9=M_{10}=M_{11}=M_{12}=0.
\]
The module $T=M_4\oplus M_5\oplus M_6\oplus M_7\oplus M_8\oplus Q_4$
is complete rigid in $\Sub Q_4$.
}
\end{example}

\section{Cluster algebras of geometric type}
\label{newsect}

Our next aim will be to associate to the category
$\Sub Q_J$ certain cluster algebras of geometric
type. We refer the reader to \cite{FZ1,FZ2,BFZ} for a
detailed exposition of their properties and of
the motivating examples of coordinate rings of double
Bruhat cells.
Here we shall merely recall their definition.

Let $d$ and $n$ be integers with $d\ge n\ge 0$.
If $B = (b_{ij})$ is any $d \times (d-n)$-matrix
with integer entries, then
the {\it principal part} of
$B$ is obtained 
by deleting from $B$ the last $n$ rows.
Given some $k \in [1,d-n]$ define a new $d \times (d-n)$-matrix 
$\mu_k(B) = (b_{ij}')$ by
$$
b_{ij}' =
\begin{cases}
-b_{ij} & \text{if $i=k$ or $j=k$},\\
b_{ij} + \dfrac{|b_{ik}|b_{kj} + b_{ik}|b_{kj}|}{2} & \text{otherwise},
\end{cases}
$$
where $i \in [1,d]$ and $j \in [1,d-n]$.
One calls $\mu_k(B)$ the \emph{mutation} 
of $B$ in direction~$k$.
If $B$ is an integer matrix whose principal part is
skew-symmetric, then it is 
easy to check that $\mu_k(B)$ is also an integer matrix 
with skew-symmetric principal part.
In this case, Fomin and Zelevinsky define a cluster algebra
${\mathcal A}(B)$ as follows.
Let ${\mathcal F} = \C(y_1,\ldots,y_d)$ be the field of rational
functions in $d$ commuting variables 
$\yy = (y_1,\ldots,y_d)$. 
One calls $(\yy,B)$ the {\it initial seed} 
of
${\mathcal A}(B)$.
For $1 \le k \le d-n$ define 
\begin{equation}\label{mutationformula}
y_k^* = 
\frac{\prod_{b_{ik}> 0} y_i^{b_{ik}} + \prod_{b_{ik}< 0} y_i^{-b_{ik}}}{y_k}.
\end{equation}
Let $\mu_k(\yy)$ denote the $d$-tuple obtained from $\yy$ by replacing 
$y_k$ by $y_k^*$. 
The pair 
$
(\mu_k(\yy),\mu_k(B))
$ 
is the \emph{mutation of the seed}
$(\yy,B)$ in direction $k$. 

Now one can iterate this process and mutate again each seed
$
(\mu_k(\yy),\mu_k(B))
$ 
in $d-n$ directions.
The collection of all seeds obtained from the initial seed
$(\yy,B)$ via a finite sequence of mutations is called the
\emph{mutation class} of $(\yy,B)$.
Each seed in this class consists of a $d$-tuple of algebraically 
independent elements of ${\mathcal F}$
called a 
\emph{cluster} 
and of a matrix called the 
\emph{exchange matrix}.
The elements of a cluster are its 
\emph{cluster variables}.
One does not mutate the last $n$ elements of a cluster. 
They are called
\emph{coefficients} and belong to every cluster.
The \emph{cluster algebra}
$
{\mathcal A}(B)
$
is by definition the subalgebra of ${\mathcal F}$ generated by the
set of all the cluster variables appearing in a seed
of the mutation class of $(\yy, B)$.
The subring of ${\mathcal A}(B)$ generated by the coefficients
is called the \emph{coefficient ring}.
The integer $d-n$ is called the \emph{rank} of ${\mathcal A}(B)$.
A monomial in the cluster variables is called a \emph{cluster
monomial} if all its variables belong to a single cluster.

\begin{example}
{\rm
Take $d=7$ and $n=5$. 
Let 
\[
B
=
\left[
\begin{matrix}
0&-1\\
1&0\\
-1&0\\
1&0\\
-1&1\\
0&-1\\
0&1
\end{matrix}
\right].
\]
Then the mutation in direction 1 reads
\[
\mu_1(B)
=
\left[
\begin{matrix}
0&1\\
-1&0\\
1&-1\\
-1&0\\
1&0\\
0&-1\\
0&1
\end{matrix}
\right],
\qquad
y_1^*=\frac{y_2y_4+y_3y_5}{y_1}.
\]
In this simple example, it turns out that ${\mathcal A}(B)$ 
has only a finite number of cluster variables.
In fact ${\mathcal A}(B)$ is isomorphic to the homogeneous
coordinate ring of the Grassmannian ${\rm Gr}(2,5)$ of 
$2$-planes of $\C^5$ \cite[\S 12]{FZ2}. The explicit isomorphism maps
the cluster variables $y_1, \ldots, y_7$ to the following
Pl\"ucker coordinates:
\[
y_1\mapsto [1,3],\,
y_2\mapsto [1,4],\,
y_3\mapsto [1,2],\,
y_4\mapsto [2,3],\,
y_5\mapsto [3,4],\,
y_6\mapsto [4,5],\,
y_7\mapsto [1,5].
\]
The other cluster variables obtained by mutation from this
initial seed are the remaining Pl\"ucker coordinates
$[2,4],\,[2,5],\,[3,5]$.
}
\end{example}

\section{Mutation of complete rigid modules}

We shall now introduce an operation of mutation for
complete rigid modules in $\Sub Q_J$, inspired by
the cluster algebra mutation of Fomin and Zelevinsky.

Let $T=T_1\oplus \cdots \oplus T_d$ be an arbitrary 
basic complete rigid module in $\Sub Q_J$.
Thus the $T_i$'s are indecomposable and pairwise non isomorphic,
and 
\[
d=r-r_K=\ell(w_0)-\ell(w_0^K)=\ell(w_0^Kw_0).
\]
Assume that the injective summands of $T$ are the last
$n$ ones, namely $T_{d-n+1}, \ldots, T_d$.
Relying on the results of \cite{GLSInv} we show in
\cite{GLSFour} the following

\begin{theorem}\label{th141}
Let $k\le d-n$.
There exists a unique indecomposable module $T_k^*\not\cong T_k$ in 
$\Sub Q_J$ such that $(T/T_k)\oplus T_k^*$ is a basic complete rigid 
module in $\Sub Q_J$.
Moreover, $\dim \Ext^1_\La(T_k,T_k^*) = 1$ and if 
\[
0 \to T_k \stackrel{g}{\to} X_k \stackrel{f}{\to} T^*_k \to 0,\qquad
0 \to T_k^* \stackrel{i}{\to} Y_k \stackrel{h}{\to} T_k \to 0
\]
are the unique non-split short exact sequences between $T_k$ and $T_k^*$,
then $f, g, h, i$ are minimal $\add(T/T_k)$-approximations,
and $X_k$ and $Y_k$ have no isomorphic indecomposable summands.
\end{theorem}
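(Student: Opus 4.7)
The plan is to adapt to the Frobenius category $\Sub Q_J$ the mutation theory for cluster-tilting objects in $2$-Calabi--Yau triangulated categories, exploiting the Ext-symmetry~(1) of Section~3 together with the maximality statement in Theorem~\ref{th112}. Set $T' := \bigoplus_{i \neq k} T_i$, so $T = T' \oplus T_k$. Since $T'$ has only finitely many indecomposable summands, $\add(T')$ is functorially finite in $\Sub Q_J$, so there exist minimal right and minimal left $\add(T')$-approximations
\[
h\colon Y_k \longrightarrow T_k, \qquad g\colon T_k \longrightarrow X_k.
\]
First I would show that $h$ is surjective and $g$ is injective. By hypothesis $T'$ contains all $n$ indecomposable injectives of the Frobenius category $\Sub Q_J$ (which coincide with the projectives); hence the projective cover of $T_k$ lies in $\add(T')$ and already maps onto $T_k$, forcing $h$ to be surjective. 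Dually, using the Ext-symmetry, an injective envelope of $T_k$ in $\Sub Q_J$ lies in $\add(T')$, forcing $g$ to be injective. Define $T_k^* := \Ker h$; since $\Sub Q_J$ is closed under submodules, $T_k^* \in \Sub Q_J$, and analogously the cokernel of $g$ lies in $\Sub Q_J$.

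Next I would establish that $T' \oplus T_k^*$ is rigid and that $T_k^*$ is indecomposable with $T_k^* \not\cong T_k$. Applying $\Hom_\La(T', -)$ to the short exact sequence $0 \to T_k^* \to Y_k \to T_k \to 0$, the middle term's extension groups vanish by rigidity of $T$, yielding $\Ext^1_\La(T', T_k^*) = 0$; the Ext-symmetry~(1) then gives $\Ext^1_\La(T_k^*, T') = 0$. The minimality of $h$ forces $T_k^*$ to have no indecomposable summand in $\add(T')$, whence applying $\Hom_\La(-, T_k^*)$ to the same sequence gives $\Ext^1_\La(T_k^*, T_k^*) = 0$. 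If $T_k^*$ decomposed as $A \oplus B$ with both summands nonzero, then $T' \oplus A \oplus B$ would be a rigid module in $\Sub Q_J$ with $d + 1$ non-isomorphic indecomposable summands, contradicting Theorem~\ref{th112}; thus $T_k^*$ is indecomposable. The inequality $T_k^* \not\cong T_k$ is automatic since $T_k \not\in \add(Y_k) \subseteq \add(T')$. The dual construction with $g$ produces an indecomposable with the same defining property, and the Ext-symmetry matches the two extension classes, giving $X_k / g(T_k) \cong T_k^*$.

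Finally, for the dimension count and uniqueness: the two exchange sequences define nonzero classes in $\Ext^1_\La(T_k, T_k^*)$ and $\Ext^1_\La(T_k^*, T_k)$ respectively, and these spaces are dual by~(1). If $\dim \Ext^1_\La(T_k, T_k^*) \ge 2$, one could produce a non-split extension not isomorphic to the one defining $Y_k$, whose middle term would yield yet another complete rigid complement of $T_k$, again contradicting Theorem~\ref{th112}; hence the dimension equals $1$. Uniqueness of $T_k^*$ follows: any other indecomposable $U \not\cong T_k$ with $T' \oplus U$ complete rigid must have $\Ext^1_\La(T_k, U) \neq 0$ (else $T \oplus U$ would violate Theorem~\ref{th112}), and then $U$ is the middle term of the (unique up to isomorphism) non-split extension classified by the $1$-dimensional $\Ext^1_\La(T_k, T_k^*)$, so $U \cong T_k^*$. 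The absence of a common indecomposable summand of $X_k$ and $Y_k$ is the standard consequence of minimality: any common summand $Z \in \add(T')$ would yield, via the split inclusion $Z \hookrightarrow Y_k$ composed with $h$ and the split projection $X_k \twoheadrightarrow Z$ precomposed with $g$, a contradiction to the minimality of either $h$ or $g$. The main obstacle is the careful verification that both minimal approximations are exact (surjective, resp.\ injective) as honest short exact sequences in $\md\La$ and that they share the same end terms $T_k^*$; this is precisely where the Frobenius and $2$-Calabi--Yau structure of $\Sub Q_J$ is indispensable.
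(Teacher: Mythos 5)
Your overall strategy---running the Iyama--Yoshino mutation argument inside the Frobenius category $\Sub Q_J$, using the $\Ext$-symmetry, the injectivity/projectivity of the summands of $Q_J$, and the bound of Theorem~\ref{th112}---is indeed the right one, and several of the easier steps (injectivity of $g$, surjectivity of $h$, the vanishing $\Ext^1_\La(T',T_k^*)=0$, indecomposability of $T_k^*$, absence of common summands of $X_k$ and $Y_k$) are argued correctly. But there are genuine gaps in the hard part of the theorem. First, the step ``applying $\Hom_\La(-,T_k^*)$ to the same sequence gives $\Ext^1_\La(T_k^*,T_k^*)=0$'' does not follow: that long exact sequence only yields an injection $\Ext^1_\La(T_k^*,T_k^*)\hookrightarrow\Ext^2_\La(T_k,T_k^*)$, which is not zero in general. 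What you actually need is to apply $\Hom_\La(T_k^*,-)$ and show that every map $T_k^*\to T_k$ factors through $h$; this works, but requires combining the surjectivity of $\Hom_\La(Y_k,T_k)\to\Hom_\La(T_k^*,T_k)$ (coming from $\Ext^1_\La(T_k,T_k)=0$) with the approximation property of $h$ applied to maps $Y_k\to T_k$. So that piece is fixable but not established as written.

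The more serious problems are with the identification $\operatorname{coker} g\cong\ker h$, the uniqueness of the complement, and the claim $\dim\Ext^1_\La(T_k,T_k^*)=1$. Your sentence ``the Ext-symmetry matches the two extension classes, giving $X_k/g(T_k)\cong T_k^*$'' is not an argument: the duality only shows both classes are nonzero, not that the two syzygy/cosyzygy objects coincide, and the honest proof of this either goes through the stable $2$-Calabi--Yau category or through uniqueness, which you haven't proved yet. Your uniqueness paragraph then confuses the exchange partner with the middle term: from $\Ext^1_\La(T_k,U)\neq 0$ you cannot conclude ``$U$ is the middle term of the non-split extension classified by $\Ext^1_\La(T_k,T_k^*)$''---the middle term is $Y_k$, not $U$, and these are different $\Ext$-spaces unless one already knows $U\cong T_k^*$. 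Similarly, the dimension claim is asserted on the grounds that a second non-proportional extension class ``would yield yet another complete rigid complement,'' but the middle term of such an extension is not a complement of $T'$, so Theorem~\ref{th112} does not apply to it. All three of these claims are exactly where the deeper structural input is needed, namely that a complete rigid module in $\Sub Q_J$ is not merely rigid with the maximal number of summands but is maximal $1$-orthogonal (cluster-tilting): $\add T=\{M\in\Sub Q_J : \Ext^1_\La(T,M)=0\}$. This stronger fact, which is the key result from \cite{GLSInv} that the survey is explicitly relying on, is never invoked in your argument, and the bound of Theorem~\ref{th112} alone is not enough to run the Iyama--Yoshino uniqueness and one-dimensionality arguments.
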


In this situation, we say that 
$(T/T_k)\oplus T_k^*$ is the {\em mutation of $T$ in the
direction of $T_k$}, and we write $\mu_k(T)=(T/T_k)\oplus T_k^*$.
Since $X_k$ and $Y_k$ belong to $\add(T)$, we can describe this
mutation by means of the multiplicities of each indecomposable
summand of $T$ in $X_k$ and $Y_k$.
This leads to associate to $T$ a matrix of integers
called its exchange matrix encoding the mutations of $T$ in
all possible $d-n$ directions.

More precisely, define
$b_{ik}=[X_k : T_i]$ if $X_k$ has summands isomorphic to
$T_i$, $b_{ik}=-[Y_k : T_i]$ if $Y_k$ has summands isomorphic to
$T_i$, and $b_{ik}=0$ otherwise. 
Note that these conditions are disjoint because
$X_k$ and $Y_k$ have no isomorphic direct summands.
The 
$d\times (d-n)$ matrix 
$B(T)=[b_{ik}]$
is called the \emph{exchange matrix} of~$T$.
We can now state:
\begin{theorem}[\cite{GLSInv,GLSFour}]
Let $T=T_1\oplus \cdots \oplus T_d$ be a complete rigid module
in $\Sub Q_J$ as above, and let $k\le d-n$. Then 
\[
B(\mu_k(T)) = \mu_k(B(T)),
\]
where on the right-hand side $\mu_k$ stands for the Fomin-Zelevinsky
matrix mutation. 
\end{theorem}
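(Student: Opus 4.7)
The plan is to verify $B(\mu_k(T)) = \mu_k(B(T))$ column by column; fix $k \le d-n$ and write $B = B(T)$, $B' = B(\mu_k(T))$.

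\emph{Column $k$.} Apply Theorem~\ref{th141} to $\mu_k(T)$ at the summand $T_k^*$. The uniqueness clause forces $T_k$ to be the mutation partner of $T_k^*$ again, and identifies the two exchange sequences at $T_k^*$ in $\mu_k(T)$ with the two sequences at $T_k$ in $T$, with their roles swapped. Consequently $X_k(\mu_k(T)) = Y_k(T)$ and $Y_k(\mu_k(T)) = X_k(T)$, giving $b'_{ik} = -b_{ik}$ for every $i$, which matches the Fomin--Zelevinsky rule in column $k$.

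\emph{Columns $j \ne k$.} Write the exchange sequences of $T_j$ in $T$ as
\[
0 \to T_j \to X_j \to T_j^* \to 0, \qquad 0 \to T_j^* \to Y_j \to T_j \to 0,
\]
and split off the $T_k$-parts: $X_j = T_k^a \oplus X_j''$, $Y_j = T_k^b \oplus Y_j''$, with $X_j''$, $Y_j''$ free of $T_k$-summands. The no-common-summand clause of Theorem~\ref{th141} yields $\min(a,b)=0$, so $b_{kj}=a-b$. Assume $a>0$ (the case $b>0$ is symmetric; $a=b=0$ is trivial). Let $\iota\colon T_k \hookrightarrow X_k$ denote the left map in the first exchange sequence at $T_k$, and form the pushout of the inclusion $T_k^a \hookrightarrow X_j$ along $\iota^a\colon T_k^a \hookrightarrow X_k^a$. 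A direct diagram chase identifies this pushout with $X_k^a \oplus X_j''$ and produces a non-split short exact sequence
\[
0 \to T_j \to X_k^a \oplus X_j'' \to T_j^{\#} \to 0
\]
in $\Sub Q_J$, where $T_j^\#$ is an extension of $T_j^*$ by $(T_k^*)^a$. To identify this with the exchange sequence in $\mu_k(T)$, I check that $(\mu_k(T)/T_j)\oplus T_j^\#$ is basic complete rigid; the uniqueness in Theorem~\ref{th141} then forces $T_j^\#$ to be indecomposable and to coincide with the true mutation partner of $T_j$ in $\mu_k(T)$. Since $b=0$, the second exchange sequence of $T_j$ already lies in $\add(\mu_k(T)/T_j)$ and carries over unchanged.

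Reading off multiplicities from the new middle terms gives
\[
b'_{ij} = [X_k^a \oplus X_j'' : T_i] - [Y_j'' : T_i], \qquad (i\ne k),
\]
and a short case analysis on the signs of $b_{ik}$ and $b_{kj}$, combined with the skew-symmetry of the principal part of $B$ (a consequence of the 2-Calabi--Yau property of the stable category of $\Sub Q_J$), matches this with $\mu_k(B)_{ij}$. The main obstacle is the splicing step: showing that the pushout is the genuine middle term of the new exchange sequence requires knowing that $T_j^\#$ is indecomposable and that $\dim \Ext^1_\La(T_j^\#,T_j)=1$, so that the non-split extension class is unique and no further reduction is needed. Both facts rest on the structural results on exchange sequences and 2-Calabi--Yau duality established in \cite{GLSInv}, which is the technical engine behind the theorem.
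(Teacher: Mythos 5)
The survey states this theorem without proof, citing \cite{GLSInv,GLSFour}, so there is no in-text argument to compare against. Your pushout strategy is a natural one and the core computation is sound: the column-$k$ analysis is correct, and pushing the first exchange sequence of $T_j$ out along $\iota^a\colon T_k^a\hookrightarrow X_k^a$ indeed produces a non-split short exact sequence with middle term $X_k^a\oplus X_j''$ and with cokernel $T_j^\#$ sitting in an extension $0\to T_j^*\to T_j^\#\to(T_k^*)^a\to 0$. However, the assertion that when $b=0$ the second exchange sequence ``carries over unchanged'' is false. The mutation partner of $T_j$ inside $\mu_k(T)$ is $T_j^\#$, not $T_j^*$, so the second exchange sequence must be $0\to T_j^\#\to Y_j'\to T_j\to 0$; pushing out $0\to T_j^*\to Y_j\to T_j\to 0$ along $T_j^*\hookrightarrow T_j^\#$ gives $Y_j'\cong Y_j\oplus(T_k^*)^a$, not $Y_j$. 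You are saved by the fact that the extra $(T_k^*)^a$ only enters the row indexed by $T_k^*$, which you dispatch by skew-symmetry, so the formula you write for $b'_{ij}$ with $i\ne k$ is numerically right despite the incorrect justification.

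The more substantial problem is the step you label ``the main obstacle.'' Showing that $T_j^\#$ is indecomposable and that $(\mu_k(T)/T_j)\oplus T_j^\#$ is basic complete rigid, so that the pushout sequence really is the new exchange sequence of Theorem~\ref{th141}, is the entire content of the theorem, not a technical side condition. Deferring it to ``structural results \ldots\ established in \cite{GLSInv}'' is effectively circular, since \cite{GLSInv} is precisely where this matrix-mutation statement is proved. To make the argument self-contained modulo Theorem~\ref{th141}, one must actually verify $\Ext^1_\La(T_j^\#,U)=0$ for every indecomposable summand $U$ of $\mu_k(T)/T_j$ and $\Ext^1_\La(T_j^\#,T_j^\#)=0$, for instance by applying $\Hom_\La(-,U)$ to the pushout sequence and to $0\to T_j^*\to T_j^\#\to(T_k^*)^a\to 0$ and invoking the $\add(T/T_j)$- and $\add(T/T_k)$-approximation properties of Theorem~\ref{th141} together with the functorial $\Ext$-symmetry. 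As it stands the proposal is a correct outline of the shape of the proof, but the rigidity verification that makes the splice legitimate is left untouched.
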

In other words, our mutation of complete rigid modules induces at the
level of exchange matrices the Fomin-Zelevinsky matrix mutation.

\section{Cluster algebra structure on $\C[N_K{]}$}
\label{sect15}

Let $T$ be one of the complete rigid modules of \S\ref{sect13}. 
Consider the mutation class $\R$ of $T$, that is, the
set of all complete rigid modules of $\Sub Q_J$ which can be obtained
from $T$ by a finite sequence of mutations. One can show that $\R$
contains all the rigid modules of \S\ref{sect13} corresponding to all possible
choices of a reduced decomposition of $w_0^K w_0$, hence $\R$ does not
depend on the choice of $T$.

We can now project $\R$
on $R_J\cong\C[N_K]$ using the map $M\mapsto \varphi_M$.
More precisely, for
$U=U_1\oplus \cdots \oplus U_d\in\R$,
let $x(U)=(\varphi_{U_1},\ldots,\varphi_{U_d})$,
(where again $d=r-r_K$).
The next result
follows from Theorem~\ref{th141} and Theorem~\ref{th61}.

\begin{theorem}
\begin{itemize}
\item[(i)]
$\{x(U)\mid U\in \R\}$ is the set of clusters
of a cluster algebra ${\cal A}_J\subseteq R_J\cong\C[N_K]$
of rank $d-n$.
\item[(ii)]
The coefficient ring of ${\cal A}_J$ is the
ring of polynomials in the $n$ variables $\varphi_{L_i}\ (i\in I)$,
where the $L_i$ are the indecomposable injective objects
in $\Sub Q_J$.
\item[(iii)]
All the cluster monomials belong to the dual semicanonical
basis of $\C[N]$, and are thus linearly independent.
\end{itemize}
\end{theorem}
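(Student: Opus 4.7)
The plan is to transfer the representation-theoretic mutation of Theorem~\ref{th141} to the Fomin-Zelevinsky mutation via the multiplication formulas of Theorem~\ref{th61}. Fix a complete rigid module $T = T_1 \oplus \cdots \oplus T_d$ in $\Sub Q_J$ with the indecomposable injective summands placed last, and fix a mutable direction $k \le d-n$. Theorem~\ref{th141} produces $T_k^*$ and two non-split short exact sequences with middle terms $X_k, Y_k \in \add(T/T_k)$ sharing no common indecomposable summand, and the exchange matrix column $(b_{ik})$ records their multiplicities with opposite signs. Since $\dim \Ext_\La^1(T_k, T_k^*) = 1$, Theorem~\ref{th61} gives
\[
\varphi_{T_k}\varphi_{T_k^*} = \varphi_{X_k} + \varphi_{Y_k} = \prod_{b_{ik}>0}\varphi_{T_i}^{b_{ik}} + \prod_{b_{ik}<0}\varphi_{T_i}^{-b_{ik}},
\]
which is precisely the Fomin-Zelevinsky exchange (\ref{mutationformula}) for $y_k = \varphi_{T_k}$ relative to $B(T)$. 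Combined with the previously stated identity $B(\mu_k(T)) = \mu_k(B(T))$, an induction on the length of a mutation sequence identifies $(x(U), B(U))$ with the iterated Fomin-Zelevinsky mutation of $(x(T), B(T))$ for every $U \in \R$.

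To complete (i) one then verifies that the components of $x(T)$ are algebraically independent in $\C(N_K)$. Working with the initial rigid module of Section~\ref{sect13}, one identifies the functions $\varphi_{M_p}$ with the generalized minors $\De_{\varpi_{i_p}, u_{\le p}(\varpi_{i_p})}$ that appear as Bott-Samelson factorization parameters of $N_K$ associated with the chosen reduced decomposition; these generalized minors are known to be algebraically independent. Thus $x(T)$ is a valid initial seed, and $\{x(U)\mid U\in\R\}$ is the set of clusters of $\cal A(B(T))$, of rank $d-n$. For (ii), the indecomposable injective objects of $\Sub Q_J$ are the $|J|+|K|=n$ modules $Q_j$ $(j\in J)$ together with $M_{q_k} = \E^\dag_{w_0^K}(Q_k)$ $(k\in K)$; since injective summands are never mutated, the corresponding $\varphi_{L_i}$ appear in every $x(U)$, and by the same minor identification they are algebraically independent, so they generate a polynomial coefficient ring on $n$ variables.

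For (iii), let $U = U_1 \oplus \cdots \oplus U_d \in \R$ and consider a cluster monomial $\prod_i \varphi_{U_i}^{a_i}$. Iterating Theorem~\ref{th61}(1) identifies this product with $\varphi_M$, where $M = \bigoplus_i U_i^{\oplus a_i}$. Biadditivity of $\Ext^1_\La$ and rigidity of $U$ force $\Ext^1_\La(M,M) = 0$, so $M$ is rigid, and the corollary of Section~\ref{sect7} places $\varphi_M$ in the dual semicanonical basis of $\C[N]$; linear independence of the cluster monomials is then immediate, since the dual semicanonical basis is in particular a basis. The main obstacle lies in step two: the representation-theoretic inputs (Theorems~\ref{th141} and~\ref{th61}) deliver the mutation combinatorics essentially for free, but the algebraic independence of the initial cluster and of the frozen variables is not a formal consequence of them and requires an explicit identification of the $\varphi_{M_p}$'s with a known family of independent functions on $N_K$, or an equivalent transcendence-degree argument based on the birational parametrization of $N_K$ by the one-parameter-subgroup coordinates.
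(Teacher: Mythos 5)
Your proof follows the same route the paper gestures at: the paper's entire "proof" is the sentence "The next result follows from Theorem~\ref{th141} and Theorem~\ref{th61}," and you have spelled out how those two inputs combine, which is exactly the intended argument. The mutation-matching step (iterating $\mu_k$ on both the module side and the matrix side), the translation of the exchange relation via Theorem~\ref{th61}(2) and multiplicativity via Theorem~\ref{th61}(1), and the rigidity argument for (iii) are all correct and in the spirit of the source.

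Two small remarks. First, your transcription of the generalized minors drops a $w_0$: the paper's formula in \S\ref{sect15} reads $\varphi_{M_p}=D_{\varpi_{i_p},\,u_{\le p}w_0(\varpi_{i_p})}$, whereas you wrote $\De_{\varpi_{i_p},\,u_{\le p}(\varpi_{i_p})}$; this is inessential for the argument but worth correcting. Second, you flag the algebraic independence of the initial cluster as the "main obstacle," requiring an appeal to external facts about chamber minors or Bott--Samelson coordinates. A cleaner and self-contained route, available once you have established (iii), is to note that if $P(\varphi_{T_1},\dots,\varphi_{T_d})=0$ for a nonzero polynomial $P$, then expanding $P$ as a $\C$-linear combination of monomials gives a nontrivial linear relation among cluster monomials supported on the cluster $x(T)$, contradicting their membership in a basis. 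Since (iii) only uses rigidity of $\bigoplus_i U_i^{\oplus a_i}$ (which is a formal consequence of rigidity of $U$) and Lusztig's semicanonical basis theorem, there is no circularity. This removes the need to identify the $\varphi_{M_p}$ with an independently known transcendence basis, and keeps the argument entirely inside the representation-theoretic framework.
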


The rigid modules $T$ of \S\ref{sect13} project to initial seeds
of the cluster algebra ${\cal A}_J$ that we are going to
describe.

For $i\in I$ and $u,v\in W$, let $\De_{u(\varpi_i),v(\varpi_i)}$
denote the generalized minor introduced by Fomin and 
Zelevinsky \cite[\S 1.4]{FZ}.
This is a regular function on $G$. 
We shall mainly work with the restriction of this 
function to $N$, that we shall denote by $D_{u(\varpi_i),v(\varpi_i)}$.
It is easy to see that $D_{u(\varpi_i),v(\varpi_i)}=0$
unless $u(\varpi_i)$ is less or equal to $v(\varpi_i)$ in the Bruhat 
order, and that $D_{u(\varpi_i),u(\varpi_i)}=1$ for
every $i\in I$ and $u\in W$.
It is also well known that $D_{\varpi_i,w_0(\varpi_i)}$ is a lowest
weight vector of $L(\varpi_i)$ in its realization as a subspace 
of $\C[N]$ explained in \S\ref{sect8}.
Therefore, using Theorem~\ref{th81}, we get 
\[
\varphi_{Q_i}=D_{\varpi_i,w_0(\varpi_i)},\qquad (i\in I).
\]
More generally, it follows from \cite[Lemma 5.4]{GLSLMS}
that for $u, v\in W$ we have
\[
\varphi_{\E^\dag_u\E_v Q_i} = D_{u(\varpi_i),vw_0(\varpi_i)}.
\]
Thus the elements of $\C[N]$ attached to the summands
$M_p$ of the complete rigid module $T$ of
Theorem~\ref{th123} are given by
\begin{align*}
\varphi_{M_p}
&=D_{\varpi_{i_p},u_{\le p}w_0(\varpi_{i_p})},\ 
\qquad (p\in \{r_K+1,\ldots,r\} \cup \{s_k \mid k\in K\}),
\\
\varphi_{Q_j}&=D_{\varpi_j,w_0(\varpi_j)},\qquad \qquad \ \, (j\in J).
\end{align*}
Moreover the matrix $B(T)$ can also be described explicitly
by means of a graph similar to those arising in the 
Chamber Ansatz of Fomin and Zelevinsky (see \cite[\S 9.3]{GLSFour}). 

\begin{example}\label{ex15.2}
{\rm
We continue Example~\ref{ex133}. We have 
\begin{align*}
\varphi_{M_4}&= D_{\varpi_2,s_1s_3s_1s_2w_0(\varpi_2)},
\qquad\qquad\,
\varphi_{M_5} = D_{\varpi_3,s_1s_3s_1s_2s_3w_0(\varpi_3)},
\\
\varphi_{M_6}&= D_{\varpi_1,s_1s_3s_1s_2s_3s_1w_0(\varpi_1)},
\qquad\ \ 
\varphi_{M_7} = D_{\varpi_4,s_1s_3s_1s_2s_3s_1s_4w_0(\varpi_4)},
\\
\varphi_{M_8}&= D_{\varpi_3,s_1s_3s_1s_2s_3s_1s_3w_0(\varpi_3)},
\qquad
\varphi_{Q_4} = D_{\varpi_4,w_0(\varpi_4)}.
\end{align*}
It turns out that in the matrix realization of $N$ given in
Example~\ref{exa42} the generalized minors above can be expressed
as ordinary minors of the unitriangular $8\times 8$ matrix $x\in N$.
Indeed, denoting the matrix entries of $x$ by $n_{ij}(x)$
one can check that
\begin{align*}
\varphi_{M_4}&= n_{14},
\qquad
\varphi_{M_5} = 
\left|
\begin{matrix}n_{17}&n_{18}\\1 & n_{78}
\end{matrix}
\right|,
\qquad
\varphi_{M_6} = n_{15},
\\
\varphi_{M_7}&= n_{12},
\qquad
\varphi_{M_8} = n_{13},
\qquad\qquad\ \ \ \
\varphi_{Q_4} = n_{18}.
\end{align*}
The cluster variables of this seed are $\varphi_{M_7}$
and $\varphi_{M_8}$. 
The exchange relations come from the following exact
sequences 
\[
0\to M_7 \to M_5 \to M_7^*\to 0,\qquad
0\to M_7^* \to Q_4 \to M_7\to 0,
\]
\[
0\to M_8 \to M_4 \oplus M_6 \to M_8^*\to 0,\qquad
0\to M_8^* \to M_5 \to M_8\to 0,
\]
where 
\[
M_7^*=\begin{tabular}{c}
$S_3$\\
\hline
$S_1 \oplus S_2$\\
\hline
$S_3$\\
\hline
$S_4$
\end{tabular}
\qquad
M_8^*=\begin{tabular}{c}
$S_1 \oplus S_2$\\
\hline
$S_3$\\
\hline
$S_4$
\end{tabular}.
\]
The exchange matrix is therefore
\[
B(T)
=
\left[
\begin{matrix}
0&0\\
0&0\\
0&-1\\
-1&1\\
0&-1\\
1&0
\end{matrix}
\right]
\]
where the rows are labelled by $(M_7, M_8, M_4, M_5, M_6, Q_4)$
and the columns by $(M_7, M_8)$.
}
\end{example}

A priori, we only have an inclusion of our cluster algebra 
${\cal A}_J$ in $R_J \cong \C[N_K]$, but we believe that
 \begin{conjecture}\label{conj1}
We have ${\cal A}_J = R_J$.
\end{conjecture}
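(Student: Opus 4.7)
The inclusion $\mathcal{A}_J \subseteq R_J$ is already established, so the task is to prove the reverse inclusion. My plan is to exhibit an explicit algebra generating set of $R_J$ and show that each of its elements lies in $\mathcal{A}_J$, ideally by realizing it as a cluster monomial. By the discussion of Section \ref{sect10}, the ring $\C[B_K^-\backslash G]$ is generated by $\bigoplus_{j\in J} L(\varpi_j)$, and Theorem \ref{th103} together with $\pr_J$ transports this into an algebra generating set of $R_J \cong \C[N_K]$. Via Theorem \ref{th81}, the space $\pr_J(L(\varpi_j))$ is spanned by the functions $\varphi_M$ with $M$ a submodule of $Q_j$. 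Thus it suffices to prove that $\varphi_M \in \mathcal{A}_J$ for every submodule $M$ of some $Q_j$ with $j \in J$.

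The natural route is to realize each such $\varphi_M$ as a cluster monomial. By the identity $\varphi_{\E^\dag_u\E_v Q_i} = D_{u(\varpi_i),vw_0(\varpi_i)}$ recalled in Section \ref{sect15}, this amounts to exhibiting every generalized minor of the form $D_{u(\varpi_j),v(\varpi_j)}$ with $j \in J$ as a cluster variable of $\mathcal{A}_J$. The initial seed attached to the complete rigid module $T$ of Section \ref{sect13} already supplies those minors corresponding to its summands $M_p$. Since the mutation class $\R$ is independent of the choice of reduced decomposition of $w_0^K w_0$, mutating from $T$ while varying this decomposition should sweep out all the required $D_{u(\varpi_j),v(\varpi_j)}$; the Chamber Ansatz–style combinatorics of \cite[\S 9.3]{GLSFour} is the natural tool to make this precise. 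A parallel route is via the upper cluster algebra $\overline{\mathcal{A}}_J$ of Berenstein–Fomin–Zelevinsky: one would first establish $R_J \subseteq \overline{\mathcal{A}}_J$ by producing Laurent expansions of each $\varphi_M$ in every cluster $x(U)$, and then $\mathcal{A}_J = \overline{\mathcal{A}}_J$ via a Starfish-type criterion, using the coprimality of exchange pairs which follows from the exchange sequences of Theorem \ref{th141} and the 2-Calabi–Yau property of the stable category of $\Sub Q_J$.

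The main obstacle is that a general $\varphi_M$ with $M \in \Sub Q_J$ need not itself be a cluster monomial: as already noted in Section \ref{sect7}, the dual semicanonical basis is strictly larger than the set of cluster monomials, the one-parameter family of non-rigid indecomposables in type $D_4$ being the simplest witness. One must therefore show that these additional $\varphi_M$ can be expanded as polynomials in cluster monomials, or equivalently, in the upper-bound approach, that they are Laurent polynomials in every cluster. This demands controlling the multiplication formula of \cite{GLSComp} well beyond the rigid case, and understanding how non-rigid components of the module varieties $\La_\dd$ interact with the seeds indexed by $\R$; this is precisely what makes the conjecture delicate and what prevents us from asserting it as a theorem here.
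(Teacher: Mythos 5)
This statement is labelled a \emph{conjecture} in the paper, and the paper offers no proof of it in general; it is only reported as established in special cases (type $A_n$, type $D_4$, $J=\{n\}$ in type $D_n$, $J=\{1\}$ in type $D_5$, and the adapted-reduced-word cases from \cite{GLSAst}), all by reference to \cite{GLSFour} and \cite{GLSAst}. You correctly identify that the nontrivial direction is $R_J \subseteq {\cal A}_J$, and your reduction is sound: $R_J$ is generated as an algebra by the images $\pr_J(L(\varpi_j))$, $j\in J$, which by Theorem~\ref{th81} are spanned by $\varphi_M$ for $M$ a submodule of $Q_j$, so it suffices to show each such $\varphi_M$ lies in ${\cal A}_J$. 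You are also honest that you cannot close the argument, and the obstacle you name — non-rigid generic modules giving dual semicanonical elements that are not cluster monomials — is genuinely the crux.

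There is, however, a concrete slip in the middle of the proposal. You pass from ``show $\varphi_M \in {\cal A}_J$ for every submodule $M$ of $Q_j$'' to ``this amounts to exhibiting every generalized minor $D_{u(\varpi_j),v(\varpi_j)}$ as a cluster variable,'' invoking the identity $\varphi_{\E^\dag_u\E_v Q_j} = D_{u(\varpi_j),vw_0(\varpi_j)}$. That identity only covers the submodules of $Q_j$ of the special form $\E^\dag_u\E_v Q_j$, whose $\varphi$-functions are the \emph{extremal} weight vectors of $L(\varpi_j)$. Whenever $\varpi_j$ is not minuscule (e.g.\ $\varpi_3$ in type $D_4$, or fundamental weights in $E_6$, $E_7$, $E_8$), $Q_j$ has submodules not of this form, and the corresponding $\varphi_M$ are weight vectors of $L(\varpi_j)$ that are not generalized minors at all. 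So the reduction to ``every generalized minor is a cluster variable'' is strictly weaker than what is needed, and is not equivalent to the statement you started from. (Moreover, even for the extremal ones it is far from automatic that each $D_{u(\varpi_j),v(\varpi_j)}$ is a cluster variable of ${\cal A}_J$.) Your alternative route via the upper cluster algebra and a Starfish-type coprimality argument is a reasonable strategy and close in spirit to how such equalities are proved in the cases cited, but the paper itself does not carry it out, and neither do you — which you acknowledge. In short: your framing of the problem is right, but the central step that would turn this into a proof is missing, exactly as the paper's status of the statement as a conjecture indicates it should be.
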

The conjecture is proved for $G$ of type $A_n$ and of type $D_4$.
It is also proved for
$J=\{n\}$ in type $D_n$, and for $J=\{1\}$
in type $D_5$ (see \cite{GLSFour}). 
Moreover it follows from \cite{GLSAst} that it is also
true whenever $w_0^Kw_0$ has a reduced expression adapted to an 
orientation of the Dynkin diagram.


\section{Cluster algebra structure on $\C[B_K^-\backslash G{]}$}
\label{sect16}

Let us start by some simple remarks.
Consider the affine space $\C^r$ and the
projective space $\Pro(\C^{r+1})$.
The coordinate ring of $\C^r$ is $R=\C[x_1,\ldots,x_r]$, and
the homogeneous coordinate ring of $\Pro(\C^{r+1})$ is
$S=\C[x_1,\ldots,x_{r+1}]$.
Moreover $\C^r$ can be regarded as the open subset
of $\Pro(\C^{r+1})$ given by the non-vanishing of $x_{r+1}$.
Given a hypersurface $\Sigma\subset\C^r$ of equation 
$f(x_1,\ldots,x_r)=0$ for some $f\in R$, its completion
$\widehat{\Sigma}\subset \Pro(\C^{r+1})$ is described by
the equation $\widehat{f}(x_1,\ldots,x_{r+1})=0$, where
$\widehat{f}$ is the homogeneous element of $S$ obtained
by multiplying each monomial of $f$ by an appropriate power
of $x_{r+1}$.

Similarly, consider the open embedding 
$N_K \subset B_K^-\backslash G$ given by restricting 
the natural projection $G \to B_K^-\backslash G$ to $N_K$.
By this embedding, $N_K$ is identified with the subset of
$B_K^-\backslash G$ given by the simultaneous non-vanishing 
of the generalized minors $\De_{\varpi_j,\varpi_j}\ (j\in J)$.
To an element $f\in \C[N_K]$ we can associate a 
$\Pi_J$-homogeneous element
$\widetilde{f}\in \C[B_K^-\backslash G]$ by multiplying each
monomial in $f$ by an appropriate monomial in the 
$\De_{\varpi_j,\varpi_j}$'s. 
More precisely, using the notation of \S\ref{sect10},
$\widetilde{f}$ is the homogeneous element of $\C[B_K^-\backslash G]$
with smallest degree such that $\pr_J(\widetilde{f})=f$.

With this notation, we can now state the following result
of \cite{GLSFour}.
\begin{theorem}
\begin{itemize}
\item[(i)]
$\{\widetilde{x(U)}\mid U\in \R\}$ is the set of clusters
of a cluster algebra $\widetilde{\cal A}_J\subseteq \C[B_K^-\backslash G]$
of rank $d-n$.
\item[(ii)]
The coefficient ring of $\widetilde{\cal A}_J$ is the
ring of polynomials in the $n+|J|$ variables 
$\widetilde{\varphi_{L_i}}\ (i\in I)$
and $\De_{\varpi_j,\varpi_j}\ (j\in J)$.
\item[(iii)]
The exchange matrix $\widetilde{B}$ attached to $\widetilde{x(U)}$
is obtained from the exchange matrix $B$ of $x(U)$ by
adding $|J|$ new rows (in the non-principal part)
labelled by $j\in J$, where the entry in column $k$ and row $j$ is
equal to 
\[
b_{jk}=\dim\Hom_\La(S_j,X_k)-\dim \Hom_\La(S_j,Y_k).
\]
Here, if $U_k$ denotes the $k$th summand of $U$, $U_k^*$ its
mutation, then $X_k, Y_k$ are the middle terms of the non-split
short exact sequences
\[
0 \to U_k \to X_k \to U^*_k \to 0,\qquad
0 \to U_k^* \to Y_k \to U_k \to 0.
\]
\end{itemize}
\end{theorem}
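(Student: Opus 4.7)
The idea is to propagate the cluster algebra $\mathcal{A}_J\subseteq\C[N_K]$ constructed in Section~\ref{sect15} to a cluster algebra $\widetilde{\mathcal{A}_J}\subseteq\C[B_K^-\backslash G]$ via the homogenization map $f\mapsto\widetilde{f}$, adjoining the $|J|$ generalized minors $\De_{\varpi_j,\varpi_j}$ ($j\in J$) as new frozen variables that absorb the degree correction forced by the $\Pi_J$-grading.

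The first step is to pin down the $\Pi_J$-multidegree of $\widetilde{\varphi_M}$ for $M\in\Sub Q_J$. By Theorem~\ref{th81}, $\varphi_M$ lies in the homogeneous component $L(\lambda)\subseteq\C[B_K^-\backslash G]$ exactly when $M\subseteq Q_\lambda$; since $Q_\lambda=\bigoplus_{j\in J}Q_j^{\oplus a_j}$ is the injective envelope of $\bigoplus_{j\in J}S_j^{\oplus a_j}$ and $M$ has essential socle, such an embedding exists if and only if $a_j\ge\dim\Hom_\La(S_j,M)$ for every $j\in J$. The minimal such $\lambda$ therefore yields
\[
\deg_j(\widetilde{\varphi_M})=\dim\Hom_\La(S_j,M),\qquad (j\in J).
\]

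Next I would homogenize each exchange relation. For $k\le d-n$, the identity $\varphi_{U_k}\varphi_{U_k^*}=\varphi_{X_k}+\varphi_{Y_k}$ provided by Theorems~\ref{th141} and~\ref{th61}(2) admits a unique $\Pi_J$-homogeneous lift
\[
\widetilde{\varphi_{U_k}}\widetilde{\varphi_{U_k^*}}
=\widetilde{\varphi_{X_k}}\prod_{j\in J}\De_{\varpi_j,\varpi_j}^{\alpha_j}
+\widetilde{\varphi_{Y_k}}\prod_{j\in J}\De_{\varpi_j,\varpi_j}^{\beta_j}
\]
in $\C[B_K^-\backslash G]$, with exponents
\[
\alpha_j=\dim\Hom(S_j,U_k)+\dim\Hom(S_j,U_k^*)-\dim\Hom(S_j,X_k),
\]
and analogously $\beta_j$ (with $Y_k$ in place of $X_k$), both forced by the degree-matching demand and the first step. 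Applying $\Hom_\La(S_j,-)$ to the two short exact sequences of Theorem~\ref{th141} identifies $\alpha_j,\beta_j$ with the dimensions of the images of the two connecting homomorphisms, hence both are nonnegative. A minimality argument, namely that neither $\widetilde{\varphi_{U_k}}$ nor $\widetilde{\varphi_{U_k^*}}$ can be divisible by any $\De_{\varpi_j,\varpi_j}$ (otherwise the homogenization would not be of minimal degree), forces $\min(\alpha_j,\beta_j)=0$, so the lifted relation is a genuine cluster-algebra exchange whose row-$j$ entry is $\tilde{b}_{jk}=\alpha_j-\beta_j$; substituting the definitions collapses this to the expression in statement (iii).

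Parts (i) and (ii) now follow by assembling these homogenized seeds. Starting from the complete rigid module $T$ of Section~\ref{sect13}, the seed $(\widetilde{x(T)},\widetilde{B}(T))$, with the matrix augmented by the $|J|$ new rows, generates under iterated Fomin-Zelevinsky mutation a subalgebra $\widetilde{\mathcal{A}_J}\subseteq\C[B_K^-\backslash G]$ of rank $d-n$; its coefficient ring is polynomial in the $n$ lifted injectives $\widetilde{\varphi_{L_i}}$ ($i\in I$) together with the $|J|$ generalized minors $\De_{\varpi_j,\varpi_j}$ ($j\in J$). The main technical obstacle is to verify $\widetilde{B}(\mu_k(T))=\mu_k(\widetilde{B}(T))$ for the full augmented matrix, not merely its principal part: this I would prove by induction on the length of a mutation sequence starting from $T$, at each step using the degree-matching argument above to rigidly pin down the shape of the homogenized exchange relation at the new seed. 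Combined with the already established principal-part compatibility $B(\mu_k(T))=\mu_k(B(T))$, this identifies the mutation class of the augmented seed with $\{(\widetilde{x(U)},\widetilde{B}(U))\mid U\in\R\}$ and completes the proof of (i)--(iii).
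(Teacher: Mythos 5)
Your strategy---determine the $\Pi_J$-multidegree of $\widetilde{\varphi_M}$ via socle multiplicities, homogenize the exchange relation $\varphi_{U_k}\varphi_{U_k^*}=\varphi_{X_k}+\varphi_{Y_k}$, and read off the new row of the exchange matrix from the resulting powers of the $\De_{\varpi_j,\varpi_j}$---is exactly the approach of the paper, and the degree formula $\deg_j(\widetilde{\varphi_M})=\dim\Hom_\La(S_j,M)$ is correctly identified as the key tool. However, the final sign does not come out as you claim. With your setup $\alpha_j-\beta_j=\dim\Hom_\La(S_j,Y_k)-\dim\Hom_\La(S_j,X_k)$, which is the \emph{negative} of the expression in (iii). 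Whether this is actually an error depends on the sign convention used to attach $X_k$ to the positive entries of $B$: the formal definition of $B(T)$ given in the mutation section of the paper attaches $X_k$ to positive entries (under which your $\tilde b_{jk}=\alpha_j-\beta_j$ would be the right answer, but would contradict (iii)), whereas the explicit $6\times 2$ matrix $B(T)$ displayed in Example 15.2 uses the opposite convention (positive entries come from $Y_k$), and with that convention one has $\tilde b_{jk}=\beta_j-\alpha_j$, which does give (iii) and matches the row $[1,0]$ computed in the subsequent example. So you should flag this: the paper's two conventions conflict, and your formula is the negative of the stated one; state which of the two conventions you are working with and keep it consistent.

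There is also a small gap in the justification of $\min(\alpha_j,\beta_j)=0$: that neither $\widetilde{\varphi_{U_k}}$ nor $\widetilde{\varphi_{U_k^*}}$ is divisible by $\De_{\varpi_j,\varpi_j}$ does not by itself prevent their \emph{product} from being so divisible. The clean argument is already latent in your degree formula. It gives $\deg_j\bigl(\widetilde{\varphi_{U_k}}\widetilde{\varphi_{U_k^*}}\bigr)=\dim\Hom_\La(S_j,U_k\oplus U_k^*)=\deg_j\bigl(\widetilde{\varphi_{U_k\oplus U_k^*}}\bigr)$, and since $\pr_J$ is injective on each graded piece, $\widetilde{\varphi_{U_k}}\widetilde{\varphi_{U_k^*}}=\widetilde{\varphi_{U_k\oplus U_k^*}}$, which is by definition a \emph{minimal} homogenization. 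On the other hand the minimal homogenization of $\varphi_{X_k}+\varphi_{Y_k}$ has $j$-degree $\max\bigl(\dim\Hom_\La(S_j,X_k),\dim\Hom_\La(S_j,Y_k)\bigr)$, since the two graded pieces cannot cancel. Equating these two degrees forces $\min(\alpha_j,\beta_j)=0$, as required. With this adjustment, your argument is essentially the paper's.
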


\begin{example}
{\rm
We continue Example~\ref{ex133} and Example~\ref{ex15.2}.
So we are in type $D_4$ with $K=\{1,2,3\}$ and $J=\{4\}$.
The cluster $\widetilde{x(T)}$ consists of the $7$ functions
\[
\widetilde{\varphi_{M_4}},\ \widetilde{\varphi_{M_5}},\ 
\widetilde{\varphi_{M_6}},\ \widetilde{\varphi_{M_7}},\ 
\widetilde{\varphi_{M_8}},\ \widetilde{\varphi_{Q_4}},\ 
\Delta_{\varpi_4,\varpi_4}.
\]
The exchange matrix $\widetilde{B(T)}$ is obtained from the
matrix $B(T)$ of Example~\ref{ex15.2} by adding a new row labelled
by the extra coefficient $\Delta_{\varpi_4,\varpi_4}$.
Since
\[
\dim\Hom_\La(S_4,M_5)-\dim \Hom_\La(S_4,Q_4)=2-1=1,
\]
\[
\dim\Hom_\La(S_4,M_4\oplus M_6)-\dim \Hom_\La(S_4,M_5)=2-2=0,
\]
this new row is equal to $[1,0]$, thus 
\[
\widetilde{B(T)}
=
\left[
\begin{matrix}
0&0\\
0&0\\
0&-1\\
-1&1\\
0&-1\\
1&0 \\
1&0
\end{matrix}
\right].
\]
Note that in this example, the variety $B_K^-\backslash G$ is isomorphic
to a smooth quadric in $\Pro(\C^8)$.
Its homogeneous coordinate ring $\C[B_K^-\backslash G]$ coincides with
the affine coordinate ring of the isotropic cone in $\C^8$ of the 
corresponding non-degenerate quadratic form.
Thus we recover an example of \S~\ref{sect2}.
The precise identification is via the following formulas
(see Exercise~\ref{exo55}):
\[
y_1=\De_{\varpi_4,\varpi_4},\ 
y_2=\widetilde{\varphi_{M_7}},\ 
y_3=\widetilde{\varphi_{M_8}},\
y_4=\widetilde{\varphi_{M_4}},\
y_5=\widetilde{\varphi_{M_6}},\
\]
\[
y_6=\widetilde{\varphi_{M_8^*}},\
y_7=\widetilde{\varphi_{M_7^*}},\
y_8=\widetilde{\varphi_{Q_4}},\
p=\widetilde{\varphi_{M_5}}.
\]
Note that since $\C[B_K^-\backslash G]$ is generated by the
$y_i\ (1\le i\le 8)$, which are cluster variables or generators
of the coefficient ring of $\widetilde{\cal A}_J$, we have
in this case that $\widetilde{\cal A}_J=\C[B_K^-\backslash G]$.
} 
\end{example}
When $J=\{j\}$ and $G$ is of type $A$, $B_K^-\backslash G$
is a Grassmannian and the cluster algebra $\widetilde{\cal A}_J$
coincides with the one defined by Scott in \cite{S}.

When $K=\emptyset$, $\C[B_K^-\backslash G] = \C[N^-\backslash G]$
and $J=I$ the cluster algebra $\widetilde{\cal A}_J$ is essentially the
same as the one attached by Berenstein, Fomin and Zelevinsky 
to the big cell of the base affine space $N^-\backslash G$
in \cite[\S 2.6]{BFZ}.
More precisely, both cluster algebras have identical seeds,
but Berenstein, Fomin and Zelevinsky consider an
{\em upper} cluster algebra, and they assume that the 
coefficients 
\[
\widetilde{\varphi_{L_i}}=\widetilde{\varphi_{Q_i}}=\De_{\varpi_i,w_0(\varpi_i)},
\quad
\De_{\varpi_i,\varpi_i},\quad (i\in I),
\] 
are invertible,
{\em i.e.} the ring of coefficients consists of Laurent polynomials. 

Let $\Sigma_J$ be the multiplicative submonoid of $\widetilde{\cal A}_J$
generated by the set
\[
\{\De_{\varpi_j,\varpi_j} \mid j\in J \mbox{ and } \varpi_j
\mbox{ is {\em not} a minuscule weight}\}.
\]

\begin{conjecture}\label{conj2}
The localizations of $\widetilde{\cal A}_J$ and 
$\C[B_K^-\backslash G]$ with respect to $\Sigma_J$ are equal.
\end{conjecture}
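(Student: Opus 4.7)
The strategy is to derive Conjecture~\ref{conj2} from Conjecture~\ref{conj1} via a graded dehomogenization argument, supplemented by a dedicated analysis of the minuscule case.

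After inverting \emph{all} generalized minors $\De_{\varpi_j,\varpi_j}$, $j\in J$, the projection $\pr_J\colon\C[B_K^-\backslash G]\to\C[N_K]$ of Section~\ref{sect10} extends to a $\Pi_J$-graded isomorphism
\[
\C[B_K^-\backslash G]\bigl[\De_{\varpi_j,\varpi_j}^{-1}\mid j\in J\bigr]\;\cong\;\C[N_K]\otimes_{\C}\C\bigl[\De_{\varpi_j,\varpi_j}^{\pm 1}\mid j\in J\bigr],
\]
whose inverse is furnished by the minimal-degree homogenization $f\mapsto\widetilde{f}$ of Section~\ref{sect16}: any $\Pi_J$-homogeneous $F$ of multidegree $\la=\sum_j a_j\varpi_j$ equals $\widetilde{\pr_J(F)}\cdot\prod_j\De_{\varpi_j,\varpi_j}^{b_j}$ for integers $b_j\ge 0$. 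Applying the same dehomogenization to $\widetilde{\cal A}_J$ produces ${\cal A}_J\otimes_\C\C[\De_{\varpi_j,\varpi_j}^{\pm 1}\mid j\in J]$, so assuming Conjecture~\ref{conj1} the two rings coincide after inverting \emph{every} $\De_{\varpi_j,\varpi_j}$, $j\in J$.

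The remaining point is to show that inverting the \emph{minuscule} $\De_{\varpi_j,\varpi_j}$ is superfluous. Since $\C[B_K^-\backslash G]$ is generated as an algebra by $\bigoplus_{j\in J}L(\varpi_j)$, it suffices to prove that for each minuscule $\varpi_j$ with $j\in J$, every weight vector of $L(\varpi_j)\subset\C[B_K^-\backslash G]$ already belongs to $\widetilde{\cal A}_J[\Sigma_J^{-1}]$. When $\varpi_j$ is minuscule, $L(\varpi_j)$ has one-dimensional weight spaces, so by Theorem~\ref{th81} each weight vector is of the form $\widetilde{\varphi_M}$ for a submodule $M\subseteq Q_j$ uniquely determined by its dimension vector. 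One then wants to show that each such $M$ is rigid---by the uniqueness just noted together with the $\Ext$-symmetry of Section~3 and a dimension count on the module variety of $M$---and lies in the mutation class $\R$ of the initial seed $T$ of Section~\ref{sect13}, so that $\widetilde{\varphi_M}$ is a cluster variable of $\widetilde{\cal A}_J$.

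The principal obstacle is the appeal to Conjecture~\ref{conj1}, which is open outside the handful of cases listed after its statement; for those cases, the argument above would yield a complete proof. A secondary, more combinatorial difficulty is the mutation-reachability of the rigid modules $M\subseteq Q_j$ attached to weight vectors of $L(\varpi_j)$ for minuscule $\varpi_j$: in each specific Dynkin type this is a finite check via the Auslander--Reiten quiver of $\Sub Q_J$, but a uniform proof would probably require a general statement about how indecomposable summands of injectives propagate through the mutation class of the initial seed of Theorem~\ref{th123}, combined with Theorem~\ref{th141} to control each mutation step.
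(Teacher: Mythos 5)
This statement is a \emph{conjecture} in the paper, and the authors do not supply a proof; they merely record that it is known in a short list of cases (types $A_n$, $D_4$, and for one-element $J$ in types $D_n$ and $D_5$) and observe that Conjecture~\ref{conj2} implies Conjecture~\ref{conj1}. So there is no ``paper's proof'' against which to measure your attempt; what you have written is a proof \emph{strategy} for the reverse implication ${\rm Conj.\,\ref{conj1}}\Rightarrow{\rm Conj.\,\ref{conj2}}$, and you are honest about the gaps. Let me make those gaps precise.

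Your first step --- assuming Conjecture~\ref{conj1} and inverting \emph{all} generalized minors $\De_{\varpi_j,\varpi_j}$, $j\in J$ --- is sound. Since $\widetilde{\cal A}_J$ is $\Pi_J$-graded, contains the degree-$\varpi_j$ units $\De_{\varpi_j,\varpi_j}$ after localization, and satisfies $\pr_J(\widetilde{\cal A}_J)={\cal A}_J$, the comparison of degree-zero parts reduces to ${\cal A}_J=\C[N_K]$, which is Conjecture~\ref{conj1}. But this already limits your argument to the handful of cases where Conjecture~\ref{conj1} is known, in which cases Conjecture~\ref{conj2} is also already known.

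The reduction to the minuscule case is where the argument genuinely breaks down. Knowing that $\C[B_K^-\backslash G]$ is generated by $\bigoplus_{j\in J}L(\varpi_j)$ tells you it would suffice to put \emph{every} $L(\varpi_j)$, $j\in J$, inside $\widetilde{\cal A}_J[\Sigma_J^{-1}]$ --- not just the minuscule ones. For non-minuscule $j$ the element $\De_{\varpi_j,\varpi_j}$ is inverted, but that does not automatically place $L(\varpi_j)$ in $\widetilde{\cal A}_J[\Sigma_J^{-1}]$: from the all-inverted statement you only obtain $L(\varpi_j)\cdot\prod_i\De_{\varpi_i,\varpi_i}^{b_i}\subseteq\widetilde{\cal A}_J$ where some exponents $b_i$ may be attached to \emph{minuscule} $i$, and you have no way to clear those. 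The same problem afflicts the minuscule case directly: showing $L(\varpi_j)\subseteq\widetilde{\cal A}_J[\Sigma_J^{-1}]$ is a saturation statement ($F\De_{\varpi_j,\varpi_j}\in\widetilde{\cal A}_J[\Sigma_J^{-1}]\Rightarrow F\in\widetilde{\cal A}_J[\Sigma_J^{-1}]$), and containment of the degree-$\varpi_j$ piece in the algebra is necessary but not sufficient for that.

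Finally, the claimed rigidity and mutation-reachability of the submodules $M\subseteq Q_j$ for minuscule $\varpi_j$ is an assertion, not an argument. The uniqueness of $M$ among submodules of $Q_j$ of its dimension vector says that a certain quiver Grassmannian is a point; the tangent space to that Grassmannian at $M$ is $\Hom_\La(M,Q_j/M)$, which is not $\Ext^1_\La(M,M)$, so neither ``uniqueness plus $\Ext$-symmetry'' nor a dimension count on the module variety of $M$ delivers $\Ext^1_\La(M,M)=0$ without further work. Mutation-reachability of all these $M$ is a separate, genuinely open combinatorial question: Theorem~\ref{th141} controls a single mutation, but it does not by itself tell you which indecomposable rigid objects are reachable from the initial seed of Theorem~\ref{th123}. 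The paper sidesteps all of this precisely by leaving the statement as a conjecture.
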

Note that if $J$ is such that all the weights
$\varpi_j\ (j\in J)$ are minuscule, then $\Sigma_J$ is trivial
and the conjecture states that the algebras $\widetilde{\cal A}_J$ and 
$\C[B_K^-\backslash G]$ coincide without localization.
This is in particular the case for every $J$ in type $A_n$.

The conjecture is proved for $G$ of type $A_n$ and of type $D_4$.
It is also proved for
$J=\{n\}$ in type $D_n$, and for $J=\{1\}$
in type $D_5$ (see \cite{GLSFour}). 
Note also that Conjecture~\ref{conj2} implies 
Conjecture~\ref{conj1}.


\section{Finite type classification}


Recall that a cluster algebra is said to be of {\em finite
type} if it has finitely many cluster variables, or
equivalently, finitely many clusters. 
Fomin and Zelevinsky have classified the cluster algebras
of finite type \cite{FZ2}, attaching to them a finite
root system called their {\em cluster type}.

Note that the clusters of ${\cal A}_J$ and  $\widetilde{\cal A}_J$
are in natural one-to-one correspondence, and that the principal
parts of the exchange matrices of two corresponding clusters
are the same. This shows that ${\cal A}_J$ and $\widetilde{\cal A}_J$
have the same cluster type, finite or infinite.

\begin{table}
\begin{center}
\begin{tabular}
{|c|c|c|}
\hline
Type of $G$ & $J$ & Type of ${\cal A}_J$\\
\hline
$A_n$ $(n\ge 2)$& $\{1\}$ & --- \\
$A_n$ $(n\ge 2)$& $\{2\}$ & $A_{n-2}$ \\
$A_n$ $(n\ge 2)$& $\{1,2\}$ & $A_{n-1}$\\
$A_n$ $(n\ge 2)$& $\{1,n\}$ & $(A_1)^{n-1}$\\
$A_n$ $(n\ge 3)$& $\{1,n-1\}$ & $A_{2n-4}$ \\
$A_n$ $(n\ge 3)$& $\{1,2,n\}$ & $A_{2n-3}$ \\
\hline
$A_4$ & $\{2,3\}$ & $D_4$\\
$A_4$ & $\{1,2,3\}$ & $D_5$\\
$A_4$ & $\{1,2,3,4\}$ & $D_6$\\
\hline
$A_5$ & $\{3\}$ & $D_4$ \\
$A_5$ & $\{1,3\}$ & $E_6$\\
$A_5$ & $\{2,3\}$ & $E_6$\\
$A_5$ & $\{1,2,3\}$ & $E_7$\\
\hline
$A_6$ & $\{3\}$ & $E_6$ \\
$A_6$ & $\{2,3\}$ & $E_8$\\
\hline
$A_7$ & $\{3\}$ & $E_8$ \\
\hline
$D_n$ $(n\ge 4)$ & $\{n\}$ & $(A_1)^{n-2}$\\
\hline
$D_4$ & $\{1,2\}$ & $A_5$\\
\hline
$D_5$ & $\{1\}$ & $A_5$\\
\hline
\end{tabular}
\end{center}
\caption{\small \it Algebras ${\cal A}_J$ of finite cluster type.
\label{tab1}}
\end{table} 

Using the explicit initial seed described in \S\ref{sect15}
it is possible to give a complete list of the
algebras ${\cal A}_J$ which have a finite cluster type \cite{GLSFour}.
The results are summarized in Table~\ref{tab1}.
Here, we label the vertices of the Dynkin diagram of type
$D_n$ as follows:
\input epsf.tex
\begin{center}
\leavevmode
\epsfxsize =6cm
\epsffile{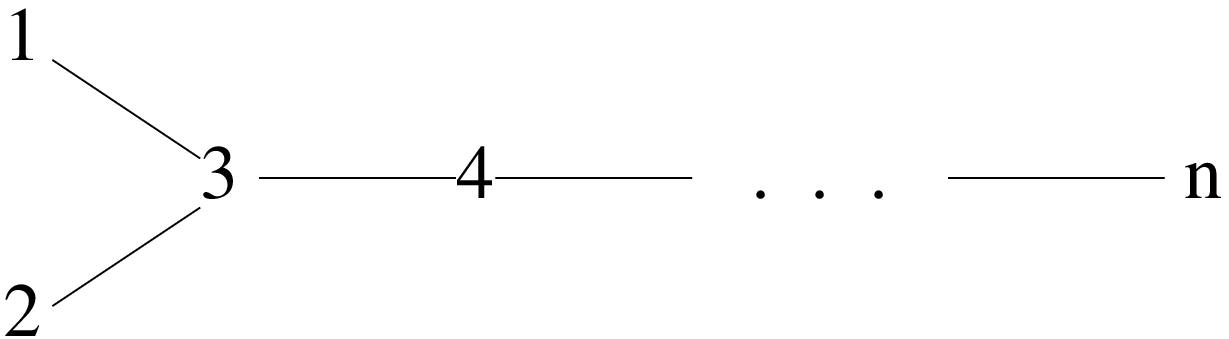}
\end{center}
We have only listed one representative of each orbit under a
diagram automorphism. For example, in type $A_n$
we have an order $2$ diagram automorphism mapping
$J=\{1,2\}$ to $J'=\{n-1,n\}$. Clearly,
${\cal A}_{J'}$ has the same cluster type as $J$,
namely $A_{n-1}$. 

The classification when $J=I$ (that is, in the case of
$\C[N]$ or $\C[B^-\backslash G]$) was given by Berenstein,
Fomin and Zelevinsky \cite{BFZ}. The only finite type
cases are $A_n\ (n\le 4)$.
The classification when $J=\{j\}$ is a singleton 
and $G$ is of type $A_n$ (the Grassmannian $\Gr(j,n+1)$)
was given by Scott \cite{S}.
When $J=\{1\}$ (the projective space $\Pro(\C^{n+1})$),
the cluster algebra is trivial, since every indecomposable
object of $\Sub Q_1$ is a relative projective.

Note that if $\Sub Q_J$ has finitely many isomorphism classes
of indecomposable objects then by construction ${\cal A}_J$
has finite cluster type. The converse is also true although
not so obvious. Indeed, if ${\cal A}_J$ has finite cluster type,
then by using the classification theorem of Fomin and Zelevinsky
\cite{FZ2} there exists a complete rigid object of $\Sub Q_J$ whose endomorphism
ring has a Gabriel quiver with stable part of Dynkin type.
Using a theorem of Keller and Reiten \cite{KR}, it follows
that the stable category $\underline{\Sub} Q_J$ is triangle
equivalent to a cluster category of Dynkin type, hence 
$\Sub Q_J$ has finitely many indecomposable objects.
Therefore the above classification is also the classification
of all subcategories $\Sub Q_J$ with finitely many indecomposable
objects.

\section{Canonical bases, total positivity and open pro\-blems}

\label{sect18}

Since we started this survey with a discussion of total positivity 
and canonical bases, it is natural to ask if the previous
constructions give a better understanding of these topics.

So let $\BB$ and $\Ss$ denote respectively the dual canonical
and dual semicanonical bases of $\C[N]$. 
We have seen (see \S\ref{sect7}) that for every rigid 
$\Lambda$-module $M$, the function $\varphi_M$ belongs to $\Ss$.

\begin{conjecture}\label{conj3}
For every rigid 
$\Lambda$-module $M$, the function $\varphi_M$ belongs to $\BB$.
\end{conjecture}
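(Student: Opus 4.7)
The plan is to deduce Conjecture~\ref{conj3} from the special case where $\varphi_M$ is a cluster monomial of the cluster algebra $\mathcal{A}_I$ of \S\ref{sect15} (taking $J=I$, so that $\Sub Q_I=\md\La$, $K=\emptyset$ and $N_K=N$), and then to attack this special case through a quantum deformation argument.

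First I would reduce to producing $\varphi_M$ as a cluster monomial of $\mathcal{A}_I$. Writing the decomposition $M=\bigoplus_i M_i^{\oplus a_i}$ into pairwise non-isomorphic indecomposable summands, the rigidity of $M$ forces each $M_i$ to be rigid and the $M_i$ to be pairwise $\Ext^1$-orthogonal, and Theorem~\ref{th61}(1) gives
\[
\varphi_M = \prod_i \varphi_{M_i}^{\,a_i}.
\]
By the mutation theory of basic rigid $\La$-modules developed in \cite{GLSInv}, any such basic rigid $\bigoplus_i M_i$ can be completed to a basic complete rigid $T\in\md\La$, and then $x(T)$ is a cluster of $\mathcal{A}_I$ (\S\ref{sect15}) containing all the $\varphi_{M_i}$. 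Hence $\varphi_M$ is a cluster monomial of $\mathcal{A}_I$, and Conjecture~\ref{conj3} is reduced to: every cluster monomial of $\mathcal{A}_I$ lies in $\BB$.

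Second, to attack this reduced statement I would lift the picture to the quantum coordinate ring $\C_q[N]$, equipped with the Kashiwara--Lusztig dual canonical basis $\BB^q$ (which specializes to $\BB$ at $q=1$). The strategy is to build a quantum cluster algebra $\mathcal{A}_I^q\subset\C_q[N]$ in the sense of Berenstein--Zelevinsky whose initial quantum seed is a bar-invariant $q$-deformation of $x(T)$, and to establish a quantum analogue of the multiplication formula Theorem~\ref{th61}(2) reproducing the quantum exchange relations in terms of the mutation of complete rigid modules of Theorem~\ref{th141}. Induction on the number of mutations from the initial seed, combined with Kashiwara's characterization of $\BB^q$ by bar-invariance and an upper-triangularity property relative to a PBW basis, would show that every quantum cluster variable belongs to $\BB^q$; since elements of a single quantum cluster $q$-commute, the same conclusion would follow for quantum cluster monomials. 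Specialization at $q=1$ then gives $\varphi_M\in\BB$.

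The main obstacle is the second step: producing the quantum deformation $\mathcal{A}_I^q$ inside $\C_q[N]$, proving a quantum version of Theorem~\ref{th61}(2), and matching the resulting quantum cluster variables with elements of $\BB^q$ are all hard problems, essentially as hard as the general Fomin--Zelevinsky conjecture that cluster monomials lie in the dual canonical basis. This is why we state Conjecture~\ref{conj3} only as a conjecture: it is presently known only in situations of small rank where $\BB$ and $\Ss$ can be compared directly (see \cite{GLSAnnENS}), and the general case appears to require substantial new input from the quantum side.
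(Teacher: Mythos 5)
This statement is precisely Conjecture~\ref{conj3} of the survey; the paper leaves it open, the only verified cases being type $A_n$ with $n\le 4$, where $\BB=\Ss$ is checked directly in \cite{GLSAnnENS}. There is therefore no proof in the paper to compare your proposal against, and you are right to identify your second step as essentially equivalent to the Fomin--Zelevinsky conjecture that cluster monomials lie in the dual canonical basis, which is open in this generality. So the proposal is, as you say, a strategy sketch and not a proof.

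But your first reduction also has a gap that should be made explicit. To conclude that $\varphi_M$ is a cluster monomial of $\mathcal{A}_I$, it is not enough to complete the basic rigid module $\bigoplus_i M_i$ to a basic \emph{complete} rigid module $T$ in $\md\La$ (that completion step is fine, since by \cite{GLSInv} maximal rigid $\La$-modules are complete rigid). You also need $T$ to lie in the distinguished mutation class $\R$ of \S\ref{sect15}: the clusters of $\mathcal{A}_I$ are \emph{by definition} the tuples $x(U)$ with $U\in\R$, and $\R$ is the set of complete rigid modules reachable by iterated mutation from the initial modules of \S\ref{sect13}. Whether every complete rigid $\La$-module belongs to $\R$ is the reachability question, which is not established here; the survey only records that $\R$ contains the initial seeds attached to all reduced words of $w_0$ and is therefore independent of the starting point. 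Consequently, even granting the quantum program of your second step in full, your first step reduces Conjecture~\ref{conj3} not to the Fomin--Zelevinsky conjecture alone but to that conjecture together with reachability of $T$ from the initial seeds. This is worth flagging, because it explains why the paper proves the known cases by a direct comparison of $\BB$ and $\Ss$ in finite representation type rather than by the reduction you propose.
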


The conjecture holds in type $A_n\ (n\le 4)$ \cite{GLSAnnENS}, that
is, when $\Lambda$ has finite representation type. In this case
one even has $\BB = \Ss$.

As explained in \S\ref{sect8}, each finite-dimensional irreducible
$G$-module $L(\lambda)$ has a canonical embedding in $\C[N]$.
It is known that the subsets 
\[
\BB(\lambda) = \BB \cap L(\lambda),
\qquad
\Ss(\lambda) = \Ss \cap L(\lambda),
\]
are bases of $L(\lambda)$. Using the multiplicity-free decomposition (see \S\ref{sect10})
\[
\C[B_K^-\backslash G] = \bigoplus_{\la\in\Pi_J} L(\la),
\]
we therefore obtain a dual canonical and a dual semicanonical basis
of $\C[B_K^-\backslash G]$:
\[
\BB_J=\cup_{\la\in\Pi_J} \BB(\lambda),
\qquad
\Ss_J=\cup_{\la\in\Pi_J} \Ss(\lambda).
\]
It follows from our construction that all the cluster monomials of
the cluster algebra 
$\widetilde{\cal A}_J \subset\C[B_K^-\backslash G]$
belong to $\Ss_J$. 
Conjecture~\ref{conj3} would imply that they also belong to
$\BB_J$.
In particular, when $\widetilde{\cal A}_J$ has finite cluster type,
$\BB_J$ should be equal to the set of cluster monomials.

Regarding total positivity, we propose the following conjecture,
inspired by Fomin and Zelevinsky's approach to total positivity
via cluster algebras.
Let $X$ denote the partial flag variety
$B_K^-\backslash G$ and let $X_{>0}$ be the totally
positive part of $X$ \cite{L3}.
Lusztig has shown that it can be defined by $\dim L(\lambda)$ 
algebraic inequalities given by the elements of $\BB(\lambda)$ 
for a ``sufficiently large''
$\lambda \in \Pi_J$ \cite[Th. 3.4]{L3}.
In fact $X_{>0}\subset N_K$, where $N_K$ is embedded in $X$
as in \S\ref{sect16}. 
We propose the following alternative descriptions of
$X_{>0}$ by systems of $d=\dim X$ algebraic inequalities.
  
\begin{conjecture}\label{conj4}
Let $T=T_1\oplus \cdots \oplus T_d$ be a basic complete rigid  
$\Lambda$-module in $\Sub Q_J$.
Then $x\in N_K$ belongs to $X_{>0}$ if and only if 
\[
\varphi_{T_i}(x) > 0, \qquad (i=1,\ldots,d).
\] 
\end{conjecture}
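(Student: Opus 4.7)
The plan is to deduce the conjecture from Lusztig's characterization of $X_{>0}$ by positivity of the dual canonical basis, combined with the cluster algebra structure on $\widetilde{\cal A}_J \subseteq \C[B_K^-\backslash G]$ and the Chamber Ansatz of Berenstein--Fomin--Zelevinsky.

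For the necessity direction, assume $x\in X_{>0}$. By Lusztig's Theorem~3.4 of \cite{L3}, every element of $\BB_J$ takes positive values on $X_{>0}$. Granting Conjecture~\ref{conj3}, each $\varphi_{T_i}$ lies in $\BB_J$ and positivity is immediate. One can also bypass Conjecture~\ref{conj3} by a direct mutation argument: for a standard complete rigid module $T$ as in Section~\ref{sect13} the summands $\varphi_{M_p}$ are the generalized minors $D_{\varpi_{i_p},u_{\le p}w_0(\varpi_{i_p})}$, which are positive on $X_{>0}$ by the classical theory; the exchange relation $\varphi_{T_k}\varphi_{T_k^*} = \varphi_{X_k}+\varphi_{Y_k}$ of Theorem~\ref{th61}(2) then propagates positivity through any sequence of mutations, and every basic complete rigid $T$ in $\Sub Q_J$ is mutation-equivalent to a standard one.

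For the sufficiency direction, assume $\varphi_{T_i}(x) > 0$ for $i=1,\ldots,d$. I would first reduce to the case of a standard $T$ and then invoke the BFZ Chamber Ansatz. For the reduction, if $T'=\mu_k(T)$ then the exchange relation yields
\[
\varphi_{T_k^*}(x) = \frac{\varphi_{X_k}(x)+\varphi_{Y_k}(x)}{\varphi_{T_k}(x)},
\]
where $\varphi_{X_k}$ and $\varphi_{Y_k}$ are monomials in $\{\varphi_{T_i}\}_{i\ne k}$; strict positivity on the cluster attached to $T$ therefore forces it on the cluster attached to $T'$, and symmetrically. Since any basic complete rigid module in $\Sub Q_J$ belongs to the common mutation class $\R$, one may assume $T$ is standard. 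For such $T$ the $\varphi_{M_p}$ are generalized minors indexed by the chambers of the reduced decomposition of $w_0^Kw_0$ implicit in $s_{i_1}\cdots s_{i_r}$; the BFZ parametrisation of the totally positive part of the relevant double Bruhat cell, descended from $G$ to $N_K \subset X = B_K^-\backslash G$ through the projection $\pr_J$, identifies the locus where all these minors are positive with $X_{>0}$.

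The main obstacle I foresee is this last step: transporting the Chamber Ansatz from the full base affine space $N^-\backslash G$ treated in \cite{BFZ} to the partial flag variety $X = B_K^-\backslash G$. The case $K=\emptyset$ is covered by \emph{loc.~cit.}; the general case should follow by combining the BFZ parametrisation with $\pr_J$ and with the matching between the seeds of Section~\ref{sect15} and the coordinate systems of \cite{BFZ}, but the bookkeeping is delicate. A secondary difficulty is that a fully rigorous proof appears to rely on Conjectures~\ref{conj1}--\ref{conj3}, or, in the purely cluster-theoretic reformulation, on positivity of the Laurent expansions of elements of $\BB_J$ in the cluster $\{\varphi_{T_i}\}$, which reduces to the Fomin--Zelevinsky positivity conjecture.
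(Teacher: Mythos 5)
The statement you address is Conjecture~\ref{conj4} of the paper, and the paper does \emph{not} prove it: it is stated as an open conjecture, with a verification in the worked example in type~$D_4$ with $J=\{4\}$, and with the remark that it holds when $K=\emptyset$ (base affine space, by Berenstein--Fomin--Zelevinsky) and when $X$ is a type~$A$ Grassmannian (by Scott), \emph{in both cases only for rigid modules belonging to the mutation class~$\R$}. So there is no paper proof to compare against; your writeup is an outline of a strategy rather than a proof, which you partly acknowledge.

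Within that outline, two points deserve to be flagged more sharply. First, you assume several times that ``every basic complete rigid $T$ in $\Sub Q_J$ is mutation-equivalent to a standard one,'' i.e.\ that every basic complete rigid module of $\Sub Q_J$ lies in $\R$. This reachability statement is \emph{not} established in the paper and is a genuinely open problem; Theorem~\ref{th141} shows that each $T\in\R$ can be mutated in any non-projective direction, but it does not show that the resulting exchange graph exhausts all complete rigid modules. This is exactly why the paper restricts its positive cases to ``rigid module in $\R$''. Without reachability, your propagation-by-exchange-relations argument only proves the conjecture for $T\in\R$. Second, as you do note, the sufficiency direction rests on descending the Chamber Ansatz from $N^-\backslash G$ to $B_K^-\backslash G$ through $\pr_J$; this is the hard analytic step, and the paper offers no general mechanism for it. Your necessity argument via $\varphi_{T_k}\varphi_{T_k^*}=\varphi_{X_k}+\varphi_{Y_k}$ and monomial positivity is sound and does not in fact require Conjecture~\ref{conj3}, provided one already knows positivity of the standard seed's generalized minors on $X_{>0}$ and stays inside~$\R$; that part of your argument is a clean reformulation of what makes the known cases work.
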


\begin{example}
{\rm
We consider again type $D_4$ and $J=\{4\}$, so that $X$ can
be identified with the Grassmannian of isotropic lines in
$\C^8$, as in $\S\ref{sect2}$.
In this case $\Pi_J = \N\varpi_4$ and Lusztig's description involves
$\dim L(\lambda)$ inequalities where $\lambda = k\varpi_4$
with $k\ge 5$. For example $\dim L(5\varpi_4)=672$.

On the other hand the category $\Sub Q_4$ has $4$ basic complete
rigid modules:
\[
M_4\oplus M_5\oplus M_6\oplus M_7\oplus M_8\oplus Q_4,\quad 
M_4\oplus M_5\oplus M_6\oplus M_7^*\oplus M_8\oplus Q_4,
\]
\[
M_4\oplus M_5\oplus M_6\oplus M_7\oplus M_8^*\oplus Q_4,\quad 
M_4\oplus M_5\oplus M_6\oplus M_7^*\oplus M_8^*\oplus Q_4,
\]
where we have used the notation of Examples~\ref{ex133}
and \ref{ex15.2}.
Each of them gives rise to a positivity criterion consisting
of $\dim X = 6$ inequalities. 
Using the notation of \S\ref{sect2}, these are respectively
\begin{align*}
y_4&> 0,\ p > 0,\ y_5 > 0,\ y_2 > 0,\ y_3 > 0,\ y_8 > 0;
\\
y_4&> 0,\ p > 0,\ y_5 > 0,\ y_7 > 0,\ y_3 > 0,\ y_8 > 0;
\\
y_4&> 0,\ p > 0,\ y_5 > 0,\ y_2 > 0,\ y_6 > 0,\ y_8 > 0;
\\
y_4&> 0,\ p > 0,\ y_5 > 0,\ y_7 > 0,\ y_6 > 0,\ y_8 > 0.
\end{align*}
Note that since we regard $X_{>0}$ as a subset of $N_K$,
the additional relation $y_1=1$ is understood.
Thus the conjecture holds in this case, and more generally
in type $D_n$ when $J=\{n\}$.
}
\end{example}
When $X=B^-\backslash G$ and $M$ is a rigid module in $\R$, the conjecture
follows from the work of Berenstein, Fomin and Zelevinsky and
our construction.
When $X$ is a type $A$ Grassmannian and $M$ is a rigid module in $\R$, 
the conjecture follows from the work of Scott \cite{S} and 
our construction.

\frenchspacing

\end{document}